\documentclass[11pt,reqno]{amsart}
\usepackage{amsmath, amsthm, amsfonts, amssymb}
\usepackage{graphicx, amscd, color,bm}    
\usepackage[colorlinks]{hyperref}
\usepackage[T2A]{fontenc}
\usepackage[utf8]{inputenc}
\usepackage[mathscr]{eucal}

\newtheorem{theorem}{Theorem}[section]

\newtheorem{corollary}[theorem]{Corollary}

\newtheorem{definition}[theorem]{Definition}

\newtheorem{lemma}[theorem]{Lemma}

\newtheorem{proposition}[theorem]{Proposition}

\newcommand{\be}{{\bm e}}
\newcommand{\Ord}{\mathop{\rm Ord}\nolimits}
\newcommand{\bmb}{{\bm \beta}}
\newcommand{\bi}{{\bm i}}
\newcommand{\bj}{{\bm j}}
\newcommand{\IN}{{\subseteq}}

\newcommand{\ga}{{\gamma}}
\newcommand{\Ga}{{\Gamma}}
\newcommand{\al}{{\alpha}}
\newcommand{\hga}{{\hat\gamma}}
\newcommand{\Nfi}{{N_\phi}}
\newcommand{\eS}{{\mathcal S}}
\begin{document}
\title{Self-similar dendrites with finite boundary and $P$-sprouts}
\thanks{The study was carried out under the state contract of the Sobolev Institute of Mathematics (project FWNF-2026-0026).}
\date{}
\author[Andrei Tetenov, Ivan Yudin, Dmitrii Drozdov]{Andrei Tetenov, Ivan Yudin, Dmitrii Drozdov}
\begin{abstract}
Each self-similar dendrite $K$ with a finite self-similar boundary defines a finite acyclic edge-labeled bipartite graph $\Gamma$,  called the sprout of $K$. The paper shows that the sprout  $\Gamma$   determines the combinatorial properties of the dendrite $K$ and its topological structure.\\

\end{abstract}
\maketitle
\textbf{Mathematics Subject Classification.} Primary: 28A80, 54F50; Secondary: 05C20.\\
\textbf{Keywords and phrases:} self-similar set,  dendrite,   intersection graph,  single intersection property, intersection graph, ramification points, acyclic fractal set.

\section{Introduction}
Let $\eS=\{S_1,...,S_m\} $ be a system  of contractive injections of a complete metric space $X$.
A  compact non-empty set $K$ is called the attractor of a system $\mathcal{S}$ if it satisfies the equation
$K=S_1(K)\cup...\cup S_m(K)$.   The existence and uniqueness of the attractor $K$ was proved by J.Hutchinson \cite{Hut}. 
The subsets $K_{i_1...i_n}= S_{i_1}...S_{i_n}(K)$ are called  copies of $K$. 
The set of those points $x\in K$, whose images are the intersection points of different copies of $K$, is the self-similar boundary of $K$. 
If the attractor $K$ is connected and does not contain a closed curve, it  is called a self-similar dendrite. 
The main object of this paper is self-similar dendrites with a finite boundary. 
These dendrites possess the single intersection property (SIP) and therefore have a clear combinatorial characterization.\\

The study of dendrites dates back to papers \cite{Maz, Men, Mil, W} that appeared in the early 1920-s and defined key objectives in dendrite topology. The main work of that period is the doctoral dissertation \cite{W} of Tadeusz Vazhevsky. He gave a rigorous systematic description of the topology of dendrites and proved the existence of a unique locally connected continuum with the property of universality. In such a space, the ramification points have a strictly defined order, and any other dendrite with a similar or lesser ramification order can be homeomorphically embedded in it. Karl Menger was one of the main researchers of the theory of curves and dendrites in the 1920-s. In \cite{Men} he developed a general theory of curves. He also gave a classification of dendrite points and investigated their properties in the context of dimension theory. S.Mazurkevich laid the foundations for the study of locally connected continua in \cite{Maz}. In subsequent articles  he investigated the conditions under which a continuum is a dendrite and dealt with the issues of their embeddability. K. Zarankiewicz formulated the topological question of whether dendrites are "non-contractible", that is, not homeomorphic to their own subset. In 1932 \cite{Mil}  Miller refuted this hypothesis by constructing a counterexample known as the Miller dendrite, which is homeomorphic for its part.
An exhaustive overview of the research in this area  can be found in \cite{Char}. 
The inception of the theory of self-similar sets \cite{Hut} initiated the study of self-similar dendrites. In  1985, M. Hata  \cite{Hata85} proved that the set of endpoints of a non-trivial self-similar dendrite is infinite. In 1995, J. Kigami \cite{Kig,Kig95} investigated the shortest-path metric in post-critically finite self-similar dendrites and constructed regular Dirichlet forms for such dendrites. Under certain conditions, Julia sets of complex polynomials may be dendrites. 
Hubbard and Douady \cite{DH1984} described this case in detail. Such Julia sets arise if the critical point is strictly preperiodic. Hubbard trees were introduced to study their structure. This is a finite tree that connects all the points of the critical point orbit inside the Julia set. It contains all the combinatorial information about the Julia set, allowing the classification of polynomials.
As  follows from the results of A.Kameyama \cite{Kam}, if a self-similar dendrite is the attractor of some invertible system of contractions, it is homeomorphic to a Julia set of some polynomial.
 The article \cite{SSS2} by C. Bandt and K. Keller deserves special attention, as it contains an intersection graph criterion for a  self-similar continuum with single intersection property to be a dendrite (Proposition 9). The preprint \cite{SSS6} by C. Bandt and J. Stanke introduces the idea of a 'main tree' for post-critically finite self-similar dendrites.
The topology automaton approach to the study of self-similar dendrites was considered in \cite{Bandt2024, RZ2020, RWYZ2023}.
Self-similar dendrites with non-trivial intersection patterns and infinite ramification were studied in \cite{AST2024, AK2025}.
The growth of self-similar dendrites was considered in \cite{barua2022}.

A simple and intuitive way to construct self-similar dendrites in the plane and in space is provided by polygonal (respectively, polyhedral) systems.
This method was described in \cite{STV} and expanded in \cite{Adam}.

In this paper, we develop a method for defining self-similar dendrites with single point intersections using simple combinatorial schemes called sprouts. These schemes determine self-similar dendrites up to isomorphism. We provide an algorithm that enables the reconstruction of the main topological elements of a self-similar dendrite from its sprout.

The main theorems proved in the paper are as follows.

The isomorphism theorem for self-similar dendrites (Theorem \ref{isom}) states that  if systems ${\eS}=\{S_1,...,S_m\}$ and $\tilde {\eS}=\{\tilde S_1,...,\tilde S_m\}$ have isomorphic $P$-sprouts $\Gamma$ and $\tilde \Gamma$, then their attractors $K$ and $ \tilde K$ are isomorphic. 

Theorem \ref{inf_ram} states the conditions under which  the  set of addresses of a boundary point $p\in \partial K$ is finite, countable, or uncountable and gives a formula for finding  ${\rm Ord}(p,K)$.

The third is  Theorem \ref{ord_b}   on the order of boundary points with respect to the main tree. \\ 

In Section 3 we introduce a $P$-sprout $\Gamma$ that defines a self-similar dendrite $K$ with a finite self-similar boundary and  define the index diagram of $\Gamma$.

In Section 4 we consider   SIP systems of continua and refinement sequences of SIP systems, which serve as a tool to prove  Theorem \ref{isom}).

In Section 5 we consider the main tree of a self-similar dendrite, evaluate the orders of the points of the attractor, and determine the cardinality of the set of addresses of  boundary points. The main statements of this section are Proposition \ref{numwalks} and Theorem \ref{inf_ram}.

In section 6 we consider a semigroup $G_\phi $ of  transformations of the set $P$, and prove  Theorem \ref{ord_b} on the orders of boundary points and ramification points of the main tree. Theorem \ref{radr}
gives a method of finding the addresses of the ramification points of the main tree.

\section{Preliminaries}

{\bf Self-similar sets.} Let ${\eS}=\{S_1,  S_2, \ldots,  S_m\}$ be a system of injective contractions in a complete metric space $(X,  d)$. 
There is a unique compact non-empty set  $K{\subset} X$ such that $K =\bigcup\limits_{i = 1}^m S_i (K)$. The set $K=K({\eS})$  is called the attractor of the system  ${\eS}$ \cite{Hut}. The subsets $K_i=S_i(K), i\in\{1,...,m\}$, are called {\em copies} of the attractor $K$.

The set $I=\{1, 2, \ldots, m\}$ is called the {\em set of indices},  the set $I^*=\bigcup\limits_{n=1}^\infty I^n$ 
is the set of  {\em multiindices} ${\bf {j}}=j_1j_2\ldots j_n$. For a multiindex
${\bf {j}}$, the map $S_{\bf {j}}=S_{j_1}\cdot S_{j_2}\cdot\ldots\cdot S_{j_n}$ defines a copy $K_{\bf {j}}=S_{\bf {j}}(K)$ of order $n$. The set $G_{\eS}=\{S_{\bf {i}},{\bf {i}}\in I^*\}$ is a semigroup, generated by the system ${\eS}$.\\

Since any sequence 
${\bf\alpha}=\alpha_1\alpha_2\ldots\alpha_n\ldots\in I^\infty$ specifies a unique point 
$\pi({\bf\alpha})=\bigcap\limits_{n=1}^\infty K_{\alpha_1\ldots\alpha_n}$,
we define a map
$\pi:I^{\infty}\rightarrow K$  called the {\em index map}.  If $\pi({\bf\alpha})=x$,  then ${\bf\alpha}$ is called {\em an address} of the point $x$.  
 We say that an address ${\bf\alpha}$ is preperiodic if ${\bf\alpha}={\bf {j}}\bar{\bf {i}}$ for some ${\bf {i}},{\bf {j}}\in I$.

Given an address ${\bf\alpha}=\alpha_1\alpha_2\ldots\alpha_n\ldots$ of a point $y\in K$ we consider a sequence of points $y_k=\pi(\alpha_{k+1}\alpha_{k+2}\ldots)= S_{\alpha_1\alpha_2\ldots\alpha_k}^{-1}(y)$ which are called {\em the predecessors} of the point $y$.
In other words, a point $y_k \in K$ is a {\em $k$-th predecessor} of the point $y \in K$ if there is a word ${\bf {i}} \in I^k$ such that $y = S_{{\bf {i}}}(y_k)$.  \\

The union ${\mathcal C}=\bigcup\limits_{i\neq j} K_i\cap K_j$ is called { \em the critical set} of ${\eS}$. 
The set  of all predecessors of points of ${\mathcal C}$, i.e. the set of all  $x\in K$ such that for some ${\bf {i}}=i_1\ldots i_n$, $S_{{\bf {i}}}(x)\in {\mathcal C}$, is called {\em the self-similar boundary} ${\partial} K$ of the attractor $K({\eS})$ \cite{Mor}.
For any $n\in \mathbb{N}$ we define ${\mathcal C}^{(n)} = \bigcup\limits_{|\bf{i}| = n-1} S_{\bf{i}} ({\mathcal C})$.

Two multiindices  ${\bf {i}},{\bf {j}}\in I^*$ are incomparable, if neither of them is an initial subword of the other.

For any incomparable multiindices ${\bf {i}},{\bf {j}}\in I^*$, $K_{\bf {i}}\cap K_{\bf {j}}=S_{\bf {i}}({\partial} K)\cap S_{\bf {j}}({\partial} K)$. \\
The self-similar set $K({\eS})$ {\em has a finite self-similar boundary} if the set ${\partial} K$ is finite. 
The set $K({\eS})$ has the {\em finite intersection property} (FIP) if all the sets $K_i\cap K_j$ are finite and it has the {\em single intersection property}   (SIP) if for any $i,j\in I$, $\#(K_i\cap K_j)\le 1$.\\

{\bf Dendrites and sprouts.} We call the attractor $K$  a {\em self-similar continuum} if it is connected. 
By Hata's Theorem \cite{Hata85}, any self-similar continuum is locally connected.\\
A {\em  dendrite}  is a locally connected continuum $K$ that does not contain a simple closed curve \cite{Kur}. 
Therefore, each acyclic self-similar continuum is a self-similar dendrite.\\

  \begin{definition}\label{minarc}
      
Let $D$ be a dendrite, $A, B$ be connected disjoint subsets of $D$. Then there is a single arc $\gamma(x,y)$ connecting the points $x\in A$ and $y\in B$ such that $\gamma \cap A =\{x\}, \gamma\cap B=\{y\}$. We will call such an arc $\gamma$ the minimal arc connecting $A$ and $B$.
\end{definition}

If $K$ is a   self-similar dendrite, then  each non-empty intersection   $P_{ij} =S_i(K)\cap S_j(K)$, where $ i \neq j$, is a singleton $\{p\}$. 
Thus,  $K$ has the single intersection property \cite{TYK}. 
Consequently, the set $\bigcup\limits_{i\in I} S_i(\partial K)$ subdivides $K$ into  pieces whose closures are  copies $K_i$.\\

We consider the following two sets: 
 The set $W = \{K_i\}_{i \in I}$ of copies of $K$ and   the set $B = {\mathcal C} \cup \partial K$ of intersection points of copies of $K$ and of boundary points of $K$.\\

\begin{figure}[htp] 
\centering
\includegraphics[width=\textwidth]{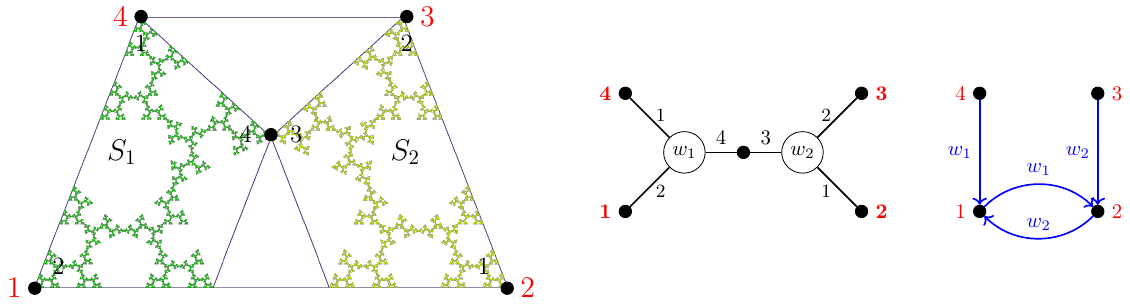}
{\Large$4 \mathop{\longrightarrow }\limits^{w_1}\ 1  \mathop{\longrightarrow }\limits^{w_1}\ 2  \mathop{\longrightarrow }\limits^{w_2}\ 1 \mathop{\longrightarrow }\limits^{w_1} 2\mathop{\longrightarrow }...$}
\caption{The attractor $K$ is on the left, its $P$-sprout is at center  and index diagram is on the right. The sequence of predecessors of $p_4$ is shown below.}
\label{f1}
\end{figure}

If the system ${\eS}$ has the finite intersection property, we define its {\em intersection graph} $\Gamma({\eS})$ \cite{TYK} as
an edge-labeled bipartite graph $(W, B; E)$ with parts $W$ and $B$,  in which an edge $ e = (K_i, p ) \in E$ 
if and only if $p \in K_i$. Here, a label assigned to the edge $e=(K_i,p)$  corresponds to the boundary point $p_j \in {\partial} K$, for which $S_i(p_j)=p$. 
This labeled intersection graph $\Gamma({\eS})$ will be called {\em the sprout} of the system ${\eS}$.\\

Together with the graph $\Gamma$, we consider an edge-labeled directed graph ${{\mathcal G}}$, derived from $\Gamma$, which we  call {\em the index diagram} of the system ${\eS}$.  
It shows  the relationship between the boundary points of the attractor $K({\eS})$.
The set of vertices of this graph is ${\partial} K$. 
For $x_i,x_j\in {\partial} K$, there is a directed edge from  $x_i$ to $x_j$ with edge label $w_k$ iff there is  $k\in I$ such that $S_k(x_j)=x_i$. \\

The attractors $K({\eS})$, $K({\EuScript T})$  of  systems of injective contractions ${\eS}=\{S_1,\ldots,S_m\}$ and ${\EuScript T}=\{T_1,\ldots,T_m\}$   are isomorphic if there is a homeomorphism $\varphi:K({\eS}) \to K({\EuScript T})$ such that for any $x\in K({\eS})$ and $i\in I$, $\varphi(S_i(x))=T_i(\varphi(x))$. 
If the latter is fulfilled, the sprouts $\Gamma({\eS})$ and $\Gamma({\EuScript T})$ are also    equivalent.
On the other hand, we show  that if  the sprouts $\Gamma({\eS})$ and $\Gamma({\EuScript T})$ are equivalent, then the  attractors $K({\eS})$ and $K({\EuScript T})$ are isomorphic.\\

Fig.\ref{f1}  shows an example of a self-similar set $K({\eS})$, where ${\eS}=\{S_1, S_2\}$, the sprout $\Gamma({\eS})$ and the index diagram ${{\mathcal G}}({\eS})$.  One sees that the self-similar boundary ${\partial} K$ consists of 4 points, $p_1,p_2,p_3,p_4$ shown in red numbers. 

The only critical point in $K$ is  $S_1(p_4)=S_2(p_3)=p_5$. 
All of this  gives us the sprout shown in the center.
The arrows at the edges of the index diagram on the right are directed toward the predecessors of the boundary points. Thus, each directed walk in the index diagram gives a sequence of predecessors of the initial point of the walk. The sequence of labels in this walk yields the address of the initial point. The sequence below the three pictures  shows the  infinite walk starting from $p_4$. At the same time, the numbers on the labels form the address 
$112121212...$ of  the point $p_4$.

The existence of a directed walk $w_{i_1}...w_{i_n}$ from $a$ to $b$ in this diagram implies that
$S_{i_1...i_n}
(b) = a$ and  $a \in K_{i_1...i_n}$. Thus, each infinite directed walk starting from the point $a$ defines an address of the point $a$. The number of addresses of a boundary point $a$ is equal to the number of infinite walks in
the index diagram that start from $a$, as shown on the right.

\section{Definition of a  $P$-sprout}

\begin{definition}
Let $\Gamma=(B,W,E)$ be an acyclic bipartite  graph with parts $(B,W)$.
Let $P\subset B$; let $\varphi:E\to P$ be a  surjection such that for any $w\in W$, the restriction of $\varphi$ to the subset $E(w)\subset E$ is injective. 
Then the pair $(\Gamma,\varphi)$, is called a $P$-sprout.
\end{definition}

The {\em critical set} of a sprout $(\Gamma,\varphi)$ is the set $C=\{b\in B:{\mathop{\rm Ord} \nolimits}(b,\Gamma)>1\}$. 

For  $b\in B$, we denote by ${\widetilde\varphi}(b)$ the set $\varphi(E(b)$) of all $p\in P$ corresponding to the edges incident with $b$. {\em The boundary of the sprout} $(\Gamma,\varphi)$ is the set
${\partial}\Gamma=\bigcup\limits_{n=1}^\infty \widetilde\varphi^n(C)$.

The sprout $(\Gamma,\varphi)$ is {\em correctly defined} if its boundary ${\partial}\Gamma$ is equal to $P$. (see Fig.2 for incorrectly defined sprout). In the following, it will be assumed that all sprouts are correctly defined.

 Moreover, if for any vertex $b\in B\setminus P$ and for any vertex $w\in W$, its degree in $\Gamma$ is greater than 1, then    
$(\Gamma,\varphi)$ is called a {\em regular} $P$-sprout.\\

{\bf Remark:} In the regular $P$-sprout each component of the complement to the vertex of the sprout contains at least one boundary point.

\begin{figure}[htp]
\centering    
\includegraphics[width=\textwidth]{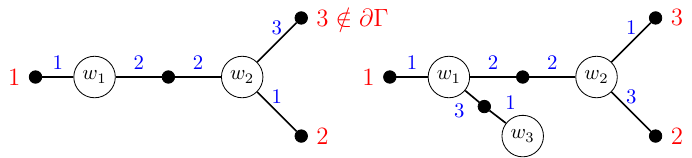}
\caption{ A sprout on the left is not  correctly defined ($3 
    \notin \bigcup\limits_{n=1}^{\infty} \tilde{\varphi}^n(C)
    $), a sprout on the right is defined correctly  but is not regular.  }
\label{fig:cor_def}
\end{figure}

Two sprouts $(\Gamma(B,W,E),\varphi:E\to P)$, $(\Gamma'(B',W',E'),\varphi':E'\to P')$ are {\em  isomorphic} if there is a bijection $x\to x'$ between the respective  sets mentioned above that preserves the incidence relation and such that for any $e\in E$, $(\varphi(e))'=\varphi'(e')$.\\

Let $E_P$ be a  set of edges of $\Gamma$ incident with the vertices $p\in P$.

\begin{definition}
The index diagram of the sprout $(\Gamma,\varphi)$ is  a digraph ${{\mathcal G}}_P=(P,{{\mathcal E}}, \hat\varphi)$  whose vertex set is $P$.  
Each edge $e=(w_k,p_i)\in E_P$ with its label $\varphi(e)=p_j\in P$ defines a directed edge $\vec{e} = ({p_i,p_j}) \in {{\mathcal E}}$ with the label $\hat\varphi(\vec e)=w_k$.
\end{definition}

The paper \cite{Adam} considers the deformations of a polygonal system ${\eS}$ that preserve the sprout of the system ${\eS}$ and produce a parametrized family of isomorphic self-similar dendrites. 


Since the graph $\Gamma$ is a tree, for any $p\in P$ and $w\in W$ there is at most one edge $e$ in $\Gamma$ incident with $p$ and $w$. Let $\varphi(e)=p'$. 
Then there is at most one edge $\vec e=(p,p')\in{{\mathcal E}}$ in ${{\mathcal G}}_P$ such that $\hat\varphi(\vec e)=w.$
Therefore, the  restriction of $\hat\varphi$ to the set of outgoing edges from  $p_i$ is injective.

On the other hand, the existence of two incoming edges $\vec {e_1}=(p_1,p)$,  $\vec {e_2}=(p_2,p)$ such that $\hat\varphi(\vec {e_1})=\hat\varphi(\vec {e_2})=w$ contradicts the injectivity of $\varphi|_{E(w)}$ in $\Gamma$. 
Therefore, the  restriction of $\hat\varphi$ to the set of edges of ${{\mathcal E}}$,  incoming  to  $p_i$, is injective.\\

{\bf The sprout, defined by an IFS.} If a system ${\eS}=\{S_1,...,S_m\}$ with the attractor $K$ and a finite boundary $\partial K=P$ has the single intersection property, its $P$-sprout $\Gamma({\eS})$ is the edge-labeled bipartite intersection graph $\Gamma(W, B, E,\varphi)$ with parts $W=\{K_1,...,K_m \}$ and $B={\mathcal C}\cup \partial K$, in which an edge $ e = (K_i, p ) \in E$ 
iff $p \in K_i$. 
A label $\varphi(e)$ assigned to  $e=(K_i,p)$, is the unique point $p_j \in {\partial} K$ for which $S_i(p_j)=p$. 

\begin{definition}\label{walks}
Let ${{\mathcal G}} =(P,{{\mathcal E}})$ be a   directed graph. A sequence of vertices and edges $\omega(p_0,p_k)=(p_0,\vec{e_1},p_1,\hdots \vec{e_k},p_k)$ is {\em a walk from $p_0$ to $ p_k$ } in ${{\mathcal G}}$ if  for any $i=1,...,k$,  $\vec e_i=(p_{i-1},p_i)$. 
If $p_0=p_k$, then $\omega$ is a {\em cycle} and its vertices are {\em cyclic vertices}.\\
A walk $\omega$ starting from a point $p$ is denoted by $\omega(p,...)$ and a walk that has an endpoint $q$ is denoted by $\omega(...,q)$.
We write $\omega(p,\sigma)$ for a walk whose infinite subwalk is contained in $\sigma$.
\end{definition}

The set of all   walks $\omega(p,q)$ in ${{\mathcal G}}$  is denoted by $\Omega(p,q)$. 
The set of all infinite walks $\omega(p,..)$ in  ${{\mathcal G}}$ starting from a vertex $p\in P$ is denoted by $\Omega(p)$. 
The set of all infinite walks $\omega(p,\sigma)$ in  ${{\mathcal G}} $ is denoted by $\Omega(p,\sigma)$.

If needed, we may represent a walk $\omega$ as a sequence  $(p_0,p_1,\hdots, p_k)$ of its vertices, or a 
sequence $( \vec{e_1},\hdots \vec{e_k})$  of its edges. \\

Let  the attractor $K({\eS})$ of a system ${\eS}=\{S_1,  S_2, \ldots,  S_m\}$  be a self-similar dendrite with a finite   boundary $P$,  $\Gamma({\eS})$ be its sprout, and ${{\mathcal G}}_P$ its index diagram. 
Let $\omega(p_0,p_k)=(p_0,\vec{e_1},\hdots \vec{e_k},p_k)$ be a walk  in ${{\mathcal G}}_P$  and $\hat\varphi(\vec {e_j})=w_{i_j}$. 
Then $p_0=S_{i_1i_2...i_k}(p_k)$. 
We call $\hat\varphi(\omega)=i_1i_2...i_k$ a multiindex defined by the walk $\omega$. Similarly,  an infinite walk $\omega(p_0,...)=(p_0,\vec{e_1},p_1, \vec{e_2}....)$ in ${{\mathcal G}}_P$ defines an infinite string $\hat\varphi(\omega)=\alpha=i_1i_2....$.

\begin{proposition}\label{infwalks}
Let $\omega(p_0,...)$ be an infinite walk in ${{\mathcal G}}_P$. The infinite string $\hat\varphi(\omega)=i_1i_2....$ defined by the walk $\omega$ is the address of the point $p_0$ in $K$ and $\pi^{-1}(p_0)=\hat\varphi( \Omega(p_0))$.
\end{proposition}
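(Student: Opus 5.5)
We prove two inclusions: $\hat\varphi(\Omega(p_0))\subseteq\pi^{-1}(p_0)$ and $\pi^{-1}(p_0)\subseteq\hat\varphi(\Omega(p_0))$.

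\emph{First inclusion.} Let $\omega=(p_0,\vec e_1,p_1,\vec e_2,\dots)$ be an infinite walk with $\hat\varphi(\vec e_j)=w_{i_j}$. By the definition of the index diagram of $\Gamma({\eS})$, the edge $\vec e_j=(p_{j-1},p_j)$ comes from an edge $(K_{i_j},p_{j-1})$ of the sprout with label $p_j$. This means $S_{i_j}(p_j)=p_{j-1}$. Induction on $k$ gives $p_0=S_{i_1\dots i_k}(p_k)\in K_{i_1\dots i_k}$ for every $k$. Hence $p_0\in\bigcap_k K_{i_1\dots i_k}=\{\pi(i_1i_2\dots)\}$, and so $\hat\varphi(\omega)$ is an address of $p_0$.

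\emph{Second inclusion.} Let $\alpha=i_1i_2\dots$ satisfy $\pi(\alpha)=p_0$, and let $y_k=\pi(i_{k+1}i_{k+2}\dots)$ be the predecessors, so $y_0=p_0$ and $y_{k-1}=S_{i_k}(y_k)$. We build the walk step by step, proving by induction that each $y_k$ lies in $P=\partial K$ and that $(y_{k-1},y_k)$ is an edge of ${{\mathcal G}}_P$ labelled $w_{i_k}$.

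Suppose $y_{k-1}\in P$. The point $y_{k-1}=S_{i_k}(y_k)$ lies in $K_{i_k}$, and we must show it is a vertex of the sprout adjacent to $K_{i_k}$, i.e.\ that $y_{k-1}\in B={\mathcal C}\cup\partial K$. This holds since $y_{k-1}\in\partial K$. So $(K_{i_k},y_{k-1})$ is an edge of $\Gamma({\eS})$. Its label is the unique $p\in\partial K$ with $S_{i_k}(p)=y_{k-1}$, which must be $y_k$ provided $y_k\in\partial K$; uniqueness comes from the injectivity of $S_{i_k}$.

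The main obstacle is exactly this step: showing that the predecessor $y_k$ of a boundary point is again a boundary point. I would argue it as follows. Since $y_{k-1}\in\partial K$, some $S_{\bj}(y_{k-1})\in{\mathcal C}$. Then $S_{\bj i_k}(y_k)=S_{\bj}(y_{k-1})\in{\mathcal C}$, so $y_k$ is a predecessor of a point of ${\mathcal C}$ and therefore $y_k\in\partial K$ by definition.

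There is one subtlety: the definition of $\partial K$ allows the empty word, i.e.\ points of ${\mathcal C}$ themselves (and the paper's equality $\partial\Gamma=P$ tacitly includes them). If $y_{k-1}\in{\mathcal C}$ with empty $\bj$, then $S_{i_k}(y_k)\in{\mathcal C}$ directly, and again $y_k\in\partial K$. Thus $(y_{k-1},y_k)$ is an edge of ${{\mathcal G}}_P$ with label $w_{i_k}$. The resulting infinite walk $\omega$ starts at $p_0$ and satisfies $\hat\varphi(\omega)=\alpha$, which proves $\pi^{-1}(p_0)\subseteq\hat\varphi(\Omega(p_0))$.

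Finally, I would note that $\hat\varphi$ is injective on $\Omega(p_0)$, because outgoing labels at each vertex are distinct, as shown earlier. So walks and addresses of $p_0$ are in bijection, which is the counting statement made in the Preliminaries.
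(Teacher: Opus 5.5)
Your proof is correct. The first inclusion is exactly the paper's argument: the walk produces nested copies $K_{i_1\dots i_k}\ni p_0$ whose intersection is $\{p_0\}$. The paper stops there and never argues the reverse inclusion $\pi^{-1}(p_0)\subseteq\hat\varphi(\Omega(p_0))$. You supply it with the observation that a predecessor of a boundary point is again a boundary point (if $S_{\bj}(y_{k-1})\in{\mathcal C}$ then $S_{\bj i_k}(y_k)\in{\mathcal C}$), so that $(K_{i_k},y_{k-1})$ is an edge of the sprout whose label is $S_{i_k}^{-1}(y_{k-1})=y_k$. Your version is therefore the more complete one. Your injectivity remark also justifies the count ``number of addresses $=$ number of infinite walks'' that the paper later uses for ${\rm Ord}(p,K)$.

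One correction to your side remark: the paper does not place ${\mathcal C}$ inside $\partial K$. In Fig.~\ref{f1} the critical point $p_5=S_1(p_4)=S_2(p_3)$ is not a boundary point. Words in $I^*$ are nonempty, and $\partial\Gamma=\bigcup_{n\ge 1}\widetilde\varphi^n(C)$ starts at $n=1$. This does not affect your argument, since the word $\bj i_k$ is nonempty under either reading, so the separate empty-word case is simply unnecessary.
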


\begin{proof}
Any initial subwalk $\omega_k=\omega(p_0, p_k)$ of $\omega(p_0,...)$ defines a copy $S_{i_1...i_k}(K)$ that contains $p_0$; these copies form a nested sequence $K_{i_1}{\supset}...{\supset} K_{i_1...i_k}{\supset}...$, whose intersection is $\{p_0\}$. 
Consequently, $\hat\varphi(\omega(p_0,...))=i_1i_2...$ is the address of the vertex $p_0$.    
\end{proof}

Each address $\alpha=i_1i_2...$ defines a unique point $x=\pi(\alpha)$ in a self-similar set $K$. This imposes the following restriction on the sprouts under consideration.

\begin{definition}
We say that $P$-sprout $\Gamma$ is {\em admissible} if and only if there are no two different boundary points that have the same address. 
\end{definition}
\begin{figure}[htp]
\begin{center}
\includegraphics[width=.7\textwidth]{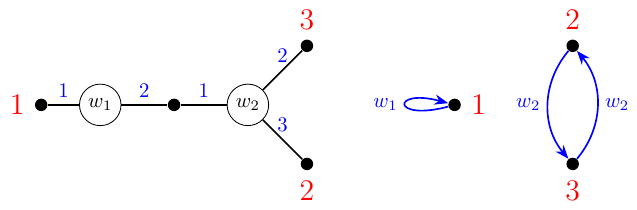}
\caption{Example of an inadmissible $P$-sprout (on the left) and its index diagram (on the right).}
\label{fig:unf}
\end{center}
\end{figure}
\begin{lemma}\label{ordpi}
Let  the attractor $K({\eS})$ of a system ${\eS}=\{S_1,  S_2, \ldots,  S_m\}$  be a self-similar dendrite with  a finite   boundary.
For any $x\in K$, ${\mathop{\rm Ord} \nolimits}(x,K)\ge\#\pi^{-1}(x)$ if the set $\pi^{-1}(x)$ is finite and ${\mathop{\rm Ord} \nolimits}(x,K)$ is infinite if $\pi^{-1}(x)$ is infinite.
\end{lemma}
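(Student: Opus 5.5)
Given $N$ distinct addresses of $x$, I will produce $N$ arcs emanating from $x$ that are pairwise disjoint except at $x$. Recall that ${\rm Ord}(x,K)$ in a dendrite equals the number of components of $K\setminus\{x\}$, or equivalently the supremum of the number of such arcs. Both claims then follow: the finite case directly, and the infinite case because every finite $N$ is attained.

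Take distinct addresses $\alpha^1,\dots,\alpha^N\in\pi^{-1}(x)$. Choose $n$ so large that the prefixes ${\bf i}^k=\alpha^k_1\dots\alpha^k_n$ are pairwise distinct words of length $n$. Then the copies $K_{{\bf i}^1},\dots,K_{{\bf i}^N}$ all contain $x$. Since the words are incomparable, for $k\ne l$ we have $K_{{\bf i}^k}\cap K_{{\bf i}^l}=S_{{\bf i}^k}(\partial K)\cap S_{{\bf i}^l}(\partial K)$. This set contains $x$, and by the SIP of a dendrite (the cited fact that intersections of copies are singletons, propagated to level $n$ by injectivity and acyclicity) it equals $\{x\}$. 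To be safe about level-$n$ intersections, I will argue as follows. Two distinct level-$n$ copies are connected subsets of the dendrite. If their intersection contained two points $x,y$, the arc $\gamma(x,y)$ would lie in each of them, since subcontinua of dendrites contain arcs between their points. Descending to the first index where the words differ reduces this to two distinct first-level copies $K_i\cap K_j$ sharing an arc, which contradicts $\#(K_i\cap K_j)\le1$.

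Next, each copy $K_{{\bf i}^k}$ is a nondegenerate continuum; the trivial one-point attractor has only one address per point anyway. So I pick $y_k\in K_{{\bf i}^k}\setminus\{x\}$ and let $\gamma_k$ be the arc from $x$ to $y_k$ inside $K_{{\bf i}^k}$, which exists because the copy is a subdendrite. For $k\ne l$ we get $\gamma_k\cap\gamma_l\subset K_{{\bf i}^k}\cap K_{{\bf i}^l}=\{x\}$. Thus $x$ is the common endpoint of $N$ arcs that are otherwise disjoint, i.e. an $N$-od with vertex $x$. Hence $x$ has at least $N$ branches, and ${\rm Ord}(x,K)\ge N$, using the standard characterization of order in dendrites (cf. \cite{Kur}, \cite{Char}).

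If $\pi^{-1}(x)$ is finite, take $N=\#\pi^{-1}(x)$. If it is infinite, $N$ is arbitrary, so ${\rm Ord}(x,K)=\infty$. The main point needing care is the claim $K_{{\bf i}^k}\cap K_{{\bf i}^l}=\{x\}$ for distinct words of equal length. I will handle it by the reduction above: strip the common prefix using injectivity of $S_{\bf j}$, then use the first-level SIP. This requires noting that a continuum containing two points of a dendrite contains the arc between them.
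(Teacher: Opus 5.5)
Your argument is correct and is essentially the paper's: distinct length-$n$ prefixes give copies that meet pairwise only in $x$, so $x$ has at least as many branches as addresses. You use an $N$-od where the paper uses components of $K\setminus\{x\}$, and you justify the level-$n$ single-intersection claim that the paper merely asserts.

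One aside is wrong, though. For a one-point attractor every sequence in $I^\infty$ is an address of the point, while its order is $0$. That case is therefore a counterexample to the lemma and must be excluded by taking $K$ nondegenerate, as the paper tacitly does, rather than dismissed.
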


\begin{proof}
If the point $x\in K$ has $l$  different  addresses  $\alpha^k=i^k_1i^k_2....$, then there is $n\in\mathbb{N}$ such that all multiindices ${\bf {i}}^k=i^k_1i^k_2....i^k_n$   are different. 
Consider  two copies $K_{\bf {i}^j}$ and $K_{\bf {i}^k}$. 
Their intersection is $\{x\}$. Consequently, for any connected components $U,V$ of the set $K\setminus\{x\}$ such that $U\cap K_{\bf {i}^j}\neq\varnothing$ and $V\cap K_{\bf {i}^k}\neq\varnothing$, $U\cap V=\varnothing$. Therefore, the cardinality of the set of components of the set $K\setminus\{x\}$ is greater than or equal to $\#\pi^{-1}(x)$.
\end{proof}

\section{Isomorphism theorem  for the attractors.}

 All  definitions and statements of this section will not be used in the next sections,
 so the reader may feel free to omit it.

\begin{definition} 
Let $\Gamma(W,B,E,\varphi)$ be a $P$-sprout. Put $W'=W\times W$.\\ Put $B'=(W\times B\cup B)/\sim$, where $w\times  b_1\sim b_2$ if $b_2\in P, (w,b_1)\in E$ and $\varphi(w,b_1)=b_2$. Put $E'=W\times E$ and $\varphi'(w\times e)=\varphi(e)$.\\
The $P$-sprout $\Gamma(W',B',E', \varphi')$ is denoted by
 $\Gamma^2$.(see Fig.\ref{G^2})\\  
The index diagram ${{\mathcal G}}^2_P$ for the sprout $\Gamma^2$ has the same set $P$ of vertices; the edges are all  walks $\vec{e_i} \vec{e_j}$ of length $2$ in ${{\mathcal G}}_P$.
\end{definition}

\begin{lemma}
Let  the attractor $K({\eS})$ of a system ${\eS}=\{S_i,i\in I\}$  be a self-similar dendrite with  a finite   boundary ${\partial} K=P$ and let $\Gamma$ be its $P$-sprout. \\ The system $\{K_{ij},i,j\in I\}$ has the SIP and is the refinement of the system $\{K_i, i\in I\}$. (see Def. \ref{refin})\\ 
 The sprout $\Gamma^2$ is the $P$-sprout of the system ${\eS}^2=\{S_iS_j,(i,j)\in I^2\}$ and $\Gamma^2$ is the labeled intersection graph for the system of copies $K_{ij}$ of $K$. 
\qed
\end{lemma}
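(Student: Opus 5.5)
The plan is to check the three assertions in turn, using only the single intersection property of $\{K_i\}$ and the identity $K_{\bf i}\cap K_{\bf j}=S_{\bf i}(\partial K)\cap S_{\bf j}(\partial K)$ for incomparable multiindices.

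\emph{Step 1: SIP and refinement.} Each $K_i$ equals $\bigcup_j K_{ij}$, so $\{K_{ij}\}$ refines $\{K_i\}$ in the sense of Definition \ref{refin}. Let $(i,j)\neq(k,l)$.

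If $i=k$, then $K_{ij}\cap K_{il}=S_i(K_j\cap K_l)$. Since $S_i$ is injective, this set has at most one point.

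If $i\neq k$, then $K_{ij}\cap K_{kl}\IN K_i\cap K_k$, which is at most a singleton by SIP.

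So every pairwise intersection has at most one point. Moreover, ${\eS}^2$ has the same attractor $K$, and its self-similar boundary is still $P$. Indeed, any point of the form $S_{ij}^{-1}(K_{ij}\cap K_{kl})$ is a predecessor of a critical point of ${\eS}$; conversely, predecessors of order $2n$ and the intermediate predecessors $S_i^{-1}(c)$ are realized as boundary points of ${\eS}^2$. I would verify this last claim using that each $p\in P$ lies in some $K_{ij}$.

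\emph{Step 2: identifying $\Gamma^2$ with the intersection graph of $\{K_{ij}\}$.} I would define the correspondence $w_i\times w_j\mapsto K_{ij}$ on white vertices. On black vertices, $w_i\times b\mapsto S_i(b)$ for $b\in B$, and $b\mapsto b$ for $b\in B$ taken as the second summand. An edge $w_i\times(w_j,b)$ goes to the incidence $S_i(b)\in K_{ij}$, which holds because $b\in K_j$.

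The relation $\sim$ identifies $w_i\times b_1$ with $b_2\in P$ exactly when $(w_i,b_1)\in E$ and $\varphi(w_i,b_1)=b_2$, that is, when $S_i(b_2)=b_1$. I would check that this matches the geometry. Every point of $K_{ij}\cap K_{kl}$ with $i\ne k$ is a point of ${\mathcal C}$, hence of the form $S_i(b)=S_k(b')$ with $b,b'\in P\subseteq B$. Points of $K_{ij}\cap K_{il}$ are $S_i(c)$ with $c\in{\mathcal C}\IN B$. Boundary points $p\in P$ inside $K_{ij}$ equal $S_i(q)$ with $q\in P$, which is exactly the case collapsed by $\sim$.

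The labels then agree: $\varphi'(w_i\times e)=\varphi(e)$ is the point $q\in P$ with $S_j(q)=b$, so $S_{ij}(q)=S_i(b)$, which is the label in the intersection graph of ${\eS}^2$.

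\emph{Step 3: the main obstacle.} The hard part is showing that the map on $B'$ is a bijection onto ${\mathcal C}({\eS}^2)\cup P$. Surjectivity follows from the case analysis in Step 2. For injectivity, suppose $S_i(b)=S_k(b')$ with distinct representatives. If $i\ne k$, the common point lies in $K_i\cap K_k$, which is a single point of ${\mathcal C}$, so $b$ and $b'$ are both joined to that critical vertex of $\Gamma$. I would first check whether $\sim$ already yields this identification. If it does not, the definition of $B'$ must be read as also gluing along critical points of the first level. I would verify this on the example of Fig.\ref{f1} before committing.

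Acyclicity of $\Gamma^2$ then follows because it is the nerve-type graph of a SIP system of subcontinua of a dendrite, and a cycle there would produce a simple closed curve in $K$. Finally, the description of ${{\mathcal G}}^2_P$ is immediate: an edge of $\Gamma^2$ at $p\in P$ with label $q$ corresponds to $p=S_{ij}(q)$, which is exactly a length-2 walk in ${{\mathcal G}}_P$.
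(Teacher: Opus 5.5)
The paper gives no argument for this lemma; it is stated as evident. Your outline therefore has to carry the proof itself, and Step~3, which you correctly identify as the crux, does not.

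\emph{Injectivity and the orientation of $\sim$.} You leave injectivity open because you never fix which way $\sim$ glues. Under your map $w_i\times b\mapsto S_i(b)$, $b\mapsto b$, the rule as printed ($w\times b_1\sim b_2$ with $S_w(b_2)=b_1$) glues $S_w(b_1)=S_w^2(b_2)$ to $b_2$. These points differ unless $b_2$ is the fixed point of $S_w$, so your map is not even defined on that quotient. What your Step~2 actually uses is the rule $w_i\times q\sim b$ whenever $(w_i,b)\in E$ and $\varphi(w_i,b)=q$. State that rule and take the equivalence relation it generates. Then injectivity needs no check on an example:
\begin{itemize}
\item If $S_i(b)=S_k(b')$ with $i\ne k$, the common point is some $c\in{\mathcal C}\IN B$ and $b,b'\in P$. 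Hence $w_i\times b\sim c\sim w_k\times b'$ through the level-one vertex $c$.
\item If $S_i(b')=b\in B$, then $b'=S_i^{-1}(b)\in P$, since $P$ contains every predecessor of a point of ${\mathcal C}\cup P$. Hence $w_i\times b'\sim b$.
\item If $S_i(b)=S_i(b')$, injectivity of $S_i$ forces $b=b'$.
\end{itemize}

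\emph{The map is not into ${\mathcal C}({\eS}^2)\cup P$.} This is the more serious problem: the bijection you aim for is false as stated. The image of $B'$ is $B\cup\bigcup_i S_i(B)$, whereas ${\mathcal C}({\eS}^2)\cup P={\mathcal C}\cup\bigcup_i S_i({\mathcal C})\cup P$. Take $w_i\times p$ with $p\in P\setminus{\mathcal C}$ and $S_i(p)\notin{\mathcal C}\cup P$. It is a pendant black vertex of $\Gamma^2$ with no counterpart among the intersection points of the $K_{ij}$.

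This already happens in Fig.~\ref{f1}. There $K_1\cap K_2=\{p_5\}$ with $p_5\notin P$, so each $p_k$ lies in exactly one $K_i$. Hence $\Gamma$ has six edges, and only six of the eight points $S_i(p_k)$ lie in $B$. The other two are not of the form $S_{i'}(p_5)$ either, by injectivity of $S_i$ and $K_1\cap K_2=\{p_5\}$. So they lie outside ${\mathcal C}({\eS}^2)\cup P$.

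You must therefore delete such pendant non-boundary black vertices from $\Gamma^2$, or prove the identification modulo them. This is harmless for Theorem~\ref{isom}, which only uses which $K_{ij}$ meet.

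\emph{A smaller gap in Step~1.} To get $P\IN\partial_{{\eS}^2}K$ when $S_{\bf j}(p)=c\in{\mathcal C}$ with $|{\bf j}|$ odd, prepend a letter: $S_{i{\bf j}}(p)=S_i(c)\in{\mathcal C}({\eS}^2)$. The fact that $p$ lies in some $K_{ij}$ does not give this.
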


\begin{figure}[htp]\label{G^2}
\begin{center}
\includegraphics[width=\textwidth]{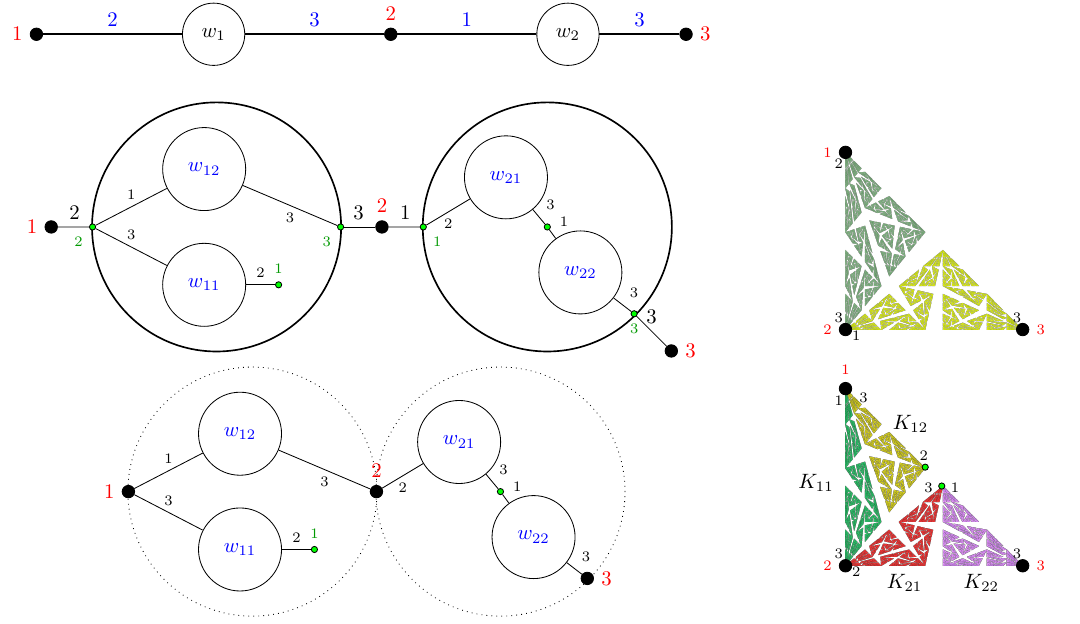}
\caption{\small The left column  shows how the sprout $\Gamma^2$(at the bottom) is obtained from the sprout $\Gamma$ (at the top). The  middle picture shows the neighborhoods of points 1,2 and 3 that shrink to each of these points. Right column shows the systems of  copies $K_i$ and $K_{ij}$. }
\end{center}
\end{figure}

\begin{corollary}
If systems ${\eS}=\{S_1,...,S_m\}$ and $\tilde {\eS}=\{\tilde S_1,...,\tilde S_m\}$ have isomorphic sprouts $\Gamma$ and $\tilde \Gamma$, then for any $n\in \mathbb{N}$, the sprouts $\Gamma^{2^n}$ and $\tilde{\Gamma}^{2^n}$ are isomorphic.
\qed
\end{corollary}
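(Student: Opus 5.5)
The corollary follows by induction on $n$ once we show that the squaring operation $\Gamma\mapsto\Gamma^2$ is functorial with respect to sprout isomorphisms. I will prove: if $(\Gamma,\varphi)$ and $(\tilde\Gamma,\tilde\varphi)$ are isomorphic $P$-sprouts, then $\Gamma^2$ and $\tilde\Gamma^2$ are isomorphic. Applying this $n$ times to $\Gamma^{2^{k+1}}=(\Gamma^{2^k})^2$ gives the claim. Note that $\Gamma^{2^k}$ is again a $P$-sprout, on the same set $P$, by the preceding Lemma. Alternatively, one can apply that Lemma to the system ${\eS}^{2^k}$.

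Let $x\mapsto x'$ be the given isomorphism, defined on $W$, $B$ and $E$, with $(\varphi(e))'=\tilde\varphi(e')$. I define the candidate isomorphism $\Gamma^2\to\tilde\Gamma^2$ componentwise:
\[
W'\ni (w_1,w_2)\mapsto (w_1',w_2'),\qquad E'\ni w\times e\mapsto w'\times e'.
\]
On $W\times B\cup B$ the map is $w\times b\mapsto w'\times b'$ and $b\mapsto b'$.

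The one point needing care is that this last map descends to the quotient $B'=(W\times B\cup B)/\sim$. The relation $\sim$ is generated by $w\times b_1\sim b_2$, where $b_2\in P$, $(w,b_1)\in E$ and $\varphi(w,b_1)=b_2$. The isomorphism preserves incidence, so $(w',b_1')\in\tilde E$. It also sends $P$ to $\tilde P$, since $P=\varphi(E)$ and labels are transported. Finally $\tilde\varphi(w',b_1')=(\varphi(w,b_1))'=b_2'$. Hence $w'\times b_1'\sim b_2'$, so generating pairs go to generating pairs. The inverse isomorphism gives the converse, so the induced map on classes is a well-defined bijection.

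It remains to check the sprout structure. Incidence in $\Gamma^2$ between $w\times e$ and its endpoints $w\times w_0$ and $[w\times b]$, where $e=(w_0,b)$, is clearly preserved. Labels are preserved because $\varphi'(w\times e)=\varphi(e)$ and $(\varphi(e))'=\tilde\varphi(e')=\tilde\varphi'(w'\times e')$.

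I expect the only real obstacle to be the well-definedness on the quotient $B'$; the rest is bookkeeping.
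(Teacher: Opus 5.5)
Your proposal is correct and is essentially the argument the paper leaves implicit: the corollary is stated without proof right after the lemma identifying $\Gamma^2$ as the $P$-sprout of ${\eS}^2$. The point there is that $\Gamma\mapsto\Gamma^2$ uses only the combinatorial data $(W,B,E,\varphi)$, so a sprout isomorphism carries over to the squares, and one then iterates. Your explicit check that generating pairs of $\sim$ go to generating pairs is exactly the detail the paper omits, and it works whichever way the identification in $B'$ is oriented.
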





\begin{definition}\label{fipss}\cite{TYK}
Let $\mathcal K=\{K_i, i\in I=\{1, \ldots, m\}\}$ be a finite system of continua in a complete metric space $(X,d)$.  We say $\mathcal K$ has the {\em single intersection  property (SIP)},  if for any  $i\neq j\in I $, the set $P_{ij}=(K_i\cap K_j)$ is empty or a singleton.  
\end{definition}

In the setting of Definition \ref{fipss} we denote $|\mathcal K|=\bigcup\limits_{i\in I}K_i$,   $P=\bigcup\limits_{i\neq j}P_{ij} $, and $P_i=\bigcup\limits_{j\in I\setminus\{i\}}P_{ij}$.  Regarding $|\mathcal K|$ as the subspace of $X$ supplied by the induced topology, we see that for any $i$ the set $P_i$ is the boundary $\partial K_i$ of the set $K_i$ in $|\mathcal K|$,  and that its interior is $\dot K_i=K_i\setminus P_i$.  Observe that for any $i\in I$,  $\#\partial K_i\le m-1$.
If a point $x$ lies in some $P_i$, then we call $x$ the {\em boundary point} for $K_i$.\\

\begin{definition}\label{refin}\cite{TYK}
Let  $ \mathcal K = \{K_i,i \in I =  \{1,\hdots,m \} \}$  and  $\mathcal L= \{L_j,j \in J =  \{1,\hdots,n \} \}$  be SIP systems of continua in $X$.\\
We say that $\mathcal L$ {\em is the refinement} of the system $\mathcal K$ if
\begin{itemize}
\item[(i)] for any $L \in \mathcal L$ there is $K \in \mathcal K$ such that $L \subset K$;
\item[(ii)]for any $K \in \mathcal K$ the union $|{\mathcal L}_K| $ of the sets of a subsystem $\mathcal L_K = \{ L \in \mathcal L : L \subset K \}$ is connected and contains $\partial K$.
\end{itemize}

\end{definition}

\begin{definition}\label{refsys}\cite{TYK}
Let $\{\mathcal K^n \} = \mathcal K^1, \mathcal K^2, \hdots $ be a sequence of SIP systems of continua. If for any $n$ the system $\mathcal K^{n+1}$ is a refinement of the system $\mathcal K^{n}$ and\\ for any nested sequence  $\{K^n_{i_n} \}_{n=1}^\infty$ such that $K^n_{i_n} \in \mathcal K^n$ and $K^n_{i_n} \supset K^{n+1}_{i_{n+1}}$,\\ the intersection $\bigcap\limits_{n=1}^\infty K^n_{i_n} = \{x\}$ is a singleton,\\ then we call $\{\mathcal K^n\}$ {\em a refinement sequence} for the set $K=|\mathcal K|$.
\end{definition}

\begin{proposition}\label{sip_qom}
Let  $\{\mathcal  K^n\}$  and $\{\tilde{\mathcal {K}}^n\}$ be   refinement sequences of SIP systems of continua for the sets $K$ and $\tilde{K}$, such that 
\begin{itemize}
\item[(i)] for any $n$, there is a bijection  $ \varphi:\mathcal{K}^n\to  \mathcal{\tilde K}^n$   such that $\varphi( K_i^n) =\tilde K_i^n $;
\item[(ii)] for any $K_i^n, K_j^n\in{\mathcal K^n}$, $K_i^n\cap K_j^n\neq\varnothing $ iff $ \tilde  K_i^n\cap \tilde K_j^n\neq\varnothing$;
\item[(iii)] for any $K_i^n\in\mathcal K^n,\ K_j^{n+1}\in\mathcal K^{n+1}$,\  $K_i^n\supset K_j^{n+1}$ iff $ \tilde  K_i^n\supset \tilde K_j^{n+1}$.
\end{itemize}
Then there exists a homeomorphism $f : K \to \tilde{K}$ such that for any $i$ and $n \in \mathbb{N} $ $f(K_{i}^n) = \tilde{K}_{i}^n$. 
\end{proposition}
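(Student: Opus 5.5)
We define $f$ through nested sequences. For $x\in K$, consider a nested sequence $K^1_{i_1}\supset K^2_{i_2}\supset\dots$ with $x\in K^n_{i_n}$ for all $n$. Such a sequence exists: since $K=|\mathcal K^1|$, some $K^1_{i_1}$ contains $x$. By condition (ii) of Definition \ref{refin}, $K^1_{i_1}=|\mathcal K^2_{K^1_{i_1}}|$, so some $K^2_{i_2}\subset K^1_{i_1}$ contains $x$; here one has to check that refinement forces $K^n_i$ to be covered by its subsystem. By (iii), the images $\tilde K^n_{i_n}$ are nested too. By Definition \ref{refsys}, their intersection is a single point, and we set $f(x)$ to be that point.

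The first task is well-definedness. Suppose $x$ has two such chains $(i_n)$ and $(j_n)$. Then $K^n_{i_n}\cap K^n_{j_n}\ni x$ for every $n$, so by (ii) $\tilde K^n_{i_n}\cap\tilde K^n_{j_n}\ne\varnothing$. I want the limit points $\tilde y$ and $\tilde z$ to coincide. Suppose $\tilde y\neq\tilde z$. The diameters of the nested pieces tend to zero, by compactness together with the singleton-intersection condition: a standard argument via a finite depth bound, using that each level is finite. So for large $n$ the pieces $\tilde K^n_{i_n}$ and $\tilde K^n_{j_n}$ are within $\varepsilon$ of $\tilde y$ and $\tilde z$ respectively, and they cannot meet. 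This contradicts (ii).

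The same argument gives injectivity, by symmetry. Applying the construction to the inverse data, which satisfies the same hypotheses, gives $g:\tilde K\to K$ with $g\circ f=\mathrm{id}$ and $f\circ g=\mathrm{id}$. Each point of $K^n_i$ has a chain passing through $K^n_i$: at levels $k<n$ use the unique ancestor, and continue downward below. Hence $f(K^n_i)\subset\tilde K^n_i$, and symmetrically we get equality.

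For continuity, let $x_k\to x$. The uniform smallness of the level-$n$ pieces gives the following. Since the system $\mathcal K^n$ is finite, for large $k$ the point $x_k$ lies in some $K^n_j$ meeting a piece $K^n_i$ that contains $x$. Otherwise $x_k$ would stay in the union of the level-$n$ pieces not meeting any piece containing $x$, a compact set missing $x$. Then $f(x_k)\in\tilde K^n_j$, and by (ii) $\tilde K^n_j$ meets $\tilde K^n_i\ni f(x)$. So $d(f(x_k),f(x))\le 2\max_j \mathrm{diam}\,\tilde K^n_j$.

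The main obstacle is proving $\max_j\mathrm{diam}\,\tilde K^n_j\to0$. I plan to argue by contradiction using König's lemma. Suppose infinitely many levels have a piece of diameter at least $\delta$. Pieces of diameter at least $\delta$ form a subtree of the finitely branching inclusion tree, because diameter is monotone under inclusion. This subtree is infinite, so it has an infinite branch. Its nested intersection is a singleton, yet the diameters stay at least $\delta$; by compactness of nested continua this is impossible. A continuous bijection between compacta is then a homeomorphism.
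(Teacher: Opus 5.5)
Your proof is correct and takes essentially the paper's route: $f$ is defined through nested chains, conditions (ii)--(iii) give well-definedness and bijectivity, and continuity comes from the level-$n$ pieces around $x$. Your König's-lemma bound $\max_j\mathrm{diam}\,\tilde K^n_j\to 0$ fills in what the paper uses tacitly when it says the sets $V_n(x)$ form a neighbourhood base. The covering property you flag is also used silently by the paper, and it holds when every $\mathcal K^n$ covers $K$: if $x\in K^n_i$ lies in a level-$(n+1)$ piece contained in some $K'\neq K^n_i$, then $x\in K^n_i\cap K'\subset\partial K^n_i\subset|\mathcal L_{K^n_i}|$ by condition (ii) of Definition \ref{refin}.
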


\begin{proof}

For any $x\in K$ there is a  nested sequence  $K_{ i_n}^n$  such that $x = \bigcap\limits_{n=1}^\infty K_{ i_n}^n$. Put $f(x)=\tilde x= \bigcap\limits_{n=1}^\infty \tilde K_{ i_n}^n$. By   (iii), the intersection is a singleton, so $\tilde x$ is correctly defined. If $x=\bigcap\limits_{n=1}^\infty K_{ j_n}^n$ for some $ K_{ j_n}^n \neq K_{ i_n}^n$, then for any $n$, $K_{ i_n}^n\cap K_{ j_n}^n=\{x\}$   implies that $\tilde K_{ i_n}^n\cap \tilde K_{ j_n}^n=\{\tilde x\}$, so
the function $f$ is correctly defined. \\
Let $x = \bigcap\limits_{n=1}^\infty K_{ i_n}^n$ and
$  y = \bigcap\limits_{n=1}^\infty K_{ j_n}^n$ in $K$.\\
Clearly, $x \neq y $ iff there is $n \in \mathbb{N}$ such that 
$K_{ i_n}^n \cap K_{ j_n}^n = \varnothing$, which is equivalent to $\tilde{K}_{i_n}^n \cap \tilde{K}_{j_n}^n = \varnothing $ and  therefore  to $\tilde{x} \neq \tilde{y}$. This shows that the map $f$ is a bijection. \\

The relation $x\in K_i^n$ iff $\tilde x\in\tilde K_i^n$ implies that for any $K_i^n$, $f(K_i^n)=\tilde K_i^n$.

Given a point $x\in K$, define 
$V_n(x)= \bigcup\limits_{K_{i_k}^n \ni x_0} K_{i_k}^n $. By Definition \ref{refsys}, the family $\{V_n(x), n\in \mathbb{N}\}$ is a neighborhood base for $x$. The sets  
$f(V_n)=\bigcup\limits_{\tilde K_{i_k}^n \ni \tilde x} \tilde K_{i_k}^n=\tilde V_n$ form a
 neighborhood base for $\tilde x$. Therefore, $f$ is a homeomorphism. 
\end{proof}

\begin{theorem}\label{isom}
   If the systems ${\eS}=\{S_1,...,S_m\}$ and $\tilde {\eS}=\{\tilde S_1,...,\tilde S_m\}$ have isomorphic $P$-sprouts $\Gamma$ and $\tilde \Gamma$, then their attractors $K$ and $ \tilde K$ are isomorphic.
\end{theorem}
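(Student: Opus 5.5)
We would prove Theorem \ref{isom} by applying Proposition \ref{sip_qom} to the refinement sequences $\mathcal K^n=\{K_{\bf i}: {\bf i}\in I^n\}$ and $\tilde{\mathcal K}^n=\{\tilde K_{\bf i}: {\bf i}\in I^n\}$, with the bijection $K_{\bf i}\mapsto \tilde K_{\bf i}$ given by equality of multiindices. The resulting homeomorphism $f:K\to\tilde K$ satisfies $f(K_{\bf i})=\tilde K_{\bf i}$ for all ${\bf i}$. Equivariance then follows: for $x=\pi(\alpha)$ we have $f(x)=\tilde\pi(\alpha)$, because $\{x\}=\bigcap_n K_{\alpha_1\ldots\alpha_n}$ and the construction in the proof of Proposition \ref{sip_qom} sends it to $\bigcap_n \tilde K_{\alpha_1\ldots\alpha_n}$. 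Hence $f(S_i(x))=f(\pi(i\alpha))=\tilde\pi(i\alpha)=\tilde S_i(\tilde\pi(\alpha))=\tilde S_i(f(x))$.

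The first step is to check that $\{\mathcal K^n\}$ is a refinement sequence. The SIP of $\mathcal K^n$ follows from the preceding Lemma applied iteratively: the system ${\eS}^n$ has attractor $K$ with the same boundary $P$, so the intersection of two copies $K_{\bf i}\cap K_{\bf j}$ is contained in $S_{\bf i}(P)\cap S_{\bf j}(P)$. Since $K$ is a dendrite, two distinct points in this intersection would give a closed curve. For refinement condition (ii), note that $|\mathcal K^{n+1}_{K_{\bf i}}|=S_{\bf i}(K)=K_{\bf i}$, which is connected, and contains $\partial K_{\bf i}\subset K_{\bf i}$. Nested intersections are singletons because $\mathrm{diam}\,K_{\bf i}\to 0$. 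The same holds for $\tilde{\eS}$.

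The central step is conditions (ii) and (iii) of Proposition \ref{sip_qom}. Condition (iii) is trivial, since $K_{\bf i}\supset K_{\bf j}$ with $|{\bf j}|=|{\bf i}|+1$ holds iff ${\bf i}$ is a prefix of ${\bf j}$; the "only if" direction needs $K_{{\bf i}k}\not\subset K_{\bf i'}$ for ${\bf i'}\ne{\bf i}$, which holds by SIP since copies are nondegenerate. For (ii), we show by induction on $n$ that the $P$-sprout of ${\eS}^n$, i.e. the labeled intersection graph of $\mathcal K^n$, is determined by $\Gamma$ alone. We would use the recursive construction $\Gamma\mapsto\Gamma^2$ generalized to $\Gamma^n\times\Gamma\mapsto\Gamma^{n+1}$, gluing $w\times b_1$ to $b_2$ when $\varphi(w,b_1)=b_2\in P$; the Corollary already gives this for $n=2^k$. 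Concretely, two copies $K_{{\bf i}}$ and $K_{{\bf j}}$ with first differing letters $i_k\ne j_k$ intersect iff the words decompose as a common prefix followed by walks in the index diagram ${\mathcal G}_P$. Precisely, there must be a critical vertex $c$ with edges $(w_{i_k},c)$ and $(w_{j_k},c)$ with labels $p,q$, such that the tails $i_{k+1}\ldots i_n$ and $j_{k+1}\ldots j_n$ are labels of walks in ${\mathcal G}_P$ starting at $p$ and $q$ respectively. These data depend only on $(\Gamma,\varphi)$, so isomorphic sprouts give identical intersection patterns. This is the consistency step underlying the $\Gamma^2$ Lemma, and it uses that $K_{\bf i}\cap K_{\bf j}=S_{\bf i}(P)\cap S_{\bf j}(P)$.

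The main obstacle is justifying this walk characterization rigorously, in the "only if" direction. If $x\in K_{\bf i}\cap K_{\bf j}$, then $x\in K_{{\bf u}i_k}\cap K_{{\bf u}j_k}=S_{\bf u}(\{c\})$, so $S_{\bf u}^{-1}(x)$ is a critical point $c$. Descending, each successive predecessor lies in $P$, and each step is an edge of ${\mathcal G}_P$ by the definition of the sprout labels. The converse direction is immediate from $p_0=S_{\hat\varphi(\omega)}(p_k)$. Once this is in place, (i)–(iii) hold and Proposition \ref{sip_qom} yields the isomorphism.
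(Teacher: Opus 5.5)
Your proof is correct and has the same skeleton as the paper's: apply Proposition~\ref{sip_qom} to refinement sequences of copies, then read off equivariance. The key combinatorial step, however, is handled differently.

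\textbf{Verifying condition (ii).} The paper works only with the levels $\{K_{\bf i}:{\bf i}\in I^{2^n}\}$. It obtains condition (ii) of Proposition~\ref{sip_qom} by citing the Lemma that $\Gamma^2$ is the labeled intersection graph of $\{K_{ij}\}$, together with its Corollary that $\Gamma^{2^n}\cong\tilde\Gamma^{2^n}$; both are stated there without proof. You instead use every level $I^n$ and prove the criterion directly. After the common prefix ${\bf u}$, $K_{\bf i}\cap K_{\bf j}\neq\varnothing$ iff $w_{i_k}$ and $w_{j_k}$ share a critical neighbour $c$ in $\Gamma$, and the two tails are label sequences of walks in $\mathcal G_P$ starting at the labels of $(w_{i_k},c)$ and $(w_{j_k},c)$.

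Both directions of your argument are sound. They rest on the injectivity of $S_{\bf u}$, the closure of $P$ under predecessors, and $p_0=S_{\hat\varphi(\omega)}(p_k)$. Your criterion transfers only the nonemptiness pattern, not the full labeled intersection graph. That is exactly what Proposition~\ref{sip_qom} needs, and it makes your argument self-contained where the paper's relies on an unproved lemma.

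\textbf{Equivariance.} Using all levels also simplifies the last step: $f(\pi(\alpha))=\tilde\pi(\alpha)$ gives equivariance at once. The paper cannot do this, because $|j{\bf i}|=2^n+1$ is not a level of its sequence. It therefore has to decompose $S_j(K_{\bf i})$ into copies $K_{j{\bf i}{\bf k}}$ of level $2^{n+1}$.

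\textbf{Wording fix.} Two distinct points of $K_{\bf i}\cap K_{\bf j}$ do not literally produce a closed curve. The correct reason is uniqueness of arcs in the dendrite $K$: the arc joining them would lie in both copies, hence in the finite set $S_{\bf i}(P)\cap S_{\bf j}(P)$, which is impossible.
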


\begin{proof}

Consider two sequences of families of sets ${\EuScript K}^n=\{K_{\bf {i}}, {\bf {i}}\in I^{2^n}\}$ and $\tilde{\EuScript K}^n=\{\tilde K_{\bf {i}}, {\bf {i}}\in I^{2^n}\}$. Each of these families is the SIP system of continua. The families
 ${\EuScript K}^{n+1}, \tilde{\EuScript K}^{n+1}$ are the refinements of the systems  $ {\EuScript K}^n, \tilde{\EuScript K}^n$ respectively, so ${\EuScript K}^n, n\in\mathbb{N}$ and $\tilde{\EuScript K}^n, n\in\mathbb{N}$ are the refinement sequences of SIP systems of continua for the sets $K$ and $\tilde K$. The intersection graphs for the  systems  $ {\EuScript K}^n, \tilde{\EuScript K}^n$ are the isomorphic sprouts $\Gamma^{2^n}$ and $\tilde \Gamma^{2^n}$. By Proposition \ref{sip_qom}, there is a homeomorphism $f : K \to \tilde{K}$ such that for any   $n \in \mathbb{N} $ and $K_{\bf { i}}\in {\EuScript K}^n$ $f(K_{\bf {i}}) = \tilde{K}_{\bf {i}}\in \tilde{\EuScript K}^n$. 
 
 Now, if we take some $j\in I$ and $K_{\bf {i}}\in {\EuScript K}^n$, then we can represent $S_j(K_{\bf {i}})$ as a finite union of copies $K_{j\bf {i}\bf {k}}\in  {\EuScript K}^{n+1}$, to obtain $f(S_j(K_{\bf {i}}))= \tilde S_j(\tilde K_{\bf {i}})$. This shows that for any $x\in K$, $f(S_j(x))=\tilde S_j(f(x))$.
\end{proof}

\section{The ramification points and the main tree of a self-similar dendrite.}
 
If $K$ is a dendrite, then {\em the order ${\mathop{\rm Ord} \nolimits}(x,K)$ of a point} $x\in K$ is equal to the number of connected components of $K\setminus\{x\}$.  \cite[Ch.6,Th. 6]{Kur}. 
     
The points of order 1 are called the {\em end points} of $K$, the points of  order $\ge$ 2 are  the {\em cut points} of $K$,  the points of order $\ge$ 3 are the {\em ramification points} of $K$.

 \begin{definition}\label{mtree}\cite{STV}
Let the attractor $K$ of the  system ${\eS}=\{S_1,...,S_m\}$ be a self-similar dendrite with a finite self-similar boundary $\partial K$. 
The minimal subdendrite $\hga{\subset} K$ that contains $\partial K$ is called the {\em main tree} of $K$.
\end{definition}
Since  $\partial K=\{p_1,...,p_n\}$ is finite and for any two points $x,y\in K$ there is a unique subarc $\gamma(x,y){\subset} K$,
the main tree $\hga$ is a finite union of all subarcs $\gamma(p_i,p_j){\subset} K$. Thus, $\hga$ is
a finite topological tree. As was proved in
\cite[Theorem 15] {STV}, the arcs $\gamma(p_i,p_j)$ are the components of the attractor of some multizipper, which is a graph-directed
system that has the SIP. The arcs $\gamma(p_i,p_j)$ are called {\em the main subarcs} of the dendrite $K$.

We consider the set $G_{\eS}(\hga)$ that is the union of all images $S_{\bf j}(\hga)$ of the main tree, and its subset
$G_{\eS}(C)=\bigcup\limits_{{\bf j}\in I^*} S_{\bf j}(C)$, that is the set of all images of critical points $c\in C$.  It is essential because  
\begin{equation}\label{uadr}
 G_{\eS}(C)=\{x\in K: \#\pi^{-1}(x)>1\}.  
\end{equation}

\begin{proposition}\label{ord_cut}
Let the attractor $K$ of the system ${\eS}$ be a self-similar dendrite with finite boundary ${\partial} K$ and $\Gamma$ be its $P$-sprout.\\
1) 
For any $x\in K\setminus G_{\eS}(C)$, ${\mathop{\rm Ord} \nolimits}(x,K)\le\#P$.\\
2)  For any $x\in K\setminus G_{\eS}(C)$, there is ${\bf {i}}\in {I^*}$ such that ${\mathop{\rm Ord} \nolimits}(x,K)={\mathop{\rm Ord} \nolimits}(x,S_{\bf {i}}(\hga))$.\\
3) The set of cut points $CP(K)$ is contained in the set $ G_{\eS}(\hat \gamma)$.\\ 
4) Each  point $x\in K\setminus G_{{\eS}}(C)$ has a unique  address. If $x$ is a ramification point, this address is preperiodic.\\
5) $\hga{\subset} \bigcup\limits_{i=1}^m S_i(\hat \gamma)$.
\end{proposition}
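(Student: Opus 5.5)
The plan rests on one observation. By (\ref{uadr}), a point $x\in K\setminus G_{\eS}(C)$ has a unique address $\alpha=\alpha_1\alpha_2\ldots$. Hence, for each $n$, $x$ lies in exactly one copy of order $n$, namely $K_{\alpha_1\ldots\alpha_n}$. If $x$ lay in two copies $K_{\bf i},K_{\bf j}$ with incomparable ${\bf i},{\bf j}$ of length $n$, it would have two addresses. So $K_{\alpha_1\ldots\alpha_n}$ is a neighbourhood of $x$. Its topological boundary in $K$ is contained in $S_{\alpha_1\ldots\alpha_n}(P)$, since $K_{\bf i}\cap K_{\bf j}=S_{\bf i}(\partial K)\cap S_{\bf j}(\partial K)$.

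\emph{Part 1).} Suppose $K\setminus\{x\}$ has at least $\#P+1$ components $U_1,\ldots,U_{\#P+1}$, and pick $y_k\in U_k$. Choose $n$ so large that $K_{\alpha_1\ldots\alpha_n}$ contains no $y_k$. Each set $U_k\cup\{x\}$ is connected and meets both $K_{\alpha_1\ldots\alpha_n}\setminus\{x\}$ and its complement. Therefore $U_k$ contains a boundary point $q_k\in S_{\alpha_1\ldots\alpha_n}(P)$ with $q_k\neq x$. The components are disjoint, so the $q_k$ are distinct, which contradicts $\#S_{\alpha_1\ldots\alpha_n}(P)\le\#P$.

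\emph{Part 2).} The same $n$ now works for all of the finitely many components, giving distinct points $q_1,\ldots,q_r\in S_{\alpha_1\ldots\alpha_n}(P)$ with $r={\rm Ord}(x,K)$. If $r\ge2$, then $x\in\gamma(q_k,q_l)$, and this arc lies in $K_{\alpha_1\ldots\alpha_n}$. Applying $S^{-1}_{\alpha_1\ldots\alpha_n}$ shows that the predecessor $x_n$ lies on a main subarc, so $x\in S_{\bf i}(\hga)$ with ${\bf i}=\alpha_1\ldots\alpha_n$. The arcs $\gamma(x,q_k)$ lie in $S_{\bf i}(\hga)$ and lie in distinct components of $K\setminus\{x\}$, so ${\rm Ord}(x,S_{\bf i}(\hga))\ge r$. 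Conversely, two branches of the tree $S_{\bf i}(\hga)$ at $x$ cannot lie in one component of $K\setminus\{x\}$, because arcs in a dendrite are unique; this gives the reverse inequality. The case ${\rm Ord}(x,K)=1$ is the delicate one, because an end point need not lie on any $S_{\bf i}(\hga)$. I expect this to be the main obstacle, together with justifying that boundary points of $K_{\bf i}$ in $K$ lie in $S_{\bf i}(P)$. For end points I would either restrict the claim to cut points, which is all that 3) and 4) need, or interpret it via the arc $\gamma(x,q_1)$.

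\emph{Part 3).} For $x\in G_{\eS}(C)$ we have $x=S_{\bf j}(c)$ with $c\in K_i\cap K_j$. Then $c\in S_i(P)\subset S_i(\hga)$, so $x\in G_{\eS}(\hga)$. For cut points outside $G_{\eS}(C)$, part 2) with $r\ge2$ gives the inclusion.

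\emph{Part 4).} Uniqueness of the address is exactly (\ref{uadr}). For a ramification point, every predecessor $x_n$ is again outside $G_{\eS}(C)$, since otherwise $x$ would be in $G_{\eS}(C)$. For all large $n$, the argument of part 2) gives $x_n\in\hga$ with ${\rm Ord}(x_n,\hga)\ge3$. The finite tree $\hga$ has finitely many ramification points, so $x_n=x_{n'}$ for some $n<n'$. Uniqueness of addresses then gives $\sigma^n\alpha=\sigma^{n'}\alpha$, so $\alpha$ is preperiodic.

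\emph{Part 5).} Every $p\in P$ has an outgoing edge in ${\mathcal G}_P$, because it has an address by Proposition \ref{infwalks}. Hence $p=S_j(q)$ for some $q\in P$, and $P\subset\bigcup S_i(\hga)$. The union $\bigcup S_i(\hga)$ is connected. Indeed, each critical point $c\in K_i\cap K_j$ lies in both $S_i(P)\subset S_i(\hga)$ and $S_j(\hga)$, and the intersection graph of the copies is connected because $K$ is connected. So the union is a subdendrite containing $P$, and by minimality of $\hga$ it contains $\hga$.
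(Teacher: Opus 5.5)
Your proof is correct, and for 1)--4) it follows the paper's argument. You take a deep copy $K_{\alpha_1\ldots\alpha_n}\ni x$ and find one exit point in $S_{\alpha_1\ldots\alpha_n}(P)$ for each component of $K\setminus\{x\}$. When there are at least two of them, the arcs to these exit points lie in $S_{\alpha_1\ldots\alpha_n}(\hga)$, and you finish with the finiteness of the set of ramification points of $\hga$. You also make explicit steps the paper skips:
\begin{itemize}
\item that $K_{\alpha_1\ldots\alpha_n}$ is a neighbourhood of $x$, which is what makes the exit points differ from $x$;
\item that the same $n$ serves for every larger $n$, so infinitely many predecessors are ramification points of $\hga$;
\item the separate treatment of $G_{\eS}(C)$ in 3).
\end{itemize}

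The second obstacle you name is already removed by your opening paragraph. The frontier of $K_{\bi}$ in $K$ lies in the finite union of the sets $K_{\bi}\cap K_{\bj}$ with $|\bj|=|\bi|$, $\bj\neq\bi$, hence in $S_{\bi}(P)$.

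The first obstacle is real, but it is a defect of the statement, not of your argument. The paper's ``take $n={\rm Ord}(x,K)$'' silently needs $n\ge 2$, since a single arc $\gamma(x,y_1)$ need not lie in $S_{\bi}(\hga)$. In fact 2) fails for end points outside $G_{\eS}(\hga)$. Take the Vicsek fractal: the homotheties of ratio $1/3$ centred at the corners and the centre of $[0,1]^2$, so $\partial K$ is the set of corners and $\hga$ is the union of the diagonals. The point $x=(1/4,1)$ has the single address $\overline{12}$, where $1,2$ index the maps centred at $(0,1)$ and $(1,1)$. Its predecessors are $(1/4,1)$ and $(3/4,1)$, neither on a diagonal, so $x$ lies in no $S_{\bi}(\hga)$, although ${\rm Ord}(x,K)=1$ by 3). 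Restricting 2) to cut points, as you propose, is the right fix, and it suffices for 3) and 4).

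For 5) your route is different.
\begin{itemize}
\item The paper argues pointwise. A point $x\in\hga\cap K_i$ lies on a main subarc $\gamma(p_j,p_k)$, and the part of this arc inside $K_i$ runs between points of $S_i(P)$, so it is a piece of $S_i(\hga)$.
\item You argue globally. You show $P\subset\bigcup_i S_i(\hga)$, and this union is connected through the critical points because the intersection graph of the first-order copies is connected. Minimality of $\hga$ then finishes.
\end{itemize}
The paper's version gives directly the sharper inclusion $\hga\cap K_i\subset S_i(\hga)$, which it uses later for $S_i(\hga_i)=\hga\cap K_i$. The price is unstated bookkeeping where the arc crosses $\partial K_i$, for instance that $P\cap K_i\subset S_i(P)$. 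Yours needs no arc-tracking, and the sharper inclusion follows from it in one line: a point of $K_i\cap S_j(\hga)$ with $j\neq i$ lies in $K_i\cap K_j\subset S_i(P)$.
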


\begin{proof} 1) Let  $x \in K\setminus G_{\eS}(C)$.  Let $Q_1,... Q_n$ be the connected components of $K \setminus \{x\}$. 
For each $k=1,...,n$ take  $x_k \in Q_k\setminus\{x\}$. There is ${\bf {i}}=i_1...i_n\in I^*$ such that
$x \in K_{\bf {i}}$ and
each $x_k\in K\setminus K_{\bf {i}}$. For each $k$ there is a unique arc $\gamma_k=\gamma(x,x_k){\subset} K$.

Note that $\gamma_k\setminus\{x\}{\subset} Q_k$ and
$\gamma_k\cap {\partial} K_{\bf {i}}=\{y_k\}$, where $y_k\neq x$. For different $k,l$,  
$\gamma(x,y_k)\cap \gamma(x,y_l)=\{x\}$, and all these arcs are  subarcs of
$S_{\bf {i}}(\hga)$. Therefore, $n\le\#P$.

To prove 2), take
$n={\mathop{\rm Ord} \nolimits}(x,K)$. Then ${\mathop{\rm Ord} \nolimits}(x,K)={\mathop{\rm Ord} \nolimits}(x,S_{\bf {i}}(\hga))$.

3) follows from 2). At the same time, $G_{\eS}(\hat \gamma){\supset} G_{\eS}({\partial} K)$; some points of the last set are the endpoints of $K$.

4) Since  $x\notin  G_{{\eS}}(C)$, the point $x$ has a unique address. 
By 2), there is a copy $K_{\bf {j}}$ such that  ${\mathop{\rm Ord} \nolimits}(x,K)={\mathop{\rm Ord} \nolimits}(x,S_{\bf {j}}(\hga))$. 
If $x$ is a ramification point,  its predecessor $ S_{\bf {j}}^{-1}(x)$ 
is a ramification point of $\hga$ and lies in $\hga\setminus G_{{\eS}}(C)$.
Since the  number of  ramification points of $\hga$ is finite,  each ramification point $x\in K\setminus G_{{\eS}}({\partial} K)$ has a preperiodic address. 

5) Let $x\in\hga$ and $x\in S_i(K)$. If $x\notin {\partial} K_i$, there is a main subarc $\gamma(p_j,p_k)\ni x$. If any of these two points, say $p_j$, does not belong to ${\partial} K_i$, then $\gamma(x,p_j)\cap {\partial} K_i$ is a point  $\tilde p_j\in {\partial} K_i$. Thus, the arc $\gamma(p_j,p_k)\cap K_i$ is a  main subarc in  $K_i$. 
\end{proof}

To evaluate the order of the boundary points of $K$, we consider the sets of  addresses of these points.
For any $p\in {\partial} K$, the order ${\mathop{\rm Ord} \nolimits}(p,K)$ is equal to $\#\pi^{-1}(p)$ if the set $\pi^{-1}(p)$ is finite, and this order is infinite if $\pi^{-1}(p)$ is infinite. By Lemma \ref{ordpi},  $\pi^{-1}(p)=\hat\varphi( \Omega(p))$.

 \begin{definition}\label{cycles}\cite{BG} Let ${{\mathcal G}}$ be a finite digraph and $p, p_1$ be vertices in ${{\mathcal G}}$.
 
We write $p\prec p_1$ if $p$ is a predecessor of the vertex $p_1$ and $p_1$ is not a predecessor of the vertex $p$ in the digraph ${{\mathcal G}}$.
 
For a cycle $\sigma$ in ${{\mathcal G}}$ that does not contain a vertex $p$, we write $p\prec\sigma$, if there is a vertex $p_1$ in $\sigma $ such that $p \prec p_1$ and   $p_1\not\prec p$. We write $p\preceq\sigma$ if $p\prec\sigma$ or $p\in\sigma$.
    
For a pair  $\sigma_1,\sigma_2$ of disjoint cycles, we write $\sigma_1\prec \sigma_2$, if there are $p_1$ in $\sigma_1$ and $p_2$ in $\sigma_2$ such that $p_1\prec p_2$.
  
If a vertex $p_1$ in $\sigma_1$   is a predecessor of a vertex $p_2$ in $\sigma_2$ and $p_2$ is  a predecessor of the vertex $p_1$, then $p_1$ or $\sigma_1$ and  $p_2$ or $\sigma_2$ are {\em linked}.
  
If $\sigma_1\not\prec \sigma_2$ and  $\sigma_2\not\prec\sigma_1$, the  cycles $\sigma_1$ and $\sigma_2$ are {\em independent}. 
\end{definition}
   
Note that if a cycle $\sigma$ and a vertex $p$ outside $\sigma$ are linked, then there is a cycle $\sigma'$ that contains the vertex $p$ and some vertex $p'$ from the cycle $\sigma$, and therefore $\sigma$ and $\sigma'$ are linked. \\
If some cycles   $\sigma$ and $\sigma'$ are linked, then for any point $p$ in $\sigma$, the set of all walks  $\omega(p,...)$ that contain the points from $\sigma$ and $\sigma'$, is uncountable.
  
\begin{proposition}\label{numwalks}
Let ${{\mathcal G}}_P=(P,{{\mathcal E}}, \hat\varphi)$ be the index diagram of  $\Gamma({\eS})$ and $p\in P$.
\begin{itemize}
\item[1)] If all the cycles $\sigma_k$ in ${{\mathcal G}}_P$ for which $p\prec\sigma_k$, are independent, then all infinite walks $\omega(p,...)$ in  ${{\mathcal G}}_P$ are preperiodic and the set $\Omega(p)$ is finite for any $p\in P$. There is a uniform bound $M$ for $\#\Omega(p), p\in P $.
\item[2)] If there are cycles  $\sigma_1$ and $ \sigma_2$ in ${{\mathcal G}}_P$ such that $p\preceq \sigma_1$  and  $\sigma_1\prec \sigma_2$, then the set $\Omega(p)$ is infinite.
\item[3)] If $p\preceq\sigma_0$, $p\preceq\sigma_1$ and $\sigma_0,\sigma_1$ are linked, then $\Omega(p)$ is uncountable.
\item[4)] If $p\notin\sigma$ and  $p$ is linked to $\sigma$, then $\Omega(p)$ is uncountable.
\end{itemize}
\end{proposition}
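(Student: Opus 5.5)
The plan is to reduce all four statements to the structure of the strongly connected components of the finite digraph ${{\mathcal G}}_P$, using only the finiteness of $P$. I also use the fact, established after the definition of the index diagram, that the labels of edges going out of a vertex are distinct. This fact makes $\hat\varphi$ injective on $\Omega(p)$, so counting walks is the same as counting addresses. First I would record the elementary dichotomy for a strongly connected component $Q$ of ${{\mathcal G}}_P$. Either $Q$ carries exactly one simple cycle, meaning every vertex of $Q$ has exactly one out-edge inside $Q$, or $Q$ contains two distinct cycles through a common vertex. In the second case these cycles are linked in the sense of Definition \ref{cycles}. Independence of all cycles reachable from $p$ rules out two distinct cycles in the same component reachable from $p$. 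It also rules out two reachable cycles in different components with one reachable from the other. So the reachable part of ${{\mathcal G}}_P$ from $p$ has components that are single vertices or single cycles. Moreover, no reachable cycle can reach another cycle.

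For 1), take an infinite walk $\omega(p,\dots)$. It eventually stays in one strongly connected component, since the condensation of ${{\mathcal G}}_P$ is acyclic and finite. That component must be a cycle $\sigma$. Before entering $\sigma$ the walk passes through acyclic components only, so it has length at most $\#P$. Inside $\sigma$ each vertex has a unique out-edge in $\sigma$. The walk cannot leave $\sigma$ and later re-enter a cycle, since $\sigma$ reaches no other cycle. The walk also cannot leave $\sigma$ and continue forever outside it, since the part of ${{\mathcal G}}_P$ reachable from $\sigma$ outside $\sigma$ is acyclic. Hence, once it enters $\sigma$, the walk is $\omega$ followed by the periodic cycling around $\sigma$, so it is preperiodic. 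The walk is determined by its prefix, which is a simple path of length at most $\#P$, and there are finitely many such paths. This gives a uniform bound $M$, for example $M\le (\max\text{outdegree})^{\#P}$, which depends only on ${{\mathcal G}}_P$. By Proposition \ref{infwalks}, $\#\Omega(p)=\#\pi^{-1}(p)$.

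For 2), I would fix a walk from $p$ to a vertex $q_1\in\sigma_1$, and a walk from some $q_1'\in\sigma_1$ to $q_2\in\sigma_2$. For each $n$ the following is an infinite walk in $\Omega(p)$: go from $p$ to $\sigma_1$, go around $\sigma_1$ $n$ times, pass to $\sigma_2$, and cycle in $\sigma_2$ forever. I must check that these walks are pairwise distinct. Since $\sigma_1\prec\sigma_2$ with $p_2\not\prec p_1$, $\sigma_2$ cannot return to $\sigma_1$. Walks with different $n$ leave $\sigma_1$ at different times, so they differ as sequences of edges.

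For 3) and 4), the key observation is that linked cycles, or a vertex linked to a cycle, produce two distinct closed walks $u,v$ based at a common vertex $q$, reachable from $p$. Neither of $u,v$ is a prefix of the other; I would take, for example, $u=\sigma_0$ traversed from $q$, and $v$ going out to $\sigma_1$ and back. In case 4) I would take the cycle $\sigma'$ through $p$ and a vertex of $\sigma$, as noted after Definition \ref{cycles}, together with $\sigma$ itself. Then every sequence in $\{u,v\}^{\mathbb N}$, prefixed by a walk from $p$ to $q$, is an infinite walk. Distinct sequences give distinct walks because $\{u,v\}$ is a prefix code. This yields continuum many elements of $\Omega(p)$.

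The main obstacle is 1): turning the combinatorial independence hypothesis into the structural claim about the strongly connected components. In particular, I have to show that two distinct cycles in one component are linked, and therefore not independent, in the sense of Definition \ref{cycles}. I would handle this carefully by producing a vertex of one cycle outside the other, or a differing edge, and then arguing via mutual reachability.
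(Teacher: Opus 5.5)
Your proposal is correct and follows essentially the paper's own route. For part 1 you use a finite prefix followed by a forced periodic tail in a single cycle, for part 2 you pump around $\sigma_1$ before passing to $\sigma_2$, and for parts 3 and 4 you make free binary choices between two closed walks reachable from $p$; the condensation and prefix-code formulation makes rigorous what the paper only sketches.

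One point to adjust: ``linked, and therefore not independent'' cannot be derived from Definition \ref{cycles} as written. Mutually reachable cycles satisfy neither $\sigma_1\prec\sigma_2$ nor $\sigma_2\prec\sigma_1$, and cycles through $p$ are excluded from $p\prec\sigma_k$. So a vertex on two distinct cycles, with no cycle strictly downstream, satisfies the literal hypothesis of part 1 vacuously, while $\Omega(p)$ is uncountable by part 3. You should state openly that you read the hypothesis of part 1 as ``no two cycles reachable from $p$ are comparable or linked''; this is exactly the reading the paper's proof tacitly uses.
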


\begin{figure}[htp]
\centering
\includegraphics[width=.9\linewidth]{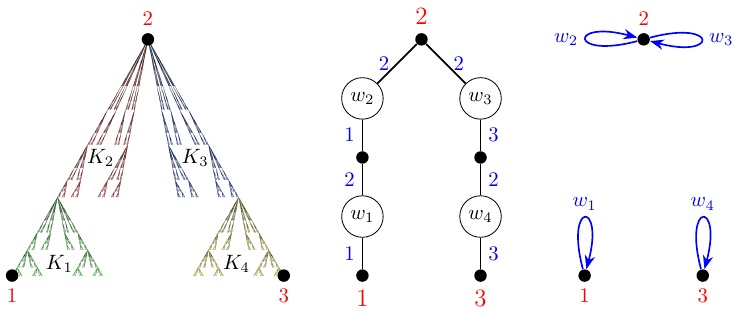}
\caption{A self-affine dendrite $K$ (left), its $P$-sprout (center) and the index diagram (right). The set of addresses of $p_2 \in \partial K$ in uncountable by  Proposition \ref{numwalks}/3. Each of the points $p_1, p_3 \in \partial K$ has a  unique address by   Proposition \ref{numwalks}/1. }
\label{fig:infram_index}
\end{figure}

\begin{proof} 
Each infinite walk $\omega=\omega(p_0,...)$ in ${{\mathcal G}}$ contains some cycle $\sigma$. If $\omega$ leaves $\sigma$ at some point $p$, then its subwalk $\omega'(p,...)$,  contains some cycle $\sigma'$. 
Then $\sigma\prec\sigma'$, or $\sigma$ and $\sigma'$ are linked. If several paths $\sigma_1,...\sigma_k$ are indepenent, then $\omega=\omega(p_0,\sigma_i)$ for some $i$.

If $p\prec  \sigma$ and there is no cycle $\sigma'$ such that $p\prec \sigma'\prec \sigma$ or $p\prec \sigma\prec \sigma'$, then each infinite walk $\omega(p,\sigma)$  is a composition of a path  $\omega_0(p,p')$ for some point $p'\in\sigma$ and an infinite walk $\omega_1(p',...)$    in $\sigma$. There is a finite number of possible choices of $\omega_0(p,p')$ for a given $p'$ and of  possible choices of $p'\in \sigma$. 
Applying the argument to each of the cycles $\sigma_k$, we obtain 1).

Let $p\preceq\sigma_1\prec\sigma_2$.  Then for each path $\omega(p,\sigma_2)$
 that has a non-empty intersection with $\sigma_1$, there are the vertices $p_1, p_2$ in $\sigma_1$ and a vertex $p_3$ in $\sigma_2$ such that $\omega(p,\sigma_2)$ is a composition of 4 paths: 1. $\omega(p,p_1)$;  2. a   path $\omega(p_1,p_2)$ in the cycle $\sigma_1$;  3. $\omega(p_2,p_3)$; and 4. an infinite path $\omega(p_3,...)$ in the cycle $\sigma_2$. There are infinitely many choices for $\omega(p_1,p_2)$ and $\omega(p_2,p_3)$, which proves 2).

If $\sigma_0$ and $\sigma_1$ are linked, take two vertices $q_i$ , $i=0,1$ that are contained in $\sigma_i$ and not  in $\sigma_{1-i}$.
For any infinite sequence $\alpha=i_1i_2...$ of digits 0 and 1 there are the paths $\omega(p,q_{i_1})$, $\omega(q_{i_k},q_{i_{k+1}}), k\in \mathbb{N}$, that are contained in $\sigma_{i_{k}}$ if $i_k=i_{k+1}$.

The composition of these paths is an infinite path $\omega_\alpha(p,...)$ in ${{\mathcal G}}$. For different values of $\alpha$ the paths $\omega_\alpha(p,...)$ are different, therefore, the set $\Omega(p)$ is uncountable.

The same argument applies to Case 4).
\end{proof}

Note that a regular sprout can be realized as a self-similar set in $\mathbb R^n$ only in Case 1).
But there is an example of a self-similar dendrite in the Hilbert space \cite{AK2025}, containing a boundary point $p$ with an infinite set $\Omega(p)$ and corresponding to an irregular sprout.

\begin{theorem}\label{inf_ram}
\qquad
\begin{itemize} 
\item[1)] If the index diagram of a sprout $\Gamma$ does not contain cyclic  vertices of outdegree $\ge 2$,
then the  orders of the points of $K$ are uniformly bounded.
\item[2)] If for some vertex $p \in  P $ there is a walk $\omega(p,..)$ such that all  vertices $p_i\in \omega(p,..)$ have  outdegree 1, then all  vertices $p_i\in \omega(p,..)$ have a unique preperiodic address.
\item[3)] Let $p$ be a common vertex of  two cycles  of the index diagram ${{\mathcal G}}_P$. Then, for any vertex $p'\prec p$, the set of infinite paths is uncountable.
\end{itemize}
\end{theorem}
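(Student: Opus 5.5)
We treat the three parts separately, reducing each to the walk-counting of Proposition \ref{numwalks} and the order estimates of Lemma \ref{ordpi} and Proposition \ref{ord_cut}. Throughout we use Proposition \ref{infwalks}, which identifies $\pi^{-1}(p)$ with $\hat\varphi(\Omega(p))$, together with the remark before Definition \ref{cycles}: for $p\in\partial K$, ${\rm Ord}(p,K)$ equals $\#\pi^{-1}(p)$ when this set is finite, and is infinite otherwise.

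For 1), first show that if no cyclic vertex has outdegree $\ge 2$, then all cycles reachable from any $p$ are independent. Suppose $p\preceq\sigma_1\prec\sigma_2$ or $\sigma_1,\sigma_2$ are linked. Then some vertex of $\sigma_1$ has an outgoing edge that leaves $\sigma_1$ (toward $\sigma_2$) in addition to its cycle edge. That vertex is cyclic and has outdegree $\ge 2$, a contradiction. Hence, by Proposition \ref{numwalks}/1, every $\Omega(p)$ is finite with a uniform bound $M$, so $\#\pi^{-1}(p)\le M$ for $p\in P$. Now bound the orders of all points of $K$.
\begin{itemize}
\item[(a)] For $x\notin G_{\eS}(C)$, Proposition \ref{ord_cut}/1 gives ${\rm Ord}(x,K)\le\#P$.
\item[(b)] For $x\in G_{\eS}(C)$, write $x=S_{\bf j}(c)$ with $c\in C$, where ${\bf j}$ is chosen minimal, i.e. $x$ has a predecessor at which the copies first split. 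Take $n$ large, as in the proof of Proposition \ref{ord_cut}/1. Each component of $K\setminus\{x\}$ meets one of the finitely many level-$n$ copies $K_{\bf i}\ni x$. Each such copy contributes at most ${\rm Ord}(S_{\bf i}^{-1}(x),K)$ components, and $S_{\bf i}^{-1}(x)$ lies in $P$ when $x\in\partial K_{\bf i}$.
\item[(c)] The number of copies $K_{\bf i}$ containing $x$ equals $\#\pi^{-1}(x)$ truncated at level $n$. This is at most $\#\pi^{-1}(x)\le \#C\cdot\max_{p}\#\Omega(p)\le \#C\cdot M$, since the addresses of $x$ are prefixes followed by addresses of boundary points.
\end{itemize}
Together these give a bound of the form $\#C\cdot M\cdot\max(M,\#P)$. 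The main obstacle is step (b): making the local decomposition of $K\setminus\{x\}$ rigorous. I would mimic Lemma \ref{ordpi}, using SIP so that distinct copies through $x$ meet only at $x$ and hence separate components.

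For 2), suppose every vertex along $\omega(p,\dots)$ has outdegree 1. Then from each $p_i$ on the walk the continuation is forced, so $\Omega(p_i)$ consists of a single walk. Since $P$ is finite, the vertices must repeat, and the walk is eventually periodic. By Proposition \ref{infwalks}, $\pi^{-1}(p_i)=\hat\varphi(\Omega(p_i))$ is therefore a single preperiodic address.

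For 3), let $\sigma_0\neq\sigma_1$ be cycles through $p$. Concatenating arbitrary sequences of loops $\sigma_{i_1}\sigma_{i_2}\cdots$ based at $p$ gives distinct infinite walks. They are distinct because $\sigma_0$ and $\sigma_1$ differ in some edge, and by the injectivity of $\hat\varphi$ on outgoing edges (shown in Section 3) the label strings differ as well. This yields $2^{\aleph_0}$ walks, as in the proof of Proposition \ref{numwalks}/3. For $p'\prec p$, prefix a fixed walk $\omega(p',p)$, which preserves distinctness. Hence $\Omega(p')$ is uncountable.
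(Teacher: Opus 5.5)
Parts 2) and 3) are fine and match the paper. In 3), take $\sigma_0,\sigma_1$ to be first-return loops at $p$, so that neither is a prefix of the other; that is what makes distinct loop sequences give distinct walks. Part 1) follows the paper's outline. Your observation that an exit from a cycle would create a cyclic vertex of outdegree $\ge 2$ correctly supplies the independence hypothesis of Proposition \ref{numwalks}/1, which the paper leaves implicit.

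The gap is where you turn address counts into orders at boundary points. In (b) you bound $\Ord(S_{\bi}^{-1}(x),K)$, with $S_{\bi}^{-1}(x)\in P$, by $M$. For this you rely on the remark that $\Ord(p,K)=\#\pi^{-1}(p)$ for $p\in\partial K$. Lemma \ref{ordpi} only gives $\Ord(p,K)\ge\#\pi^{-1}(p)$, and the equality is false. By Theorem \ref{ord_b}, 2), a boundary point with a single address can be a cut or ramification point of $\hga$. In Fig.~\ref{fig:hex_polygon2}, $p_3$ has the unique address $\overline{3}$ but $\Ord(p_3,\hga)=3$, so $\Ord(p_3,K)\ge 3$. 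The paper's proof of 1) makes the same identification (``by Proposition \ref{numwalks}, for any $p\in\partial K$, $\Ord(p,K)\le M$''). So you inherited the gap, but it still has to be closed.

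The repair is the exit-point argument of Proposition \ref{ord_cut}, 1), which works for every $x$ and makes (b) unnecessary.
\begin{itemize}
\item[(i)] Choose $x_k$ in each component of $K\setminus\{x\}$, and $n$ so large that the level-$n$ copies containing $x$ miss every $x_k$.
\item[(ii)] Let $U$ and $V$ be the unions of the level-$n$ copies that do, respectively do not, contain $x$. The first point of $\gamma(x,x_k)$ lying in $V$ belongs to $U\cap V\subset\bigcup_{K_{\bi}\ni x}S_{\bi}(P)$ and differs from $x$.
\item[(iii)] Distinct $k$ give distinct points, since these arcs meet only at $x$. Hence $\Ord(x,K)\le\#P\cdot\#\pi^{-1}(x)$ for all $x\in K$, so a uniform bound on $\#\pi^{-1}(x)$ suffices.
\end{itemize}

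Step (c) also needs a correction. With $\bj$ minimal, $x$ lies in exactly one level-$|\bj|$ copy, namely $K_{\bj}$. So every address of $x=S_{\bj}(c)$ has the form $\bj a\beta$ with $c\in K_a$ and $\beta\in\pi^{-1}(S_a^{-1}(c))$. Thus $\#\pi^{-1}(x)\le\deg(c,\Gamma)\,M\le mM$. The factor $\#C$ you wrote is not the right count, since one critical point may lie in several first-level copies. Together with $\#\pi^{-1}(x)=1$ off $G_{\eS}(C)$, this gives $\Ord(x,K)\le mM\,\#P$ for all $x\in K$.
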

  
\begin{proof} 
1) By Proposition \ref{numwalks},  for any $p\in {\partial} K$, ${\mathop{\rm Ord} \nolimits}(p,K)\le M$.
For any $x\in {\mathcal C}$, ${\mathop{\rm Ord} \nolimits}(x,K)\le m M$.
For any $x\in G_{\eS}({\partial} K)$, there is a multiindex ${\bf {i}}\in  I^*$ of maximum length such that  $x\in K_{\bf {i}}\setminus {\partial} K_{\bf {i}}$. Then $x=S_{\bf {i}}(y)$ for some 
$y\in{\mathcal C}$. Therefore, $\#\Omega(x)=\#\Omega(y) \le mM$. 

2) Since the outdegree of any vertex in $\omega(p,..)$ is 1, there is a cycle $\sigma$ such that $\omega(p,..)=\omega(p,\sigma)$, so $\#\Omega(p)=1$.

3) By Proposition \ref{numwalks},  the set $\Omega(p)$ is uncountable. 
Therefore, for any $p'\prec p$  the set  $\Omega(p')$ is uncountable.
\end{proof} 

\section{Finding the order of a point $x\in\hga$ with respect to the main tree.}

In the following, we  assume that $\Gamma=\Gamma(W,B,E,\varphi)$ is a {\em regular} $P$-sprout   with the sets of white vertices $W=\{w_1,..,w_m\}$, of indices $I=\{1,..,m\}$,  of black vertices $b\in B$ and of boundary points $p\in P$.\\

For each subset $P'$ of the set $P$, there is the smallest subtree  $\gamma(P') \subset \hga$ that contains the subset $P'$. 
The set of all the  subtrees  $\gamma(P')\subset \hga$ will be denoted by $\mathcal{P}(\gamma)$ or simply by $\mathcal{P}$.\\
The smallest subtree of the graph $\Gamma=\Gamma(P)$ that contains the subset $P'$ will be denoted by $\Gamma(P')$.
A subset $P'\IN P$ is {\em full} if $\gamma(P') \cap P=P'$, which is equivalent to $\Gamma(P') \cap P=P'$. By default, we consider only full subsets $P'\IN P$.

{\bf Definition of the mapping $\phi_i$.} Given $w_i\in  W$, we define the mapping $\phi_i:P\to P$ as follows.
For any   $p\in P$  there is a unique path $\beta(p,w_i)$  in $\Gamma$ with endpoints $p$ and $w_i$.
This path is an alternating sequence of  black and white vertices  $b_{i_0} w_{j_0}....b_{i_k} w_{j_k} $ 
 where $b_{i_0}=p$ and $w_{j_k}=w_i$,  such that each two consecutive elements are the endpoints 
 of some edge in $\Gamma$. Let the last edge of this path be $e=(b_{i_k} w_{j_k})$   and  let $\varphi(e)$ be the label on $e$. Then  put $\phi_i(p):=\varphi(e)$.\\
 
 There may be a degenerate case, where some $p_k\in P$ is incident to $w_i$, then $p_k\in K_i$ and $\phi_i(p_k)=S_i^{-1}(p_k)$.\\

We use the notation $\#_c(A)$ for the cardinality of the set of connected components of a set  $A$.\\

\begin{lemma}\label{degwi}
For any $i\in I$,  $\#\phi_i(P)= \deg(w_i, \Gamma)$ and
$$\#_c(K\setminus K_i)=\#_c(\hga\setminus K_i)=\#\phi_i(P) -\#(K_i\cap \partial K). 
$$
\end{lemma}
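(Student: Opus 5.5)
We prove the first equality inside the tree $\Gamma$ and then turn to the topology of $K$.

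\emph{The equality $\#\phi_i(P)=\deg(w_i,\Gamma)$.} Each path $\beta(p,w_i)$ ends with an edge $e\in E(w_i)$, and $\phi_i(p)=\varphi(e)$. Since $\varphi|_{E(w_i)}$ is injective, $\#\phi_i(P)$ equals the number of edges of $E(w_i)$ that occur as the last edge of some path $\beta(p,w_i)$, $p\in P$. Removing $w_i$ from the tree $\Gamma$ leaves exactly $\deg(w_i,\Gamma)$ components, one for each edge $e=(b,w_i)$. The path $\beta(p,w_i)$ ends with $e$ exactly when $p$ lies in the component containing $b$. By the Remark after the definition of a regular sprout, every such component contains a point of $P$. (When $b\in P$, the component contains $b$ itself, which is the degenerate case $p=b$.) Hence every edge of $E(w_i)$ is realized, and the equality follows.

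\emph{Matching components of $\Gamma\setminus w_i$ with pieces of $K$.} Let $b_1,\dots,b_d$ be the black neighbours of $w_i$, so $d=\deg(w_i,\Gamma)$. These are precisely the points of $\partial K_i$ together with the points of $K_i\cap\partial K$. Moreover $b_k=S_i(\phi_i(p))$ for any $p$ in the $k$-th component of $\Gamma\setminus w_i$.

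\emph{Components of $K\setminus K_i$.} For each $k$, let $U_k$ be the union of $\{b_k\}$ and all copies $K_j$ with $w_j$ in the $k$-th component, minus $b_k$. I would show that the $U_k$ are exactly the components of $K\setminus K_i$. Each $U_k$ is connected: the copies in the component form a connected chain through shared intersection points, and it is nonempty only when $b_k$ has a white neighbour other than $w_i$. Different $U_k$ are disjoint and separated, because two copies meet only at a black vertex adjacent to both of them, and $\Gamma$ is acyclic.

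\emph{Subtracting boundary points.} A component is empty exactly when $b_k$ is a leaf of $\Gamma$ adjacent only to $w_i$. Then $b_k$ is not critical, so $b_k\in P$, that is, $b_k\in K_i\cap\partial K$. Conversely, if $b_k=p\in K_i\cap\partial K$ is adjacent to some other $w_j$, then $p$ is critical. I would need to rule this out, or else count such points correctly. The intended reading is that the points of $K_i\cap\partial K$ contribute nothing to $K\setminus K_i$. I expect this counting of boundary points lying in $K_i$ to be the main obstacle. I would handle it through the index-diagram convention: a point of $P$ adjacent to $w_i$ in the degenerate case is treated as a leaf.

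\emph{Passing to $\hga$.} For the equality with $\#_c(\hga\setminus K_i)$, the main tree is the union of the arcs $\gamma(p,q)$ with $p,q\in P$. Each nonempty $U_k$ contains a point of $P$, by the Remark. The arc from that point to $b_k$ lies in $\hga\cap\overline{U_k}$, so $\hga$ meets every $U_k$. Since $\hga\cap U_k$ is a union of subarcs ending at $b_k$, it is connected. This gives a bijection between the components of $\hga\setminus K_i$ and the nonempty $U_k$, which completes the proof.
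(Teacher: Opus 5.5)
Your proof of $\#\phi_i(P)=\deg(w_i,\Gamma)$ is correct and is the paper's argument. Your plan for the second equality is also the paper's: one piece of $K\setminus K_i$ per component of $\Gamma\setminus\{w_i\}$, with leaf edges at boundary points contributing nothing. The step that fails is the claim that each $U_k$ is connected, and likewise that $\hga\cap U_k$ is connected. Your chain argument only shows that $\overline{U_k}$ is connected, and a union of arcs ending at $b_k$ need not stay connected once $b_k$ is removed. There are two ways this happens.
\begin{enumerate}
\item If $\deg(b_k,\Gamma)\ge 3$, the copies in the $k$-th component split into $\deg(b_k,\Gamma)-1$ groups, one for each white neighbour of $b_k$ other than $w_i$. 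By acyclicity of $\Gamma$ these groups pairwise meet only at $b_k\in K_i$. So each group minus $b_k$ is relatively clopen in $K\setminus K_i$. By regularity each group contains a boundary point, so $\hga\setminus K_i$ splits the same way.
\item Even when $b_k$ lies only in $K_i$ and $K_j$, the set $\overline{U_k}\setminus\{b_k\}$ has $\Ord(S_j^{-1}(b_k),K)$ components. This exceeds one whenever $S_j^{-1}(b_k)$ is a cut point of $K$.
\end{enumerate}

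This cannot be repaired, because the displayed equality is false in such cases. Let $e_1,e_2$ be unit vectors at angle $60^\circ$ and $\Delta$ the triangle with vertices $0,6e_1,6e_2$. Take the twelve homotheties $x\mapsto x/6+ae_1+be_2$ with $(a,b)\in\{(0,0),(1,0),(2,0),(2,1),(1,2),(2,2),(0,3),(0,4),(0,5),(3,2),(4,1),(5,0)\}$. The small triangles meet only at vertices, the incidence graph is a tree, $\partial K=\{0,6e_1,6e_2\}$, and the sprout is regular. The copy $K_{(2,1)}$ has degree $2$ and contains no boundary point, so the right-hand side equals $2$. But $2e_1+2e_2\in K_{(2,1)}\cap K_{(1,2)}\cap K_{(2,2)}$. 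Hence both $K\setminus K_{(2,1)}$ and $\hga\setminus K_{(2,1)}$ have at least three components, one containing each of $0$, $6e_2$, $6e_1$. The paper's own proof makes the same tacit one-component-per-edge assumption.

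Your paragraph on subtracting boundary points is a second genuine gap, and a convention cannot close it. Regular sprouts allow $p\in P$ with $\deg(p,\Gamma)\ge 2$; this is the paper's own vertex set $V_P$. If such a $p$ lies in $K_i$, its component of $\Gamma\setminus\{w_i\}$ is nondegenerate and contributes at least one component of $K\setminus K_i$, yet the formula subtracts $p$. Declaring $p$ a leaf just replaces $\Gamma$ by a different graph.

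What your argument does establish is a different count. The number of points at which $K\setminus K_i$ attaches to $K_i$ (the SIP boundary of $K_i$) equals $\deg(w_i,\Gamma)$ minus the number of leaves of $\Gamma$ adjacent to $w_i$. The stated equalities follow from this only under extra hypotheses, for instance:
\begin{enumerate}
\item every critical point in $K_i$ lies in exactly two first-level copies;
\item no point of $K_i\cap\partial K$ is critical;
\item for each $b\in K_i\cap K_j$, the point $S_j^{-1}(b)$ is an endpoint of $K$ (for the $\hga$ version, an endpoint of $\hga_j$).
\end{enumerate}
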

\begin{proof}
By its definition, $\phi_i(P)$ is the set of labels on the edges, incident to $w_i$. Since $\Gamma$ is regular, 
$\Gamma\setminus \{w_i\}$ is a disjoint union of components $C_k$, each containing one of the edges $e_k$ and a nonempty subset $P_{e_k}\IN P$ such that $P=\bigsqcup P_{e_k}$.

 In the  degenerate
case, a component $C_k$ contains only an edge $e_k=(p_k,w_i)$ and the boundary point $p_k$, and $P_{e_k}=\{p_k\}$. In that case $p_k\notin K\setminus K_i$ and $p_k\notin \hga\setminus K_i$.

If $C_k$ is non-degenerate $p_k\notin P_{e_k}$, there is a component of $\hga\setminus K_i$ and a component of   $K\setminus K_i$ that contain $P_{e_k}$ and $S_i(\phi_i(p))$.
    
\end{proof}

\begin{figure}[htp]\label{Fy1} 
\centering
\includegraphics[width=.80\textwidth]{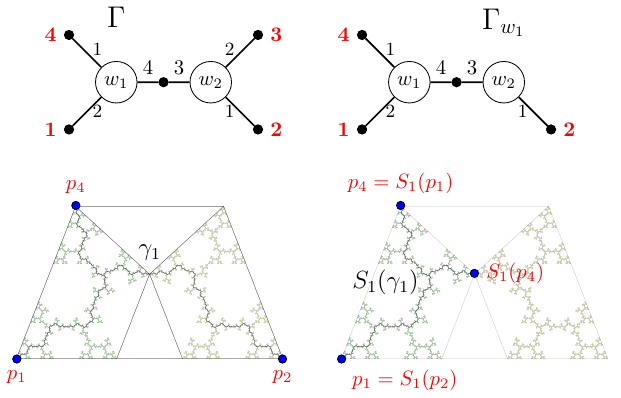}
\caption{}
\end{figure}

For the transformation $\phi_i:P\to P$,  denote the set $\phi_i(P)$ by $P_i$,  the subgraph $\Gamma(P_i)$ by $\Gamma_i$ and  the subtree $\gamma(P_i)$ by $\hga_i$.\\

Notice that $\Gamma_i$ is the minimal subgraph of $\Gamma$ that contains $P_i$ and $\hga_i$ is a minimal topological subtree of $K$ that contains $P_i$.\\

{\small For the sprout $\Gamma$ in Figure \ref{Fy1},  $\phi_1 = \begin{pmatrix}
1 & 2 & 3 & 4  \\
2 & 4 & 4 & 1
\end{pmatrix} $ and $\phi_1(P) = \{p_1,p_2,p_4\}$. We show $\Gamma_1$ in the top-right, $\hga_1$ in  the bottom-left, and  
$S_1(\hga_1) = K_1 \cap \hat{\gamma}$ in  the bottom-right of the Figure.}

The definition of   map $\phi_i$ has an equivalent formulation in terms
of subarcs of the main tree $\hga$, which easily extends to the maps $\phi_{\bf j}$ for any ${\bf j}\in I^*$.

\begin{lemma}
Let $\gamma(p, K_i)$  be the
 minimal arc connecting $p$ and $S_i(K)$ in  $K$.  
 Then its endpoint in $K_i$,  i.e. $\gamma(p, K_i)\cap\partial K_i$, is $S_i(\phi_i(p))$. \hfill $\square$
\end{lemma}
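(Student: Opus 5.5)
We compare the path $\beta(p,w_i)$ in the sprout $\Gamma$ with the arc $\gamma(p,K_i)$ in $K$, vertex by vertex. We use that $\Gamma$ is the intersection graph of the SIP system $\{K_j\}$ and that $K$ is a dendrite. Write $\beta(p,w_i)=b_{i_0}w_{j_0}b_{i_1}w_{j_1}\dots b_{i_k}w_{j_k}$ with $b_{i_0}=p$ and $w_{j_k}=w_i$. The last edge is $e=(b_{i_k},w_i)$, and its label $\varphi(e)=p'$ satisfies $S_i(p')=b_{i_k}$ by the definition of the sprout of an IFS. So the claim reduces to showing that the endpoint of $\gamma(p,K_i)$ in $K_i$ is the point $b_{i_k}\in K_{j_{k-1}}\cap K_i$. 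If $p\in K_i$ (the degenerate case $k=0$), the minimal arc is the single point $p=S_i(\phi_i(p))$, and there is nothing to prove.

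\textbf{Step 1: build a connected set along the path.} Consider $A=K_{j_0}\cup K_{j_1}\cup\dots\cup K_{j_{k-1}}$. Consecutive copies $K_{j_{l-1}}$ and $K_{j_l}$ meet at $b_{i_l}$, so $A$ is connected. It also contains $p\in K_{j_0}$ and the point $b_{i_k}\in K_{j_{k-1}}\cap K_i$.

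\textbf{Step 2: show that $A$ meets $K_i$ only at $b_{i_k}$.} For $l<k$, the copy $K_{j_l}$ cannot meet $K_i$ at any point other than $b_{i_k}$. Otherwise that intersection point would be a black vertex adjacent to both $w_{j_l}$ and $w_i$, and together with the path it would create a cycle in the acyclic graph $\Gamma$. This uses that $\Gamma$ records all pairwise intersections, which holds because ${\mathcal C}\subset B$.

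\textbf{Step 3: conclude.} Inside the dendrite $K$, the arc $\gamma(p,b_{i_k})$ lies in the connected set $A$, since arcs in a dendrite are unique and a connected subcontinuum contains the arc between any two of its points. Because $A\cap K_i=\{b_{i_k}\}$, this arc meets $K_i$ only at its endpoint. Hence $\gamma(p,b_{i_k})$ is the minimal arc $\gamma(p,K_i)$, by the uniqueness in Definition \ref{minarc}. Its endpoint in $K_i$ is $b_{i_k}=S_i(\varphi(e))=S_i(\phi_i(p))$, and it lies in $\partial K_i$ because it also belongs to another copy.

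\textbf{Main obstacle.} The delicate step is Step 2: ruling out a meeting of $K_i$ with some earlier copy, or passing through $K_i$ before $b_{i_k}$. This requires a careful acyclicity argument in $\Gamma$ together with the fact that in a dendrite a union of subcontinua forming a tree pattern of single-point contacts cannot contain an arc that leaves and re-enters a copy. Everything else is formal.
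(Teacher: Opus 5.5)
Your proof is correct and is essentially the argument the paper leaves implicit, since it states this lemma without proof. The path $\beta(p,w_i)$ in the tree $\Gamma$ gives a connected chain $K_{j_0}\cup\dots\cup K_{j_{k-1}}$ which, by SIP, acyclicity of $\Gamma$ and ${\mathcal C}\subset B$, meets $K_i$ only at $b_{i_k}=S_i(\phi_i(p))$, so the arc from $p$ to $b_{i_k}$ inside this chain is the minimal arc. Your ``main obstacle'' is already fully handled by Steps 2--3: the whole arc lies in $A$ and $A\cap K_i=\{b_{i_k}\}$, so no separate argument about an arc leaving and re-entering $K_i$ is needed.
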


 In other words, $\phi_i(p)=S_i^{-1}(\gamma(p, K_i)\cap\partial K_i)$. 
 
 Also, for any full $P'\subset P$, 
$S_i(\phi_i(P')) \subset\gamma(P')\cap \partial K_i$. In particular,

 $S_i(P_i)=\hga\cap \partial K_i=S_i(\hga_i) \cap \partial K_i$.\\

\begin{proposition}\label{fybj}
 Let ${\bf j}=j_1...j_k\in I^k$ and put $\phi_{\bf {j}}=\phi_{j_k}\cdot...\cdot\phi_{j_1}$.\\
Let $\gamma(p, K_{\bf j})$  be the
 minimal arc connecting $p$ and $S_{\bf j}(K)$ in  $K$.  \\
 Then its endpoint in $K_{\bf j}$  is $\gamma(p, K_{\bf j})\cap\partial K_{\bf j}=S_{\bf j}(\phi_{\bf j}(p))$.
 
 \end{proposition}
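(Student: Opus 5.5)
Induction on $k=|{\bf j}|$, with the preceding Lemma (the case $k=1$: the endpoint of $\gamma(p,K_i)$ in $K_i$ is $S_i(\phi_i(p))$) as the base. For the inductive step, write ${\bf j}={\bf j}'j_k$ with ${\bf j}'=j_1\dots j_{k-1}$. Then $K_{\bf j}=S_{{\bf j}'}(K_{j_k})\subset K_{{\bf j}'}$ and $\phi_{\bf j}=\phi_{j_k}\circ\phi_{{\bf j}'}$. By the induction hypothesis, the minimal arc $\gamma(p,K_{{\bf j}'})$ meets $K_{{\bf j}'}$ exactly at $q:=S_{{\bf j}'}(\phi_{{\bf j}'}(p))$. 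The case $p\in K_{\bf j}$ is handled separately: then $\gamma(p,K_{\bf j})=\{p\}$, and one checks that each $\phi$ in the chain is the degenerate case $\phi_i(p_k)=S_i^{-1}(p_k)$, so the formula still holds.

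The key step is to concatenate arcs in the dendrite. Let $p'=\phi_{{\bf j}'}(p)\in P$, so $q=S_{{\bf j}'}(p')$. By the base case applied to $p'$ and $K_{j_k}$, the minimal arc $\gamma(p',K_{j_k})$ in $K$ ends at $S_{j_k}(\phi_{j_k}(p'))$. Apply the homeomorphism $S_{{\bf j}'}$, which maps $K$ onto $K_{{\bf j}'}$ and $K_{j_k}$ onto $K_{\bf j}$. The image $S_{{\bf j}'}(\gamma(p',K_{j_k}))$ is an arc in $K_{{\bf j}'}$ from $q$ to $S_{\bf j}(\phi_{\bf j}(p))$, and it meets $K_{\bf j}$ only at its endpoint. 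Concatenate it with $\gamma(p,K_{{\bf j}'})$. The two pieces meet only at $q$, because the first lies in $K_{{\bf j}'}$ and the second meets $K_{{\bf j}'}$ only in $q$. So the union is an arc from $p$ to a point of $K_{\bf j}$.

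It remains to show that this arc meets $K_{\bf j}$ only at its final point. The second piece does so by construction. The first piece $\gamma(p,K_{{\bf j}'})$ meets $K_{{\bf j}'}\supset K_{\bf j}$ only at $q$. If $q\notin K_{\bf j}$, the first piece avoids $K_{\bf j}$ entirely. If $q\in K_{\bf j}$, the second arc is degenerate and the endpoint is $q$ itself. By uniqueness of the minimal arc (Definition \ref{minarc}, using that $K$ is a dendrite and $\{p\}$, $K_{\bf j}$ are connected), the concatenation is $\gamma(p,K_{\bf j})$. Its endpoint in $K_{\bf j}$ is $S_{\bf j}(\phi_{\bf j}(p))$, and this point lies in $\partial K_{\bf j}=S_{\bf j}(\partial K)$ when $p\notin K_{\bf j}$.

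I expect the main obstacle to be the degenerate bookkeeping. When $q$ or an intermediate point is itself in the next copy, the minimal arc is a single point. One then needs $\phi_{j_k}(p')=S_{j_k}^{-1}(\ldots)$ to hold consistently, which follows from the remark after the definition of $\phi_i$ (an edge $(p_k,w_i)$ in $\Gamma$ means $p_k\in K_i$). Equally, one must check that a point $q=S_{{\bf j}'}(p')$ with $p'\in P\cap K_{j_k}$ gives $\phi_{j_k}(p')=S_{j_k}^{-1}(p')$. The main arc argument itself is routine.
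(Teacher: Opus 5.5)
Your proof is correct and follows the paper's own argument. The paper likewise writes $\gamma(p,K_{ij})$ as the union of $\gamma(p,K_i)$ and $S_i(\gamma(\phi_i(p),K_j))$, joined at $S_i(\phi_i(p))$, and then inducts; this is your step with ${\bf j}'=i$. Your checks that the two pieces meet only at $q$, that the union meets $K_{\bf j}$ only at its endpoint (so uniqueness of the minimal arc applies), and the degenerate-case bookkeeping supply details the paper leaves implicit.
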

\begin{proof}
Let $i,j\in \{1,....,m\}$. Consider the minimal arc $\gamma(p,K_{ij})$. 
This arc is a union of minimal arcs $\gamma(p, K_i)$ and $S_i(\gamma(\phi_i(p), K_{j})) $.  
These arcs have a common endpoint $S_i(\phi_i(p))$. 
Since 
$\gamma(\phi_i(p), K_{j})\cap K_j=\{S_j(\phi_j\cdot \phi_i(p))\}$, 
the endpoints of $\gamma(p,K_{ij})$  are $p$ and $S_{ij}(\phi_j\cdot\phi_i(p))$. 
Therefore,  $\phi_{ij}=\phi_j\cdot\phi_i$.\\
By induction, we ensure that for each ${\bf {j}}=j_1...j_k$,  $\phi_{\bf {j}}=\phi_{j_k}\cdot...\cdot\phi_{j_1}$. 
For a minimal arc $\gamma(p,K_{\bf j})$ 
we obtain $\gamma(p,K_{\bf j})\cap K_{\bf j}=S_{\bf j}(\phi_{\bf j}(p))$. \end{proof}

{\bf Notation: $P_{\bf j},\Gamma_{\bf j}$ and $\hga_{\bf j}$.} For any full $P'\subset P$ and ${\bf{j}}\in I^*$,\\
$S_\bj(\phi_\bj(P'))=\gamma(P')\cap \partial K_\bj$, therefore $S_{\bf{j}}(P_{\bf{j}})=\hga \cap \partial K_{\bf{j}}$.\\
Hence, we denote the set $\phi_{\bf{j}}(P)$ by $P_{\bf{j}}$,  the subgraph $\Gamma(P_{\bf{j}})$ by $\Gamma_{\bf{j}}$, and  the subtree $ \gamma(P_{\bf{j}})$ by $\hga_{\bf{j}}$.\\
As  follows from Proposition \ref{fybj},
$P_{\bf{ij}}=\phi_{\bf{ij}}(P)=\phi_{\bf{j}}(P_{\bf{i}})$.\\

 {\bf The mapping $\phi^*_i$}.  The 
 equality $\gamma(p,K_{\bf j})\cap K_{\bf j}=S_{\bf j}(\phi_{\bf j}(p))$ leads us to introduce a  mapping
$\phi^*_i:\mathcal{P}(\gamma)\to
\mathcal{P}(\gamma)$ defined by the formula\\
$\phi^*_i(\gamma(P'))=\gamma(\phi_i(P'))$ for each  full $P'\subset P$.\\
Rewriting its right-hand side   gives
$\phi^*_i( \gamma(P'))=S_i^{-1}( \gamma(P')\cap K_i)$. Proceeding  to multiindices, we obtain that for any   $\bf {j}$,
$S_{\bf {j}}(\phi_{\bf {j}}^*(\hga))=\hga\cap K_{\bf {j}}$, or, in short,   $\phi^*_{\bf {j}}(\hga)=\hat \gamma_{\bf {j}}$.

   If $x, y \in \hat{\gamma}$ and $y =S_{\bf{j}}(x), \bf{j} \in $ $I^*$, then 
   \begin{equation}\label{eq:ord}
{\mathop{\rm Ord} \nolimits}(y,\hga) \geq {\mathop{\rm Ord} \nolimits}(y,K_{\bf{j}} \cap \hga) =  {\mathop{\rm Ord} \nolimits}(x,\phi^*_{\bf{j}}(\hga)).
\end{equation}
Therefore, for any ${\bf i,j}\in I^*$,
      $\#\phi_{\bf{i}}(P) \geq \#\phi_{\bf{ij}}(P) $.

\begin{proposition}\label{ordim}
 
  For $b\in B$,  $j\in I$, ${\bf{i}}\in I^*$, the following properties are fulfilled 
  
  1) If $\#\phi_{{\bf{i}}j}
    (P)=1$, then $w_{j} \notin \Gamma_{\bf{i}}$,

  2) If $\#\phi_{{\bf{i}}j}
    (P)=k>1$, then $\deg(w_{j},\Gamma_{\bf{i}}) = k$,

    3)  If $\deg(b, \Gamma) = k>1$, then $\Ord(b,\hga) \geq k$, 
    
4) If $\deg(b, \Gamma_{\bf{i}}) =k $, then $\Ord(y,\hga) \ge  k$, where $y = S_{\bf{i}}(b)$.
\end{proposition}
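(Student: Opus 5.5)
Throughout, the idea is to read all four items off the tree structure of $\Gamma$ and the definition of $\phi_j$ as the label on the last edge of the path $\beta(p,w_j)$. The key observation is the following: for any full $P'\subset P$, if $w_j\in\Gamma(P')$, then the edges of $\Gamma(P')$ incident to $w_j$ are exactly the edges $e$ at $w_j$ whose branch $P_e$ (the set of $p\in P$ in the component of $\Gamma\setminus\{w_j\}$ through $e$, as in Lemma \ref{degwi}) meets $P'$. Indeed, $\beta(p,w_j)$ for $p\in P_e$ ends with the edge $e$, and the minimal subtree through $P'$ passing through $w_j$ uses precisely those edges. If instead $w_j\notin\Gamma(P')$, then all $p\in P'$ lie in a single branch, so $\phi_j(P')$ is a single label. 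Since $\varphi|_{E(w_j)}$ is injective, distinct edges give distinct labels. Hence
$$\#\phi_j(P')=\#\{e\in E(w_j): P_e\cap P'\neq\varnothing\}.$$

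For 1) and 2), I apply this with $P'=P_{\bf i}$, using $\phi_{{\bf i}j}(P)=\phi_j(P_{\bf i})$ from Proposition \ref{fybj}. If $w_j\in\Gamma_{\bf i}$, then $w_j$ lies on a path between two points of $P_{\bf i}$. These two points lie in different branches at $w_j$, so $\#\phi_j(P_{\bf i})\ge 2$; this gives 1) by contraposition. If $\#\phi_j(P_{\bf i})=k>1$, then $P_{\bf i}$ meets at least two branches, so $w_j\in\Gamma_{\bf i}$. By the observation, $\deg(w_j,\Gamma_{\bf i})$ equals the number of branches met, which is $k$. Fullness of $P_{\bf i}$ is not really needed here, only that $\Gamma_{\bf i}$ is the minimal subtree containing $P_{\bf i}$.

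For 3), let $b\in B$ with $\deg(b,\Gamma)=k>1$, so $b$ is a critical point and $b\in\hga$. Indeed, by regularity, each component of $\Gamma\setminus\{b\}$ contains a point of $P$, so $b$ lies on a main subarc. Its neighbours $w_{j_1},\dots,w_{j_k}$ are the copies $K_{j_s}$ containing $b$. By regularity again, each component of $\Gamma\setminus\{b\}$ through $w_{j_s}$ contains some $p_s\in P$. The arc $\gamma(b,p_s)$ starts inside $K_{j_s}$, because the path in $\Gamma$ from $b$ to $p_s$ starts with $w_{j_s}$. This uses the correspondence between $\Gamma$ and the SIP decomposition of $K$ into copies: the path in $\Gamma$ encodes the chain of copies that the arc traverses. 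For $s\neq t$, the arcs $\gamma(b,p_s)$ and $\gamma(b,p_t)$ meet only at $b$, since by SIP and acyclicity their chains of copies are disjoint apart from $b$. These arcs lie in $\hga$, giving $k$ distinct components of $\hga\setminus\{b\}$. So $\Ord(b,\hga)\ge k$.

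For 4), I apply the same argument inside the subtree $\hga_{\bf i}=\gamma(P_{\bf i})$, whose combinatorial model is $\Gamma_{\bf i}$. A vertex $b$ of degree $k$ in $\Gamma_{\bf i}$ has $k$ branches, each containing a point of $P_{\bf i}$, since the leaves of a minimal subtree lie in $P_{\bf i}$. As in 3), this yields $k$ arcs from $b$ to points of $P_{\bf i}$ that are disjoint except at $b$, so $\Ord(b,\hga_{\bf i})\ge k$. Then I use \eqref{eq:ord} with $x=b$, $y=S_{\bf i}(b)$ and $\phi^*_{\bf i}(\hga)=\hga_{\bf i}$ to get $\Ord(y,\hga)\ge\Ord(b,\hga_{\bf i})\ge k$.

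The main obstacle is the rigorous transfer in 3) from adjacency in $\Gamma$ to arcs in $K$ that are disjoint off $b$. I would argue it by taking minimal arcs between the disjoint connected unions of copies corresponding to the components of $\Gamma\setminus\{b\}$, and using Definition \ref{minarc} together with the SIP.
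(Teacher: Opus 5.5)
Your proposal is correct and follows the paper's own proof. Items 1)--2) come from $\phi_{{\bf i}j}(P)=\phi_j(P_{\bf i})$ and counting the branches at $w_j$ met by $P_{\bf i}$, a relative form of Lemma \ref{degwi}. Item 3) comes from $k$ arcs from $b$ to boundary points of $\hga$ that pairwise meet only at $b$. Item 4) is the same argument inside $\hga_{\bf i}=\phi^*_{\bf i}(\hga)$, combined with \eqref{eq:ord}. Your branch-count identity and your justification that the arcs meet only at $b$ fill in steps the paper simply asserts. That justification uses the connected unions of copies lying in different components of $\Gamma\setminus\{b\}$, which by the SIP intersect only at $b$.
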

\begin{proof}
  
1) If $\#\phi_{{\bf{i}}j}(P) = 1$, then $\deg(w_j, \Gamma_{\bf{i}}) = 1$ therefore $w_{j} \notin\Gamma_{\bf{i}}$.   

 2) Since $\phi_{{\bf{i}}j}(P) = \phi_{j}( \phi_{\bf{i}}(P))$,   $\deg(w_{j}, \Gamma_{\bf{i}}) = \#\phi_{j}(\phi_{\bf{i}}(P))$.

3) If $\deg(b, \Gamma) = k$  there exist  subarcs  $\gamma(p_1,b), \gamma(p_2,b),\dots \gamma(p_k,b)$ of $\hat{\gamma}$ such that for any $i\neq j$, \  $\gamma(p_i,b)\cap \gamma(p_j,b) = \{b\}$. It follows that $\Ord(b,\hga) \geq k$.

4)  If $\deg(b, \Gamma_{\bf{i}}) \geq k > 1$ there are subarcs $\gamma(p_i,b), i=1,...,k$ of $\phi^*_{\bf{i}}(\hat{\gamma})$ with the same property as in 3). It follows that $\Ord(b,\phi^*_{\bf{i}}(\hat{\gamma})) \ge k$. 
By formula \eqref{eq:ord}, 
$\Ord(S_{\bf{i}}(b),\hat{\gamma}) \ge\Ord(b,\phi^*_{\bf{i}}(\hat{\gamma})) \geq k$. Therefore, $\Ord(y,\hat{\gamma}) \ge  k$.
By formula \eqref{eq:ord}, 
$\Ord(S_{\bf{i}}(b),\hat{\gamma}) \ge  \Ord(b,\phi^*_{\bf{i}}(\hat{\gamma})) \geq k$. Therefore, ${\Ord} (y,\hat{\gamma}) \ge  k$.
\end{proof}

{\bf Definition and properties of     $\Nfi(\al)$. }\ For an address $\al=j_1\ldots j_k\ldots $, we define $\al|_n=j_1\ldots j_n $. Consider the sets $P_{\al|_n}=\phi_{\al|_n}(P)$.
For any $n$, $\#P_{\alpha|_n}\ge \#P_{\alpha|_{n+1}}\ge 1$, and hence
$\lim\limits_{n\to\infty}\#P_{\alpha|_n}=\min \#P_{\alpha|_n}$ is an integer $\ge 1$, which we denote by $\Nfi(\al)$.

\begin{theorem}\label{ord_b}
For any $x\in K$, the set $A_x=\{\alpha\in\pi^{-1}(x):\Nfi(\alpha)  > 1\} $ is finite.  $A_x=\varnothing$ if and only if $x\notin\hat\gamma$.
\begin{itemize}
   
     \item [1)] If $\#A_x=1$ and $x\notin  \partial K$, then  ${\mathop{\rm Ord} \nolimits}(x,\hga) =  \Nfi(\alpha) $.
      \item [2)] If $\#A_x=1$ and $x\in  \partial K$, then  
     ${\mathop{\rm Ord} \nolimits}(x,\hga) =  \Nfi(\alpha)  - 1$.
     \item [3)] 
If $\#A_x>1$, then 
    \begin{equation}
      {\Ord}(x,\hga)= \sum\limits_{\alpha\in A_x} (\Nfi(\alpha)  - 1). 
    \end{equation}
     \end{itemize}
\end{theorem}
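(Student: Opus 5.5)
The key observation is that for every address $\alpha$ of $x$ and every $n$ the copy $K_{\alpha|_n}$ contains $x$, and by Proposition \ref{fybj} and the identity $\phi^*_{\bf j}(\hga)=\hga_{\bf j}$ we have
$$\hga\cap K_{\alpha|_n}=S_{\alpha|_n}(\hga_{\alpha|_n}).$$
So $\hga\cap K_{\alpha|_n}$ is a subtree whose intersection with $\partial K_{\alpha|_n}$ has $\#P_{\alpha|_n}$ points. I will first prove the characterization of $A_x$ and its finiteness.

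If $\Nfi(\alpha)=1$ for every $\alpha\in\pi^{-1}(x)$, then for large $n$ each $\hga\cap K_{\alpha|_n}$ is contained in a single point of $\partial K_{\alpha|_n}$. If this holds for all the finitely many relevant copies around $x$, then $x\in\hga$ forces $x$ to equal that boundary point at every level. This is impossible, since the copies shrink to $x$ while their boundary points are distinct from $x$ for large $n$. Hence $x\notin\hga$.

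Conversely, if $x\notin\hga$, then $\hga$ is closed and disjoint from $K_{\alpha|_n}$ for large $n$, so $P_{\alpha|_n}$ reduces to the single point of the minimal arc. Thus $\Nfi(\alpha)=1$ and $A_x=\varnothing$.

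For finiteness of $A_x$, I fix $n$ so large that the prefixes $\alpha|_n$ separate the addresses under consideration. The distinct copies $K_{\alpha|_n}$ meet pairwise only at $x$, by the single intersection property. If $\hga$ meets $K_{\alpha|_n}\setminus\{x\}$ in a nondegenerate subtree, then $\hga\setminus\{x\}$ has a component entering that copy. Since $\hga$ is a finite tree, $\Ord(x,\hga)<\infty$, so only finitely many such copies exist and $\#A_x\le\Ord(x,\hga)$ (or $\le 1$ when the order is $1$).

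Next I compute the order, which is the core step. For $n$ large and each $\alpha\in A_x$, stability gives $\#P_{\alpha|_n}=\Nfi(\alpha)$, and each point of $P_{\alpha|_n}$ corresponds to a branch of $\hga$ leaving $K_{\alpha|_n}$ through a boundary point of that copy. I claim that the branches of $\hga$ at $x$ inside $K_{\alpha|_n}$ correspond to the points of $P_{\alpha|_n}$ other than $S^{-1}_{\alpha|_n}(x)$, when $x$ is itself such a point. Equivalently, inside $K_{\alpha|_n}$ the subtree $\hga\cap K_{\alpha|_n}$ is, in the limit, a star at $x$ with $\Nfi(\alpha)$ arms to the boundary points of the copy, with one arm lost if $x$ is a boundary point of the copy.

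To justify this, I will use monotonicity $\#P_{\alpha|_n}\ge\#P_{\alpha|_{n+1}}$ together with stabilization. Once $\#P$ is constant, passing from $K_{\alpha|_n}$ to $K_{\alpha|_{n+1}}$ loses no branch. So every arc from $x$ toward a point of $S_{\alpha|_n}(P_{\alpha|_n})$ enters every smaller copy, and these arcs are pairwise disjoint except at $x$; otherwise they would merge at a ramification point outside a deep copy, and the count would drop. When $x\in\partial K$, the point $x$ itself is one element of $P_{\alpha|_n}$ (namely $S_{\alpha|_n}^{-1}(x)$ lies in $P$ for the stable tail), and it contributes no arc.

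Summing over the copies, the components of $\hga\setminus\{x\}$ are the disjoint unions of these arms. Copies with $\Nfi=1$ contribute nothing. In case 3), each $\alpha\in A_x$ contributes $\Nfi(\alpha)-1$ because $x$ is a boundary point of each of these copies when $\#A_x>1$. In case 1) the single copy contains $x$ in its interior, giving $\Nfi(\alpha)$. In case 2) $x\in\partial K$ gives $\Nfi(\alpha)-1$.

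The main obstacle is the claim that in the stabilized regime the arms are genuinely distinct components at $x$, i.e.\ do not merge before reaching $x$. I would attack it by contradiction with Proposition \ref{fybj}: a merge point $y\ne x$ lies outside $K_{\alpha|_N}$ for large $N$, and then two points of $P_{\alpha|_n}$ would have the same minimal-arc endpoint on $\partial K_{\alpha|_N}$. This would force $\#P_{\alpha|_N}<\#P_{\alpha|_n}$, contradicting stabilization.
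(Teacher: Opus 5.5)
Your strategy is the paper's: for each $\alpha\in A_x$, pass to prefixes $\alpha|_n$ with $\#P_{\alpha|_n}=\Nfi(\alpha)$, show that $\hga\cap K_{\alpha|_n}$ is a star at $x$, and add up the arms. Your merge-point argument for the star, which uses $\phi_{\alpha|_N}=\phi_{\rm tail}\circ\phi_{\alpha|_n}$ and stabilization, is correct. It actually justifies what the paper only asserts in 1) and 2); in 3) the paper instead shrinks the copies until they contain no ramification or boundary point of the finite tree $\hga$ other than $x$.

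\textbf{The gap: deciding whether $x$ itself costs an arm.} The right criterion is the one you state first: $x$ costs an arm iff $S_{\alpha|_n}^{-1}(x)\in P_{\alpha|_n}$, i.e.\ iff the minimal arc from some $p\in P$ to $K_{\alpha|_n}$ ends at $x$. Your ``equivalently'' (an arm is lost iff $x$ is a boundary point of the copy) is not the same criterion. Neither is your opening claim that $\hga\cap\partial K_{\alpha|_n}$ has exactly $\#P_{\alpha|_n}$ points. Both fail as follows.
\begin{itemize}
\item Read with the paper's convention $\partial K_{\bj}=S_{\bj}(\partial K)$, they fail already in Fig.~\ref{fig:hex_polygon2}. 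There $a=S_4(p_3)$ lies in $\hga\cap\partial K_{43^n}$ for every $n\ge0$ but is not an entry point, and $\Ord(a,\hga)=\Nfi(4\overline3)$. So your justification of 1) (``the single copy contains $x$ in its interior'') fails for $a$.
\item Read topologically (``$x$ lies in another copy of the same level''), they fail differently. $\#A_x=1$ does not exclude further addresses of $x$ with $\Nfi=1$, and then $x$ lies in other copies without being an entry point.
\item In 3), ``$x$ is a boundary point of each copy'' does not by itself make $x$ an entry point.
\end{itemize}
The paper's proof of 1) has the same slip when it asserts $x\in\hga\setminus G_{\eS}(\partial K)$, which is false for $a$.

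\textbf{What is missing: a lemma assigning an address to each arm.} Let $U$ be a component of $\hga\setminus\{x\}$. Then $U$ contains an arc from $x$ which, for each $n$, lies (except for $x$) in a single level-$n$ copy containing $x$. This holds because these finitely many copies meet pairwise only at $x$ and their union is a neighbourhood of $x$. The copies are nested, so they are $K_{\alpha_U|_n}$ for an address $\alpha_U$ of $x$. Moreover $\#P_{\alpha_U|_n}\ge2$ for all $n$, i.e.\ $\alpha_U\in A_x$.

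This lemma also supplies the uniformity you need for ``copies with $\Nfi=1$ contribute nothing'', since $\pi^{-1}(x)$ may be infinite, even uncountable. It settles the arm count in each case:
\begin{itemize}
\item In 1), every $p\in P$ lies in a component of $K\setminus\{x\}$ meeting $K_{\alpha|_n}\setminus\{x\}$. So no minimal arc from $P$ to $K_{\alpha|_n}$ ends at $x$, and the count is $\Nfi(\alpha)$ whether or not $x$ has other addresses.
\item In 3), a component $U$ with $\alpha_U\neq\alpha$ contains a point of $P$ whose minimal arc to $K_{\alpha|_n}$ must end at $x$, because arcs from $x$ into $K_{\alpha|_n}$ and into $K_{\alpha_U|_n}$ meet only at $x$. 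So the count is $\Nfi(\alpha)-1$.
\item The lemma also repairs your proof that $A_x=\varnothing$ forces $x\notin\hga$. There, ``the boundary points of the copies are distinct from $x$'' is false for points with several addresses. Instead, if $x\in\hga$ then $\hga\setminus\{x\}$ has a component $U$, and $\alpha_U\in A_x$.
\end{itemize}
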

 
\begin{proof}
1) If the point $x \in \hga \setminus\partial K$ and $ A_x =\{\alpha\}=$ $j_1j_2\hdots$, then $x \in \hga \setminus G_{{\eS}}(\partial K)$  and $\Nfi(\alpha)=k > 1$. There is  $m$ such that for the initial subword $\bf{j} =$ $ j_1j_2\hdots j_m$ of $\alpha$,   $K_\bj\cap\partial K=\varnothing$ and $\#\phi_{\bf{j}}(P) = k$.

The same holds for any $\bf{j}'$ such that $ \bf{j} \sqsubset \bf{j}' \sqsubset \alpha$. Therefore, $K_{\bf{j}'} \cap \hga$  is a subtree of $\hga$ that has $k$ endpoints and a unique  point $x$. Therefore,  ${\mathop{\rm Ord} \nolimits}(x,\hat{\gamma}) = \Nfi(\alpha) = k$.

 An example of this situation is shown in Fig. \ref{fig:case3ex}.
\begin{figure}[htp]
    \centering
    \includegraphics[width=\linewidth]{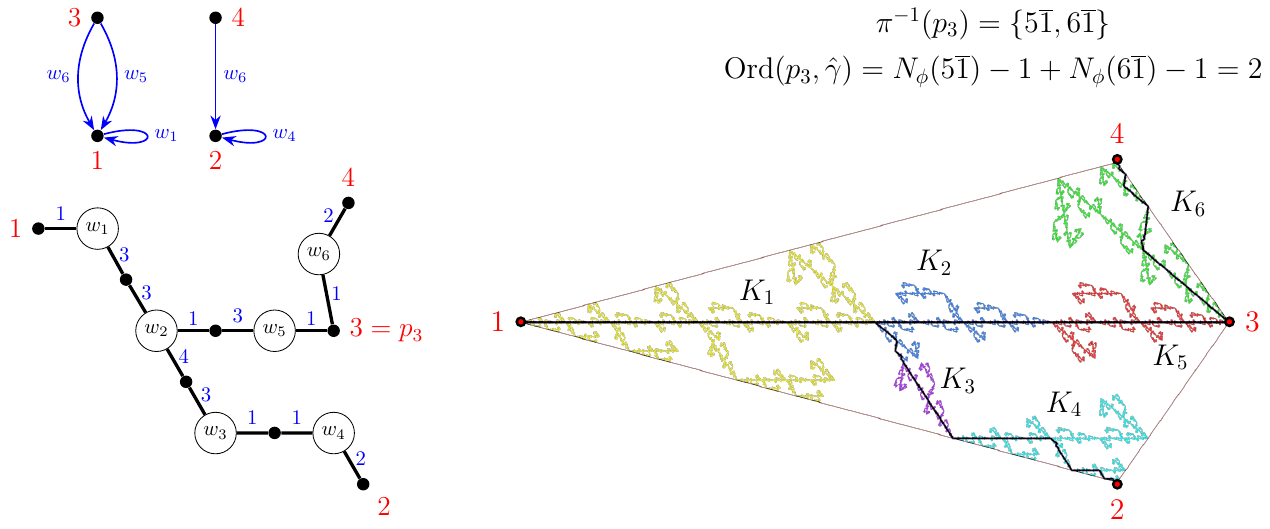}
    \caption{ Attractor $K$, its $P$-sprout  and the index diagram. The boundary point $p_3$ has two addresses. Its order in $\hat{\gamma}$ is equal to 2. }
     
    \label{fig:case3ex}
\end{figure}

2) In the case where $x \in \partial K$, $ A_x =\{\alpha\}=j_1j_2...$, and $\Nfi(\alpha)=k > 1$, there is  $m$ such that for the initial subword $\bf{j} =$ $ j_1j_2\hdots j_m$ of length $m$,   $K_{\bf{j}}\cap \partial K=\{x\}$ and $\#\phi_{\bf{j}}(P) = k$.

These two relations hold for any $\bf{j}'$ such that $ \bf{j} \sqsubset \bf{j}' \sqsubset \alpha$. Then $K_{\bf{j}'} \cap \hat \gamma $  is a subtree of $\hga$ that has $k-1$ endpoints and a unique cut  point $x$, because $x \in \phi_{\bf{j'}}(P) $. Therefore,  ${\mathop{\rm Ord} \nolimits}(x,\hat{\gamma}) = \Nfi(\alpha)-1 = k-1$.



3) Let $\alpha^1,...,\alpha^n$ be the addresses of the point  $x$, such that for any   $i \in \mathbb{N}$, $\Nfi(\alpha^i) > 1$.
For a sufficiently large $m$,  a set $\{\bf{j}^1,\bf{j}^2...,\bf{j}^n\}$ of initial subwords of  length  $m$ for the  addresses $\alpha^1,...,\alpha^n$ satisfies the following conditions.\\ a) All these multiindices are different.\\ b) For each $\bf{j}^k$, $K_{\bf{j}^k} \cap \partial K = \{x\}$ if $x \in \partial K$ and $K_{\bf{j}^k} \cap \partial K = \{\varnothing\}$ if $x \in \hat{\gamma}\setminus \partial K$.\\ c) Each copy $K_{\bf{j}^k}$  does not contain any boundary or ramification point of  $\hga$ except $x$. 

Under these conditions, $\#\phi_{\bf{j}^k}(P) = \Nfi(\alpha^k)$.
Then for any $\bf{j}'^k $ such that $\bf{j}^k \sqsubset \bf{j}'^k \sqsubset \alpha_k$,\ \ 
$\#S_{\bf{j}'^k} (P) \cap \hat \gamma = {\mathop{\rm Ord} \nolimits}(x, K_{\bf{j}'^k }  \cap \hat \gamma) + 1 $.

Applying the above argument to each of the addresses of the point   $x$, we obtain the inequality 
 $\sum\limits_{k=1}^n {\mathop{\rm Ord} \nolimits}(x, K_{{\bf{j}^k}} 
\cap \hat \gamma) \le  {\mathop{\rm Ord} \nolimits}(x, \hat \gamma) \le \#P   $. Therefore, the set $A_x$ is finite and  ${\mathop{\rm Ord} \nolimits}(x,\hga) = \sum\limits_{\alpha \in A_x} (\Nfi(\alpha)  - 1)$.\\
If for some address $\alpha^i$ there is a multiindex $\bf{i}$ $\sqsubset \alpha^i$ such that $\#\phi_{\bf{i}}(P) = 1$, then $K_{\bf{i}} \cap \hat \gamma = \{x\}$. Consequently, ${\mathop{\rm Ord} \nolimits}(x, K_{\bf{i}} \cap \hga) = 0 $.\\
\end{proof}

\begin{figure}[htp]
\centering
\includegraphics[width=\textwidth]{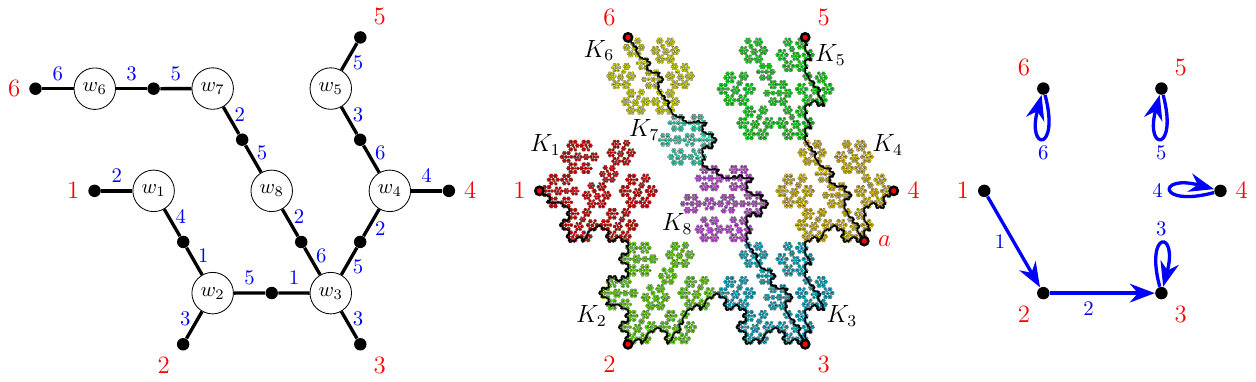}
\caption{ An attractor $K$ with its main tree $\hat \gamma$, its $P$-sprout, and its index diagram. Each boundary point has a  unique address. ${\mathop{\rm Ord} \nolimits}(p_2, \hat \gamma) = \Nfi(2\overline{3}) -1= 2, {\mathop{\rm Ord} \nolimits}(p_3, \hat \gamma) = \Nfi(\overline{3}) -1= 3$. All the other points are endpoints. There is ramification point $a$, such that $\pi^{-1}(a) = 4\overline{3},\; {\mathop{\rm Ord} \nolimits}(a, \hat \gamma) = \Nfi(4\overline{3}) = 3 $ }
\label{fig:hex_polygon2}
\end{figure}

 \begin{corollary}\label{ram_adr}
        Let  $\alpha$ be the address of a point $x\in K$ and $\Nfi(\alpha) \ge 3$.\\ If $\pi^{-1}(x)=\{\alpha\}$, $x\in\partial K$ and $\Nfi(\alpha) = 3$, then ${\rm Ord}(x,\hga)=2$.\\ Otherwise,  ${\mathop{\rm Ord} \nolimits}(x,\hga) \ge 3$.
\end{corollary}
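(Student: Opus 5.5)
We derive Corollary \ref{ram_adr} directly from Theorem \ref{ord_b} by a case split on the cardinality of $A_x$. First, since $\alpha\in\pi^{-1}(x)$ and $\Nfi(\alpha)\ge 3>1$, the address $\alpha$ belongs to $A_x$. Hence $A_x\neq\varnothing$, $x\in\hga$, and one of the three cases of Theorem \ref{ord_b} applies.

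\emph{Case $\#A_x=1$.} Then $A_x=\{\alpha\}$. If $x\notin\partial K$, part 1) of Theorem \ref{ord_b} gives ${\rm Ord}(x,\hga)=\Nfi(\alpha)\ge 3$. If $x\in\partial K$, part 2) gives ${\rm Ord}(x,\hga)=\Nfi(\alpha)-1$. This is $\ge 3$ when $\Nfi(\alpha)\ge 4$, and equals $2$ exactly when $\Nfi(\alpha)=3$. This case contains the exceptional situation of the statement.

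\emph{Case $\#A_x>1$.} By part 3) of Theorem \ref{ord_b},
$$
{\rm Ord}(x,\hga)=\sum_{\beta\in A_x}(\Nfi(\beta)-1)\ \ge\ (\Nfi(\alpha)-1)+\sum_{\beta\in A_x\setminus\{\alpha\}}(\Nfi(\beta)-1).
$$
The first term is at least $2$. Each remaining term is at least $1$, because $\Nfi(\beta)\ge 2$ for $\beta\in A_x$, and there is at least one such term. So the sum is $\ge 3$.

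The delicate point is matching the corollary's hypothesis ``$\pi^{-1}(x)=\{\alpha\}$'' with the theorem's ``$\#A_x=1$''. In the exceptional clause, $\pi^{-1}(x)=\{\alpha\}$ forces $A_x=\{\alpha\}$, so part 2) yields ${\rm Ord}(x,\hga)=2$. For the ``otherwise'' clause, the only configurations not covered by the case analysis above are those with $x\in\partial K$, $\Nfi(\alpha)=3$ and $\#A_x=1$, but $\pi^{-1}(x)$ containing further addresses $\beta$ with $\Nfi(\beta)=1$. Theorem \ref{ord_b} assigns these the value $2$ as well.

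I expect this to be the main obstacle. I would first try to show that such configurations cannot occur, or that the corollary is meant to be read with ``$\pi^{-1}(x)=\{\alpha\}$'' understood as $A_x=\{\alpha\}$. For the first route, I would use the last remark in the proof of Theorem \ref{ord_b}. An address $\beta$ with $\Nfi(\beta)=1$ gives a copy $K_{\bf i}$ with $K_{\bf i}\cap\hga=\{x\}$. Such a copy contributes nothing to the order of $x$ in $\hga$, although it does contribute to ${\rm Ord}(x,K)$ by Lemma \ref{ordpi}. Failing that, I would state the dichotomy in terms of $A_x$, which is exactly what the case split above proves.
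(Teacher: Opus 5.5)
Your case analysis on $\#A_x$ is how the corollary is meant to follow from Theorem~\ref{ord_b}; the paper gives no separate argument. Every step is right: $\alpha\in A_x$, parts 1) and 2) settle $\#A_x=1$, and in part 3) the term for $\alpha$ is at least $2$ while each of the (at least one) remaining terms is at least $1$.

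On the configuration you single out ($x\in\partial K$, $A_x=\{\alpha\}$, $\Nfi(\alpha)=3$, but $\pi^{-1}(x)\supsetneq\{\alpha\}$), the first route is not the way to go. The remark you plan to use works against it. An address $\beta$ with $\Nfi(\beta)=1$ gives copies $K_{\bf i}$ with $K_{\bf i}\cap\hga=\{x\}$, and these contribute nothing to $\Ord(x,\hga)$. So in that configuration Theorem~\ref{ord_b}(2) gives $\Ord(x,\hga)=2$, and the literal ``otherwise'' clause would be false, not merely unproved.

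Nothing in the paper rules such points out. Think of $x=S_i(y)\in\partial K$ with $y\in K_k\cap K_l$, where $\hga\cap K_i$ has two branches at $x$, both inside $K_{ik}$, and $\hga\cap K_{il}=\{x\}$.

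So the hypothesis $\pi^{-1}(x)=\{\alpha\}$ has to be read as $A_x=\{\alpha\}$, your second option. The paper makes the same identification in the proof of Theorem~\ref{ord_b}(1). There it passes from $A_x=\{\alpha\}$ (with $x\notin\partial K$) to $x\notin G_{\eS}(\partial K)$, i.e.\ to $x$ having a single address. With that reading, your case split is a complete proof.
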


\begin{proposition}\label{ram_p}
       If  for some $\bf{i} $, $\#\phi_{\bf{i}}(P) \ge 3$, then at least  one of the following statements holds:\\
1) $K_{\bf{i}}\cap \partial K$  is nonempty and contains a point  $x$ such that  ${\mathop{\rm Ord} \nolimits}(x,\hga) \geq 2$;\\
2) there is a point  $x \in K_{\bf{i}} \cap \hat{\gamma}$ such that  ${\mathop{\rm Ord} \nolimits}(x,\hga) \geq 3$.
\end{proposition}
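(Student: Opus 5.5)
The plan is to use the identification $S_{\bf i}(P_{\bf i})=\hga\cap\partial K_{\bf i}$ from Proposition \ref{fybj}. The hypothesis $\#\phi_{\bf i}(P)\ge 3$ then says that the main tree $\hga$ meets the boundary of the copy $K_{\bf i}$ in at least three distinct points $y_1,y_2,y_3$. Since $\hga$ is a dendrite and $K_{\bf i}$ is a subcontinuum, the intersection $\hga\cap K_{\bf i}$ is connected; in fact $\hga\cap K_{\bf i}=S_{\bf i}(\hga_{\bf i})$ by the relation $\phi^*_{\bf i}(\hga)=\hga_{\bf i}$. So $T:=S_{\bf i}(\gamma(P_{\bf i}))$ is a finite topological tree in $K_{\bf i}\cap\hga$ containing $y_1,y_2,y_3$.

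I split into cases according to whether $K_{\bf i}$ contains boundary points of $K$.

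\emph{Case A: $K_{\bf i}\cap\partial K=\varnothing$.} Every $y_k$ lies in $\partial K_{\bf i}$. Moreover, each $y_k$ is joined to some point $p\in\partial K$ outside $K_{\bf i}$ by an arc of $\hga$ meeting $K_{\bf i}$ only at $y_k$. This is exactly how $y_k=S_{\bf i}(\phi_{\bf i}(p))$ arises as the endpoint of the minimal arc $\gamma(p,K_{\bf i})$. Take the tripod formed by the arcs $\gamma(y_1,y_2)$, $\gamma(y_1,y_3)$, $\gamma(y_2,y_3)$ inside $T$. Their common point (the median) $x\in T\subset K_{\bf i}\cap\hga$ is distinct from the $y_k$, or it coincides with one of them, say $y_1$.
\begin{itemize}
\item In the first subcase, $x$ has three arcs of $\hga$ emanating from it with pairwise intersection $\{x\}$, so $\Ord(x,\hga)\ge3$ and 2) holds.
\item In the second subcase, $y_1$ already has two branches inside $K_{\bf i}$ (towards $y_2$ and $y_3$). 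It has a third branch outside $K_{\bf i}$, namely the arc $\gamma(p,K_{\bf i})$ towards $p$. These meet only at $y_1$ because $\gamma(p,K_{\bf i})\cap K_{\bf i}=\{y_1\}$. Hence $\Ord(y_1,\hga)\ge3$, again giving 2).
\end{itemize}

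\emph{Case B: $K_{\bf i}\cap\partial K\neq\varnothing$.} Pick $x\in K_{\bf i}\cap\partial K$. If $\Ord(x,\hga)\ge2$, then 1) holds. So assume every such point is an endpoint of $\hga$. Among $y_1,y_2,y_3$, those not in $\partial K$ are boundary points of $K_{\bf i}$ carrying an external branch as in Case A. Those in $\partial K$ are endpoints of $\hga$, hence endpoints of $T$ as well. In either situation the tripod argument goes through. If the median $x$ is interior to all three arcs, we get order $\ge3$. If it coincides with some $y_k$, that $y_k$ cannot be a boundary point of $K$, since it has order $\ge2$ already in $T$. So it is a point of $\partial K_{\bf i}$ with an external branch, and its order is $\ge3$. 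This yields 2).

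The main obstacle is the bookkeeping in the degenerate case $x=y_k$. There one must justify that the branch leaving $K_{\bf i}$ is genuinely disjoint, apart from $y_k$, from the two internal branches. This follows from the minimal arc property in Definition \ref{minarc} together with Proposition \ref{fybj}. One also needs that a point $y_k\in\partial K\cap K_{\bf i}$ does not have an outside branch. That is handled precisely by the assumption, made in Case B, that all such points are endpoints of $\hga$.
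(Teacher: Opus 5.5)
Your proof is correct. Case A is essentially the paper's argument. The paper also takes three points of $\partial K_{\bf i}\cap\hga$ and the common point of the three arcs between them. In the degenerate case, where that point is one of the three, it uses a subarc $\gamma'\subset\hga\setminus K_{\bf i}$ ending there.

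Case B is where you diverge. The paper fixes one $x\in K_{\bf i}\cap\partial K$ and two further points $a,b\in(\partial K_{\bf i}\cap\hga)\setminus\{x\}$, and looks only at the V-shaped union $\gamma(a,x)\cup\gamma(b,x)$. If these arcs meet only at $x$, then $\Ord(x,\hga)\ge 2$ and 1) holds. Otherwise their common part $\gamma(y,x)$ ends at a branch point $y$, which the paper asserts has order $\ge 3$. You instead dispose of 1) for all points of $K_{\bf i}\cap\partial K$ at once, and then rerun the tripod of Case A. The standing assumption guarantees that a degenerate median is not in $\partial K$ and therefore carries an external branch.

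The paper's version is shorter and takes the chosen $x$ itself as the witness for 1). Yours is more uniform, and it covers a configuration the paper passes over. If the branch point $y$ coincides with $a$ (or $b$), order $\ge 3$ is not automatic, and one needs exactly your dichotomy: if $a\in\partial K$, then $a$ witnesses 1); if $a\notin\partial K$, the external arc at $a$ gives 2).

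You leave one step implicit in Case B: a point $y_k=S_{\bf i}(\phi_{\bf i}(p))\notin\partial K$ must come from some $p\notin K_{\bf i}$. This is immediate, since $p\in K_{\bf i}$ would make $\gamma(p,K_{\bf i})$ degenerate and force $y_k=p\in\partial K$.
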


\begin{proof}
If there is $x\in K_\bi\cap \partial K$, then there are $ \{a,b\} \IN (\partial K_\bi \cap \hga)\setminus\{x\}$. Consider the subarcs $\ga(a,x), \ga(b,x)$ of $\hga$. If $\ga(a,x)\cap \ga(b,x)=\{x\}$, ${\Ord}(x,  \hga) \ge 2$, and 1) follows. \\
Otherwise, there is $y$ such that $\gamma(a,x)\cap \gamma(b,x)=\gamma(y,x)$, and
${\mathop{\rm Ord} \nolimits}(y,  \hga) \geq 3$.\\
If $K_{\bf{i}}\cap \partial K=\varnothing$, take $ \{a,b,c\} \subset \partial K_{\bf{i}} \cap \hat{\gamma}$. 
 Consider the subarcs $\gamma(a,b)$, $\gamma(b,c)$, $\gamma(a,c)$ of $\hat{\gamma}$. The  intersection  $\gamma(a,b) \cap \gamma(b,c) \cap \gamma(a,c)$ is a singleton $\{x\}$.
 If $x \notin \{a,b,c\}$, ${\mathop{\rm Ord} \nolimits}(x, \gamma(a,b,c)) = 3$.\\
If $x \in \{a,b,c\}$, say $x=b$, then $\gamma(a,b) \cap \gamma(b,c) = \{b\}$. Since there is a subarc $\gamma'\subset\hga\setminus{K_{\bf{i}}}$  ending at  $b$, ${\mathop{\rm Ord} \nolimits}(x, \hga) \ge {\mathop{\rm Ord} \nolimits}(x,  (\gamma'\cup \gamma(a,b) \cup \gamma(b,c)) ) =3$.
\end{proof}
Proposition \ref{ram_p}  shows how to find  copies that contain the ramification points and thereby find the addresses of these points. In case the ramification points have multiple addresses, they are the images of boundary points. The addres\-ses of  boundary points are defined using the index diagram according to Pro\-po\-sition \ref{infwalks}. A tool for finding all the needed addresses will be a directed  graph $G_T$, which we define  below.\\

{\bf The graph $G_T$}. By analogy to the index diagram, we construct a labeled digraph $G_T=\{ V_T, E_T, \psi_T\}$,  such that each proper walk in this graph defines an address of a ramification point of $\hga$ or of a  boundary cut point in  $\hga$.\\

 The set of vertices  $V_T$ consists of three disjoint parts: $V_Q$,  
 $V_B$, and $V_P$. \\
 $V_Q$ is the set of all subsets
$Q\IN P$  such that  $\#Q\ge 3$ and for some $\bi\in I^*$, $\phi_\bi(P)=Q$; \\
$V_B=\{b\in B: \deg(b, \Gamma) \ge  3\}$;\\
$V_P=\{p\in P: \deg(p, \Gamma) \ge  2\}$. 
\\

The set of edges $E_T$ also consists of 3 disjoint parts, $E_Q$, $E_B$, and $E_P$.\\
The set $E_Q$ is supplied with a labeling function $\psi:E_Q\to I$.

A pair $Q,Q'\in V_Q$ defines  an edge $\be=(Q,Q')\in E_Q$ with a label $\psi(\be)=i$ exactly when there is $i\in I$ such that $\phi_i(Q)=Q'$. 

In the same way as in  the definition \ref{walks}, each walk $\be_1...\be_k$, $\be_i\in E_Q$,   has labels $l_j=\psi(\be_j),j=1,...,k$ and travels along a sequence  of vertices
 $Q_0,...,Q_k$ such that for any  $j=1,...,k$, 
 $\phi_{l_j}(Q_{j-1})=Q_j$. Consequently,  $\phi_{l_1....l_k}(Q_0)=Q_k$.
 
 In its turn, each infinite walk $\be_1...\be_k...$ in $E_Q$ that starts from $Q_0=P$ defines an address ${\beta }=l_1l_2...$ such that $\Nfi(\beta)\ge 3$.

 The sets of edges $E_B$ and $E_P$  are defined in the following way.
 
A pair $(Q,b)$, where $Q\in V_Q$ and $b\in V_B$, defines an edge $\be=(Q,b)\in E_B$ exactly when $b\notin Q$ and $\deg(b,G_Q)\ge 3$.

A pair $(Q,p)$, where $Q\in V_Q$ and $p\in V_P$, defines an edge $\be=(Q,p)\in E_P$ exactly when $p\in Q$ and $\deg(p,\Ga_Q)\ge 2$.\\
The edges $\be\in E_B\cup E_P$ have no labels;  they are the terminal edges  of finite paths in $G_T$.\\

A walk $\omega=(\be_1...\be_k...)$ is {\em proper}, if it starts from the vertex $P$, and cannot be extended to a larger walk.
In the following, we will consider only proper walks in $E_T$.

\begin{lemma}\label{ram_b}
    If for some vertex $b \in B$, $\deg(b, \Gamma) \geq 3$ then $b$ is a ramification point of $\hat{\gamma} $.
\end{lemma}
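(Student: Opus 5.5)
The approach is the same as in Proposition \ref{ordim}, part 3): the $k=\deg(b,\Gamma)\ge 3$ branches of $\Gamma$ at $b$ give $k$ arcs of $\hga$ that meet only at $b$, so ${\rm Ord}(b,\hga)\ge 3$. Here $b\in B$ is viewed as the point of $K$ it denotes: a critical point of $\mathcal C$ or a boundary point of $P$. The only thing to check carefully is that each branch contains a boundary point and produces a genuine arc emanating from $b$.

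First, since $\Gamma$ is a regular $P$-sprout, the Remark after the definition of regularity applies. Each component of $\Gamma\setminus\{b\}$ contains a boundary point $p\in P$. Let $w_{j_1},\dots,w_{j_k}$ be the white neighbours of $b$, with $k\ge 3$, and pick $p_l\in P$ in the component through $w_{j_l}$. The unique path $\beta(b,p_l)$ in the tree $\Gamma$ is $b\,w_{j_l}\,b'\cdots p_l$.

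Next, I realize this combinatorial path as an arc in $K$. Consecutive black vertices $b,b'$ on the path lie in the common copy $K_{j_l}$, which is a subcontinuum of $K$. The arc $\gamma(b,b')\subset K_{j_l}$ therefore exists, and concatenating these arcs gives a connected set joining $b$ to $p_l$. Hence the arc $\gamma(b,p_l)\subset\hga$, because $\hga\supset\gamma(p,q)$ for all $p,q\in P$. If $b\in P$, this inclusion is immediate. If $b\in\mathcal C$, then $b$ lies on $\gamma(p_l,p_{l'})$ for $l\ne l'$, by the separation argument below.

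The key step is disjointness: $\gamma(b,p_l)\cap\gamma(b,p_{l'})=\{b\}$ for $l\neq l'$. I would argue via the SIP, as in Lemma \ref{ordpi}. The set $K\setminus\{b\}$ splits according to the components of $\Gamma\setminus\{b\}$: the union of copies $K_i$ with $w_i$ in one component meets the union for another component at most in $b$. This holds because $\Gamma$ is acyclic, so any other common point would be a black vertex of $\Gamma$ joined to both components, which would give a cycle. The two unions are continua that intersect only at $b$. The concatenated path for branch $l$ lies in the union for branch $l$, so the arcs $\gamma(b,p_l)$, $l=1,\dots,k$, pairwise meet only at $b$. In particular $b\in\gamma(p_l,p_{l'})$, so $b\in\hga$.

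Finally, $k\ge 3$ arcs in $\hga$ meeting only at $b$ lie in distinct components of $\hga\setminus\{b\}$. Hence ${\rm Ord}(b,\hga)\ge 3$ and $b$ is a ramification point of $\hga$. I expect the main obstacle to be the separation step. It requires that the branch unions of copies really meet only at $b$, including the case where $b$ is itself a boundary point. This is where I would invoke the single intersection property together with the acyclicity of $\Gamma$.
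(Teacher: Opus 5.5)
Your proof is correct and is essentially the paper's: the paper proves the lemma with a one-line appeal to Proposition~\ref{ordim}(3), whose proof is exactly your argument that the $k=\deg(b,\Gamma)$ branches of $\Gamma$ at $b$ give $k$ subarcs $\gamma(p_l,b)$ of $\hat\gamma$ pairwise meeting only at $b$. The only difference is that you justify the existence and disjointness of these arcs (via regularity of the sprout, acyclicity of $\Gamma$ and the SIP), which the paper simply asserts.
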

\begin{proof}
    By Proposition \ref{ordim}(3) ${\mathop{\rm Ord} \nolimits}(b, \hga) \ge 3 $. 
\end{proof}
The set of vertices $V_R = \{b \in V_B \cup V_P,\; \deg(b, \Gamma)\geq 3 \}$ will correspond to  the ramification points of $\gamma$ which are the intersection points of at least three copies or the boundary points of the critical set.

 \begin{proposition}\label{fin_cyc}
    Each vertex $Q \in V_Q$ of the graph $G_T$ has an outgoing edge. If $Q$ lies in some cycle, this edge is unique.
 \end{proposition}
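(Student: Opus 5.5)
The plan is to read off the outgoing edges of $Q\in V_Q$ from the shape of the finite tree $\Gamma_Q=\Gamma(Q)$, the smallest subtree of $\Gamma$ containing $Q$.

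\textbf{Existence.} Since $\Gamma_Q$ is minimal, all its leaves lie in $Q$, and $\#Q\ge 3$. I would split into two cases.

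First case: $\Gamma_Q$ is a path. Its two leaves are in $Q$, so a third point $p\in Q$ is an interior vertex of the path. Then $\deg(p,\Gamma_Q)=2$, and $\deg(p,\Gamma)\ge 2$, so $p\in V_P$ and $(Q,p)\in E_P$.

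Second case: $\Gamma_Q$ has a vertex $v$ with $\deg(v,\Gamma_Q)\ge 3$. There are three subcases.
\begin{itemize}
\item If $v=w_j$ is white, then, as in Proposition \ref{ordim}(2), $\#\phi_j(Q)=\deg(w_j,\Gamma_Q)\ge 3$. Writing $Q=\phi_\bi(P)$, we get $\phi_j(Q)=\phi_{\bi j}(P)$, so $\phi_j(Q)\in V_Q$ and $(Q,\phi_j(Q))\in E_Q$ with label $j$.
\item If $v=b\notin Q$ is black, then $\deg(b,\Gamma)\ge\deg(b,\Gamma_Q)\ge 3$, so $b\in V_B$ and $(Q,b)\in E_B$.
\item If $v=p\in Q$, then $(Q,p)\in E_P$.
\end{itemize}
In every case $Q$ has an outgoing edge.

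\textbf{Uniqueness on a cycle.} Edges of $E_B\cup E_P$ are terminal, so a cycle through $Q$ consists of $E_Q$-edges. Let $\bj=j_1\dots j_k$ be its label word; then $\phi_\bj(Q)=Q$. The cardinalities $\#\phi_{j_1\dots j_l}(Q)$ are nonincreasing in $l$ and return to $\#Q$, so every $\phi_{j_l}$ is injective on the current set. Geometrically, Proposition \ref{fybj} and the identity for $\phi^*$ give
$$\gamma(Q)\cap K_\bj=S_\bj(\gamma(\phi_\bj(Q)))=S_\bj(\gamma(Q)).$$
So $\gamma(Q)\cap K_\bj$ is a subtree with the same number $\#Q$ of endpoints as $\gamma(Q)$. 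Hence $\gamma(Q)\setminus K_\bj$ is a union of $\#Q$ arcs, one running from each point of $Q$ to $\partial K_\bj$. These arcs carry no ramification points of $\gamma(Q)$ and no interior points of $Q$.

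Iterating with $\bj^n$ pushes all ramification points of $\gamma(Q)$, and all non-terminal points of $Q$, into $K_{\bj^n}$ for every $n$. Their intersection is the single fixed point of $S_\bj$. So $\gamma(Q)$ is a star: it has exactly one branching point and no point of $Q$ in the interior of an arm.

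Translating back to $\Gamma_Q$ via Proposition \ref{ordim}(3),(4):
\begin{itemize}
\item no black $b\notin Q$ has $\deg(b,\Gamma_Q)\ge3$;
\item no $p\in Q$ has $\deg(p,\Gamma_Q)\ge2$;
\item $w_{j_1}$ is the unique white vertex of degree $\ge3$ in $\Gamma_Q$, since any other white vertex $w_j$ with $\#\phi_j(Q)\ge 3$ would give a second copy $K_j$ meeting $\gamma(Q)$ in a branching subtree disjoint from the interior of $K_{j_1}$.
\end{itemize}
So the only outgoing edge of $Q$ is the cycle edge $(Q,\phi_{j_1}(Q))$.

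\textbf{Main obstacle.} This last translation is where I expect the real work. One has to check that distinct first-level copies $K_{j_1}$ and $K_j$ cannot both contain a ramification point of the star $\gamma(Q)$, or share its centre, in a way that creates a second $E_Q$-edge. My plan is to handle the centre lying on $\partial K_{j_1}$ by using the SIP together with the injectivity of $\varphi$ on $E(w_j)$. If that fails, I would instead prove uniqueness purely combinatorially, by showing that $\deg(w_{j_1},\Gamma_Q)=\#Q$ forces every other vertex of $\Gamma_Q$ to have degree $\le 2$.
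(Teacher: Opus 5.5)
Your existence argument is correct. It is the paper's case analysis in slightly different packaging: you split on whether $\Gamma_Q$ is a path, while the paper splits on whether some $p\in Q$ has $\deg(p,\Gamma_Q)\ge 2$. Yours is a little more careful, since the paper claims a vertex of degree exactly $\#Q$, whereas only degree $\ge 3$ is available and needed.

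The uniqueness part, however, has a genuine gap, exactly at the step you flag. The edges of $G_T$ are defined by degrees in $\Gamma_Q$. Proposition \ref{ordim}(3),(4) transfers information only in one direction, from $\Gamma_Q$ to $\gamma(Q)$. Your geometric conclusion is that all branch points and all non-terminal points of $Q$ sit at the fixed point $x_0$ of $S_\bj$. This does not yield your first two bullets. A black $b\notin Q$ with $\deg(b,\Gamma_Q)\ge 3$, or a $p\in Q$ with $\deg(p,\Gamma_Q)\ge 2$, is only forced to coincide with $x_0$, and nothing in the iteration rules that out.

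The third bullet has the same problem, for two reasons:
\begin{itemize}
\item $\#\phi_j(Q)\ge 3$ need not give a branch point in $\gamma(Q)\cap K_j$, because the middle point of an arc there may itself be a point of $Q$.
\item The special point may be the common point of $K_j$ and $K_{j_1}$, so ``disjoint from the interior of $K_{j_1}$'' gives no contradiction.
\end{itemize}
Also, saying $\gamma(Q)$ has $\#Q$ endpoints presupposes that every point of $Q$ is terminal, which is part of what must be shown. Closing any of these cases requires the injectivity of $\phi_{j_1}$ on $Q$, read directly on $\Gamma_Q$. Once you use that, the passage to $x_0$ is superfluous.

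The fix is your fallback, which is precisely the paper's argument.
\begin{itemize}
\item From the cycle you already have $\#\phi_{j_1}(Q)=\#Q=k$, hence $\deg(w_{j_1},\Gamma_Q)=k$.
\item Each of the $k$ components of $\Gamma_Q\setminus\{w_{j_1}\}$ contains a leaf of $\Gamma_Q$, and all leaves of $\Gamma_Q$ lie in $Q$.
\item Since $\#Q=k$, each component contains exactly one point $q\in Q$. Together with $w_{j_1}$ it is therefore a path from $w_{j_1}$ to $q$.
\item So every vertex other than $w_{j_1}$ has degree $\le 2$ in $\Gamma_Q$, and every point of $Q$ is a leaf.
\item Hence $Q$ has no edges in $E_B\cup E_P$, and $\#\phi_j(Q)\le 2$ for $j\ne j_1$.
\end{itemize}
The cycle edge $(Q,\phi_{j_1}(Q))$ is then the only outgoing edge. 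Make this the proof and drop the geometric detour.
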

 \begin{proof}
  If $Q \in V_Q$, then $\#Q = k \geq 3$. Then two cases are possible:\\
  (i) For any $p \in Q$,\; $\deg(p, \Gamma_{Q}) =1$. In this case, the graph $\Gamma_{Q}$  necessarily has a vertex $v\in W\cup B$ such that $\deg(v, \Gamma_{Q}) = k \geq 3$.\\ 
  If $v\in W$, then $\#\phi_v(Q)=k$. Hence, there is an edge $e=(Q,\phi_v(Q))\in E_Q$.\\
If $v\in B$, then 
  there is an edge $e=(Q,b) \in E_B$ in the graph $G_T$.\\
  (ii) If there is a vertex $p \in Q$ such that $\deg(p, \Gamma_{Q})\ge 2$, then there is an edge $e =(Q, p) \in E_P$ on the graph $G_T$.

  If $Q\in V_Q$ lies in some cycle, then for any vertex  $Q_j$ in that cycle $\#Q =\#Q_{j} = k \geq 3$. Let $Q_{j}=\phi_{j}(Q)$, then  $\deg(w_{j}, \Gamma_{Q}) = k $ and for any $p \in Q\;$ $\deg(p, Q) = 1$, so  $w_{j}$ is a unique vertex of order $\ge 3$ in $\Gamma_{Q}$. Therefore, $Q$ has a unique outgoing edge.
\end{proof}

In the transformation graph $G_T$, there are 3 types of proper walks. They produce the addresses of the ramification points of $\hga$.\\

{\bf  1.}   $\Omega_Q$  denotes the set of  proper walks whose edges belong only to $E_Q$.

\begin{lemma} \label{inf_walks}
Each walk $\omega=(\be_1,\be_2,...)\in\Omega_Q$ is infinite and terminates in a cycle.
It defines a preperiodic address ${\bm\beta}=\beta_1\beta_2...$, where $\beta_k=\psi(\be_k)$ such that $\pi({\bm\beta})=x\in\hga$, $\Nfi({\bm\beta}) \ge 3$,
and one of the following conditions  holds:\\
1) $x\in \partial K$ and $x$ is a cut point of $\hat \gamma$;\\
2) $x\notin \partial K$ and $x$ is a ramification point of $\hat \gamma$.
\end{lemma}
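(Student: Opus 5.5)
Since every edge of a walk in $\Omega_Q$ lies in $E_Q$, the walk is proper only if it cannot be extended. By Proposition \ref{fin_cyc} every $Q\in V_Q$ has an outgoing edge. Hence a walk consisting only of $E_Q$-edges can always be extended, and a proper one must be infinite. The vertex set $V_Q$ is finite, as a family of subsets of $P$. So an infinite walk $Q_0=P,Q_1,Q_2,\dots$ revisits some vertex, and from the first repeated vertex on it lies in a cycle. By the uniqueness part of Proposition \ref{fin_cyc}, once the walk enters a cycle it has exactly one outgoing edge at each vertex, so it must follow that cycle forever. For each vertex this unique edge has a unique label: two labels $i\ne j$ would give two distinct edges in $E_Q$. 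Therefore the label sequence $\bmb=\psi(\be_1)\psi(\be_2)\dots$ is preperiodic.

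Next I will show $\Nfi(\bmb)\ge 3$. By construction $\phi_{\bmb|_n}(P)=Q_n\in V_Q$, so $\#P_{\bmb|_n}\ge 3$ for all $n$, and the limit $\Nfi(\bmb)$ is at least $3$. Put $x=\pi(\bmb)$. The set $A_x$ contains $\bmb$, so it is nonempty, and Theorem \ref{ord_b} gives $x\in\hga$.

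For the dichotomy I will apply Corollary \ref{ram_adr} to the address $\bmb$ with $\Nfi(\bmb)\ge 3$. If $x\notin\partial K$, the corollary's exceptional case (which needs $x\in\partial K$) does not occur, so $\Ord(x,\hga)\ge3$ and $x$ is a ramification point; this is case 2). If $x\in\partial K$, the corollary gives either $\Ord(x,\hga)=2$ or $\Ord(x,\hga)\ge 3$. In both cases $\Ord(x,\hga)\ge 2$, so $x$ is a cut point of $\hga$; this is case 1). Should one prefer not to lean on the corollary, the same bound follows directly from Theorem \ref{ord_b}: for $x\in\partial K$ each $\alpha\in A_x$ contributes $\Nfi(\alpha)-1\ge 2$ (case 2 of the theorem or the sum in case 3), and for $x\notin\partial K$ either $\Ord=\Nfi(\bmb)\ge3$ or the sum in case 3 contains the term $\Nfi(\bmb)-1\ge 2$ plus at least one further term $\ge1$.

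The main obstacle is the termination-in-a-cycle step. The uniqueness statement of Proposition \ref{fin_cyc} must really force the walk to stay on one cycle rather than alternate between cycles through a shared vertex. This is exactly what the uniqueness of the outgoing edge at cyclic vertices guarantees, so I would verify that the proof of Proposition \ref{fin_cyc} indeed excludes a second $E_Q$-edge at a cyclic $Q$, namely that $w_j$ is the only vertex of degree $\ge3$ in $\Gamma_Q$.
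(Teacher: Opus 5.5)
Your proof is correct and follows the paper's route. Proposition \ref{fin_cyc} and the finiteness of $V_Q$ give an infinite, eventually periodic walk; you usefully make explicit the uniqueness of outgoing edges on cycles, which the paper's ``$\be_k=\be_{k+l}$'' step uses only tacitly. Then $\#Q_n\ge 3$ gives $\Nfi(\bmb)\ge 3$, and Corollary \ref{ram_adr} gives the dichotomy.

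The only difference is how you get $x\in\hga$: you quote the clause ``$A_x=\varnothing$ iff $x\notin\hga$'' of Theorem \ref{ord_b}. The paper argues directly instead. Since $\#\phi_{\bmb|_k}(P)\ge 2$, the minimal arcs from two boundary points to $K_{\bmb|_k}$ lie in $\hga$, so the nested nonempty compacta $\hga\cap K_{\bmb|_k}$ shrink to $x$. The direct argument is preferable, because that clause of Theorem \ref{ord_b} is asserted in the paper but never actually proved there.
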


 \begin{proof}\label{path_bk}
 As graph $G_T$ is finite and by Proposition \ref{fin_cyc} at least one edge comes from each vertex $Q \in V_Q$, there exist $k, l \in \mathbb{N}$ such that $\bm{e}_k = \bm{e}_{k+l}$. It follows that infinite walk $\omega$ terminates in a cycle and for any of the  edges of $\omega$, $\psi({\bm e_j})= {{\beta}}_j$. Then $\omega$ defines a preperiodic address $\bm{\beta}={\beta}_{1}{\beta}_{2} \hdots \overline{{\beta}_{k+1} \hdots {\beta}_{k+l}}$. 

 The point $x = \pi({\bm\beta})$ belongs to  $\hat{\gamma}$ because for any finite subword ${\bm \beta}|_{l}$ of ${\bm{\beta}}$ of length $k$ $\#\phi_{\bm{\beta|_{k}}}(P)\geq 3$. There exist $p_1, p_2 \in \partial K$ such that for $j=1,2\;$
the minimal arc $(p_j, K_{\bm{\beta|_{k}}}) \in \hat{\gamma}$. Let $(p_j, K_{\bm {\beta}}) = \lim\limits_{k \to \infty} (p_j, K_{\bm {\beta|_{k}}})$. The arc $(p_j, K_{\bm {\beta}}) \in \hat{\gamma}$ and contains the point $x$ since ${\bm \beta}$ is an address of $x$. Therefore, $x \in \hat{\gamma}$. \\
  Since $\Nfi({ \bm \beta}) = k \geq 3$   by Corollary \ref{ram_adr}  a point $x \in \hat{\gamma}$ with address $\bm{\beta}$ is a boundary cut-point of $\hat{\gamma}$  or a non-boundary ramification point of $\hat{\gamma}$.
 \end{proof}

{\bf 2.} $\Omega_B$ denotes a set of proper finite paths $\omega_b$ in $E_Q$ that terminate with an edge $\be_b=(Q,b )\in E_B$. They  correspond to ramification points of $\hga$ that are images of  $b $. 

\begin{lemma}\label{path_bk}
    Let $\omega=(\be_1,...,\be_k,\be_b)$ be a path in $\Omega_B$. Let  $j_i=\psi(\be_i)$, $\bj=j_1...j_k$ and ${\bm e}_b = (Q_{i_k}, b_l)$.  
   Then  the point $x = S_\bj(b_l)$ is a ramification point of  $\hat \gamma$ and $\pi^{-1}(x) = {\bm j}\pi^{-1}(b_l)$.

\end{lemma}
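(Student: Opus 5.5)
We follow the walk $\omega$ in $G_T$. The edges $\be_1,\dots,\be_k\in E_Q$ carry the labels $j_1,\dots,j_k$ and pass through the vertices $P=Q_0,Q_1,\dots,Q_k$, where $Q_i=\phi_{j_i}(Q_{i-1})$. Hence $Q_k=\phi_\bj(P)=P_\bj$ with $\#P_\bj\ge 3$, and $\Gamma_{Q_k}=\Gamma_\bj$. The terminal edge $\be_b=(Q_k,b_l)\in E_B$ means $b_l\notin Q_k$ and $\deg(b_l,\Gamma_\bj)\ge 3$. The plan has two steps. First we show that $x=S_\bj(b_l)$ is a ramification point of $\hga$. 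Then we identify its addresses.

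For the first step we apply Proposition \ref{ordim}(4) with $\bi=\bj$ and $b=b_l$. It gives $\Ord(S_\bj(b_l),\hga)\ge \deg(b_l,\Gamma_\bj)\ge 3$. In more detail: the $\ge 3$ branches of $\Gamma_\bj$ at $b_l$ give arcs of $\gamma(P_\bj)=\phi^*_\bj(\hga)$ that meet only at $b_l$. Since $S_\bj(\phi^*_\bj(\hga))=\hga\cap K_\bj$, formula \eqref{eq:ord} transports this order to $x$ in $\hga$. This yields $\Ord(x,\hga)\ge3$, so $x$ is a ramification point. We also need $b_l\in\gamma(P_\bj)$; this holds because $b_l$ is a vertex of $\Gamma(P_\bj)$ of degree $\ge 3$, and Lemma \ref{ram_b} applied inside that subtree places it on $\gamma(P_\bj)$.

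For the second step, the inclusion $\bj\,\pi^{-1}(b_l)\subset\pi^{-1}(x)$ is immediate from $x=S_\bj(b_l)$. For the reverse inclusion, let $\alpha$ be any address of $x$. Since $b_l\notin P$ (it lies in $B\setminus P$ as a critical point of degree $\ge 3$, and $b_l\notin Q_k$), $b_l$ is not a boundary point of $K$, although it may be a critical point, i.e. lie in several first-level copies $K_i$. The key claim is that $x\notin\partial K_\bj$. We argue by contradiction. If $x\in\partial K_\bj=S_\bj(P)$, then $b_l\in P$ by injectivity of $S_\bj$, which is impossible. Hence $x$ lies in the relative interior of $K_\bj$ with respect to copies of level $|\bj|$. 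By the SIP, every copy of level $|\bj|$ that contains $x$ is then $K_\bj$ itself, since two distinct copies of the same level meet only at their boundary images. So every address of $x$ begins with $\bj$, and the tail is an address of $S_\bj^{-1}(x)=b_l$. This gives $\pi^{-1}(x)=\bj\,\pi^{-1}(b_l)$.

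The main obstacle is the interior claim in the second step: we must check that $b_l\notin P$ really follows from the definition of $E_B$, i.e. that $b_l\in V_B\subset B$ with $b_l\notin Q_k$ excludes $b_l\in P$. If $b_l$ could be a boundary point outside $P_\bj$, the argument would fail. We would handle this by observing that any $p\in P\cap\Gamma(P_\bj)$ lies in $P_\bj$, because $P_\bj$ is a full subset (being $S_\bj^{-1}(\hga\cap\partial K_\bj)$). Hence $b_l\notin Q_k$ together with $b_l\in\Gamma_\bj$ forces $b_l\in B\setminus P$, which closes the gap.
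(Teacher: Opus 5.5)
Your proposal is correct and follows the paper's proof. Proposition \ref{ordim}(4), which the paper cites as ``(5)'', gives $\Ord(S_\bj(b_l),\hga)\ge\deg(b_l,\Gamma_\bj)\ge 3$, and $b_l\notin Q_k$ gives $x\notin\partial K_\bj$, so every address of $x$ begins with $\bj$. You also fill in two steps the paper leaves implicit: the fullness of $P_\bj$, which shows $b_l\notin P$ (needed because $V_B$ may contain points of $P$), and the identity $K_{\bf i}\cap K_{\bf j}=S_{\bf i}(\partial K)\cap S_{\bf j}(\partial K)$ for distinct copies of the same level.
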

 \begin{proof}
 If  $\deg (b_l, \Gamma_{Q_{i_k}}) = l \geq 3$, by Proposition \ref{ordim}(5)\\ ${\mathop{\rm Ord} \nolimits}(S_{\bm j}(b_l), \hat \gamma)\geq {\mathop{\rm Ord} \nolimits}(b_l, \phi^*_{\bm j}(\hat \gamma)) \geq l \geq 3$. Then the point $x = S_{\bm j}(b_l)$ is a ramification point of  $\hat \gamma$. Seeing that $b_l \notin Q_{i_k}$ we get  $x = S_{\bm j}(b_l) \notin \partial K_{{\bm j}}$. Сonsequently, $\pi^{-1}(x) = {\bm j}\pi^{-1}(b_l)$.
 \end{proof}

{\bf 3.} $\Omega_P$ denotes a set of proper finite paths $\omega_b$ in $E_Q$ that terminate with an edge $\be_p=(Q,p )\in E_P$. They  correspond to the boundary cut points or ramification points of $\hat{\gamma}$ that belong to the critical set.

\begin{lemma}\label{path_pk}
    Let $\omega=(\be_1,...,\be_k,\be_p)$ be a path in $\Omega_P$. Let  $j_i=\psi(\be_i)$, $\bj=j_1...j_k$ and ${\bm e}_p = (Q_{i_k}, p_l)$.  
   Let $x = S_\bj(p_l)$. Then\\
   1) If $x\in \partial K$ then $x$ is a cut point of $\hat \gamma$.\\
2) If $x\notin \partial K$ then $x$ is a ramification point of $\hat \gamma$.

\end{lemma}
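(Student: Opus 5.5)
We argue as in Lemma \ref{path_bk}, now with the terminal vertex a boundary point $p_l$ instead of a critical black vertex $b_l$. Write $Q=Q_{i_k}=\phi_\bj(P)$. By the definition of $E_P$ we have $p_l\in Q$ and $\deg(p_l,\Gamma_Q)\ge 2$. The first step is to turn this degree condition into an order estimate inside the subtree $\hga_\bj=\phi^*_\bj(\hga)=\gamma(Q)$. Since $\deg(p_l,\Gamma_Q)\ge2$ and $\Gamma_Q$ is the minimal subtree of $\Gamma$ spanning $Q$, the components of $\Gamma_Q\setminus\{p_l\}$ each contain a point of $Q\setminus\{p_l\}$. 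So we can choose $q_1,q_2\in Q$ lying in different branches at $p_l$. The argument of Proposition \ref{ordim}(3),(4) then shows that the arcs $\gamma(q_1,p_l)$ and $\gamma(q_2,p_l)$ in $\gamma(Q)$ meet only at $p_l$. Hence $\Ord(p_l,\gamma(Q))\ge2$, and by \eqref{eq:ord}
\[
\Ord(x,\hga)\ \ge\ \Ord(x,K_\bj\cap\hga)=\Ord(p_l,\phi^*_\bj(\hga))\ \ge 2 .
\]
This already proves 1): $x\in\hga$ and has order at least $2$ in $\hga$, so $x$ is a cut point of $\hga$.

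For 2), suppose $x\notin\partial K$. We need one more branch of $\hga$ at $x$, and it must lie outside $K_\bj$. Since $p_l\in Q=\phi_\bj(P)$, the point $x=S_\bj(p_l)$ belongs to $S_\bj(P_\bj)=\hga\cap\partial K_\bj$. So $x$ is the endpoint in $K_\bj$ of some minimal arc $\gamma(p,K_\bj)$ with $p\in P$ (Proposition \ref{fybj}). Because $x\notin\partial K$, we have $p\neq x$, so this arc is nondegenerate. Its interior lies in $\hga\setminus K_\bj$, and it meets $K_\bj$ only at $x$.

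The branches of $\hga$ at $x$ inside $K_\bj$ (at least two, from the first step) are disjoint from this outer arc apart from $x$. Therefore $\Ord(x,\hga)\ge 2+1=3$, so $x$ is a ramification point. This is the same device used at the end of the proof of Proposition \ref{ram_p}.

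The main obstacle is the first step: justifying that $\deg(p_l,\Gamma_Q)\ge2$ yields two arcs of $\gamma(Q)$ meeting only at $p_l$. This needs the correspondence between $\Gamma_Q$ and $\gamma(Q)$ via the sprout, specifically that distinct branches of $\Gamma$ at a vertex correspond to distinct components of $\hga$ at the corresponding point. I would take this from the argument of Proposition \ref{ordim}(3). There a branch of $\Gamma_Q$ at $p_l$ through a white vertex $w_i$ corresponds to an arc leaving $p_l$ inside $K_i$. Different white vertices give different copies $K_i$, which by SIP meet only at $p_l$. So the arcs are disjoint apart from $p_l$, and regularity of $\Gamma$ guarantees that each branch reaches a point of $Q$.
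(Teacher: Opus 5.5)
Your proof is correct and follows essentially the paper's argument. From $\deg(p_l,\Gamma_Q)\ge 2$ you obtain at least two branches of $\hga\cap K_\bj=S_\bj(\gamma(Q))$ at $x$, which gives $\Ord(x,\hga)\ge 2$; when $x\notin\partial K$, one further branch of $\hga$ outside $K_\bj$ raises this to $\Ord(x,\hga)\ge 3$. The only difference is that you construct the outer branch explicitly, as the minimal arc $\gamma(p,K_\bj)$ with $\phi_\bj(p)=p_l$ (Proposition \ref{fybj}), whereas the paper only asserts that $x$ has positive order in $\hga\cap K_{\bj^- i_n}$ for a neighbouring copy, so your version is the more carefully justified one.
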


 \begin{proof}
 1) If  $\deg (p_l, \Gamma_{Q_{i_n}}) = l \geq 2$, there will be $l$ arcs $\gamma_1, \dots \gamma_l$. Each $\gamma_i \subset \hat{\gamma}$ connects $x$ and $y_i \in \partial K_{{\bj}}$. Therefore ${\mathop{\rm Ord} \nolimits}(x, \hat \gamma) \geq l \geq 2$.

 2)  If  $\deg (p_l, \Gamma_{Q_{i_n}}) = l \geq 2$, there will be $l$ arcs $\gamma_1, \dots \gamma_l$. Each $\gamma_i \subset \hat{\gamma}$ connects $x$ and $y_i \in \partial K_{\bj}$. Since $x= S_{\bj}(p_l)$,  $x\in \partial K_{\bj}$. It means that $x\notin \partial K$ and $x\in K_{\bj^{-} i_n}$, where $\bj^{-} = j_1\hdots j_{n-1}$ and ${\mathop{\rm Ord} \nolimits}(x, K_{\bj^{-} i_n} \cap \hat{\gamma}) > 0$. Therefore ${\mathop{\rm Ord} \nolimits}(x, \hat \gamma) \geq l+1 \geq 3$.

 \end{proof}

\begin{lemma}
       In the transformation graph $G_T$ \; $\#(\Omega_Q \cup \Omega_B \cup \Omega_P) \le M  -2$, where $M = \#P$.
\end{lemma}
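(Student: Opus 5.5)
We plan to bound the number of proper walks by a leaf-counting argument on finite trees. The basic fact we use is this: a finite tree with $M$ leaves has
\[
\sum_{v}(\deg v-2)^+ \;\le\; M-2 ,
\]
so in particular it has at most $M-2$ branching events. Our aim is to assign to each proper walk in $\Omega_Q\cup\Omega_B\cup\Omega_P$ a distinct ``branching contribution'' to a tree whose leaves lie in $P$. These contributions should add up without double counting.

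\textbf{Step 1: the walks realise distinct points of $\hga$.} By Lemmas \ref{inf_walks}, \ref{path_bk} and \ref{path_pk}, every proper walk $\omega$ determines a point $x_\omega\in\hga$. This point is one of two kinds:
\begin{itemize}
\item a ramification point of $\hga$ not lying in $\partial K$, or
\item a point of $\partial K$ that is a cut point of $\hga$.
\end{itemize}
Each such point adds at least $1$ to $\sum_x(\Ord(x,\hga)-2)^+$, or, for a boundary cut point, at least $1$ to the analogous count in which boundary points are treated as internal vertices. Since $\hga$ is a finite tree whose endpoints lie in $P$, we will apply the inequality to the tree $\hga$ with its endpoints together with the boundary cut points. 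We would then show that the number of such ``excess'' units is at most $M-2$. To do this, we double the tree at boundary cut points, i.e. regard a boundary point of order $\ge 2$ as a branching vertex with an extra pendant leaf. This costs one leaf from $P$ and adds one branching unit, so the total bound $M-2$ is preserved.

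\textbf{Step 2: injectivity of $\omega\mapsto x_\omega$.} This is the main obstacle, since a single point may a priori be reached by several walks. We handle it by cases.
\begin{itemize}
\item For two walks in $\Omega_Q$: each defines an address $\bm\beta$ with $\Nfi(\bm\beta)\ge3$, and distinct walks give distinct addresses, because outgoing labels from a vertex $Q$ are determined by $\phi_i(Q)$. If two addresses give the same point $x$, then Theorem \ref{ord_b}(3) says $\Ord(x,\hga)$ is the sum $\sum_{\alpha\in A_x}(\Nfi(\alpha)-1)$. In that case the point $x$ supplies at least as many excess units as there are walks reaching it, because each address contributes at least $2$ to the sum.
\item For walks ending in $E_B$ or $E_P$: we use the addresses $\bj\,\pi^{-1}(b)$ (Lemma \ref{path_bk}) and the analogous ones for $p$. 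These are separated from one another by their finite prefix $\bj$, and by Theorem \ref{ord_b} again each contributes its own $\Nfi-1\ge 2$ summand at the common point.
\end{itemize}
Thus we will show
\[
\#\{\text{walks reaching }x\}\;\le\;(\Ord(x,\hga)-2)^+ \quad\text{or}\quad \#\{\text{walks reaching }x\}\;\le\;\Ord(x,\hga)-1 ,
\]
the second form applying at boundary points.

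\textbf{Step 3: summation.} Summing over $x$ and using the tree inequality for $\hga$ with at most $M$ terminal points, we get
\[
\#(\Omega_Q\cup\Omega_B\cup\Omega_P)\;\le\; M-2 .
\]
The delicate part is the bookkeeping at boundary points in Step 2, where a boundary point of order $2$ in $\hga$ has zero excess. There we would first try to absorb its unit by counting that boundary point as an internal vertex: this removes one leaf and adds one degree, which leaves the bound $M-2$ unchanged.
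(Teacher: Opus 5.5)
\paragraph{The gap is in Step 2.} Your Steps 1 and 3 are sound. Weighting $x\notin\partial K$ by $(\Ord(x,\hga)-2)^+$ and a boundary cut point by $\Ord(x,\hga)-1$ gives exactly $M-2$ in total. For $\Omega_Q$ alone, Step 2 also works: distinct walks from $P$ have distinct label sequences, hence distinct addresses with $\Nfi\ge 3$, and Theorem \ref{ord_b} then gives the bound. For $\Omega_B$ and $\Omega_P$, however, Step 2 fails, for two reasons.

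First, such a walk does not come with an address of $\Nfi\ge 3$. Take $x=S_{\mathbf j}(b)$ with $Q=\phi_{\mathbf j}(P)$. Every address ${\mathbf j}i\alpha$ with $w_i$ a neighbour of $b$ in $\Gamma_Q$ has $\Nfi\le\#\phi_i(Q)=\deg(w_i,\Gamma_Q)$, and this can equal $2$ for all $i$. Consider a sprout in which one critical point $b$ is joined to $w_1,w_2,w_3$ and each $w_i$ to a single $p_i$. The only proper walk is the edge $(P,b)$, while every address of $b$ has $\Nfi\le 2$ and contributes at most $1$. So the claim that each such walk ``contributes its own $\Nfi-1\ge 2$ summand'' is false.

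Second, walks of different types can end at the same point. Distinct prefixes give distinct walks, not distinct endpoints. Suppose a neighbour $w_i$ of $b$ has $\deg(w_i,\Gamma_Q)\ge 3$, and the label $q$ of the edge $(b,w_i)$ lies in $V_P$ with $\deg(q,\Gamma_{\phi_i(Q)})\ge 2$. Then $({\mathbf j},b)\in\Omega_B$ and $({\mathbf j}i,q)\in\Omega_P$ both end at $S_{\mathbf j}(b)=S_{{\mathbf j}i}(q)$, and $\Omega_Q$-walks through ${\mathbf j}i$ may converge there as well. To bound the number of walks at such a point by $\Ord(x,\hga)-2$, you would have to split $\Ord(x,\hga)$ over the copies $K_{{\mathbf j}i}$ meeting at $x$. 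You would then have to prove, inductively down nested copies, that the walks passing through $K_{{\mathbf j}i}$ and ending at $x$ number at most the local order of $x$ there minus one. Your proposal contains nothing of this kind.

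\paragraph{A direct local proof.} That inductive statement is essentially the lemma in local form, and it is much easier to prove directly on the sprout. This is the idea behind the paper's argument: a subtree $\Gamma_i$ with $k$ boundary leaves has at most $k-2$ branchings. For $Q\in V_Q$, all leaves of $\Gamma_Q$ lie in $Q$, so the tree identity gives $\sum_{v\notin Q}(\deg(v,\Gamma_Q)-2)+\sum_{p\in Q}(\deg(p,\Gamma_Q)-1)=\#Q-2$ with nonnegative summands.

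Let $f(Q)$ be the number of non-extendable walks from $Q$, and let $E_B(Q),E_P(Q)$ be the terminal edges leaving $Q$. Then $f(Q)=\sum_{\#\phi_i(Q)\ge 3}f(\phi_i(Q))+\#E_B(Q)+\#E_P(Q)$. Since $\#\phi_i(Q)=\deg(w_i,\Gamma_Q)$ (Proposition \ref{ordim}), the inductive hypothesis gives $f(\phi_i(Q))\le\deg(w_i,\Gamma_Q)-2$. Each $E_B$-edge costs $1\le\deg(b,\Gamma_Q)-2$, and each $E_P$-edge costs $1\le\deg(p,\Gamma_Q)-1$. Hence $f(Q)\le\#Q-2$. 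The base case is a cycle, where $f(Q)=1\le\#Q-2$ by Proposition \ref{fin_cyc}; off cycles the recursion is well founded. Taking $Q=P$ gives the lemma with no appeal to Theorem \ref{ord_b}, to the geometry of $\hga$, or to any injectivity question.
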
\label{fin_walks}
\begin{proof}
    By \ref{fin_cyc}, any two cycles $\sigma_1, \sigma_2$ in $G_T$ are independent. From  Proposition \ref{numwalks} it follows that the set $ \Omega_Q$ is finite. Since the graph $G_T$ is finite, the number of paths in this graph is finite. Therefore, $\Omega_B$ and $\Omega_P$ are also finite. If for some $w_i \in W$ $\#\phi_i(P) = k\geq 3$ then graph $\Gamma_i$  may contain at most $k-2$ white vertices with a degree greater than or equal to 3.
\end{proof}

Let $I_{\partial K}$ be the set of addresses of the boundary points and $I_{C}$ be the  set of addresses of the critical  points. If for some address $\alpha =\beta \alpha_i,\; \alpha_i \in I_{C}, \beta \in I^* $, $\Nfi(\alpha) \geq 3$ then $x= \pi(\alpha)$ is a point of $\hat{\gamma}$ with multiple addresses.

 If  $ \alpha \in I_{\partial K} $ and $\Nfi(\alpha) \geq 3$ then $x= \pi(\alpha)$ is a boundary cut point of $\hat{\gamma}$.\\

 If the address $\alpha$ does not correspond to a boundary point or a point with multiple addresses, then $x = \pi(\alpha)$ is a point with a single address $\alpha$.

\begin{figure}[htp]\label{ram_tree}
\centering
\includegraphics[width= \textwidth]{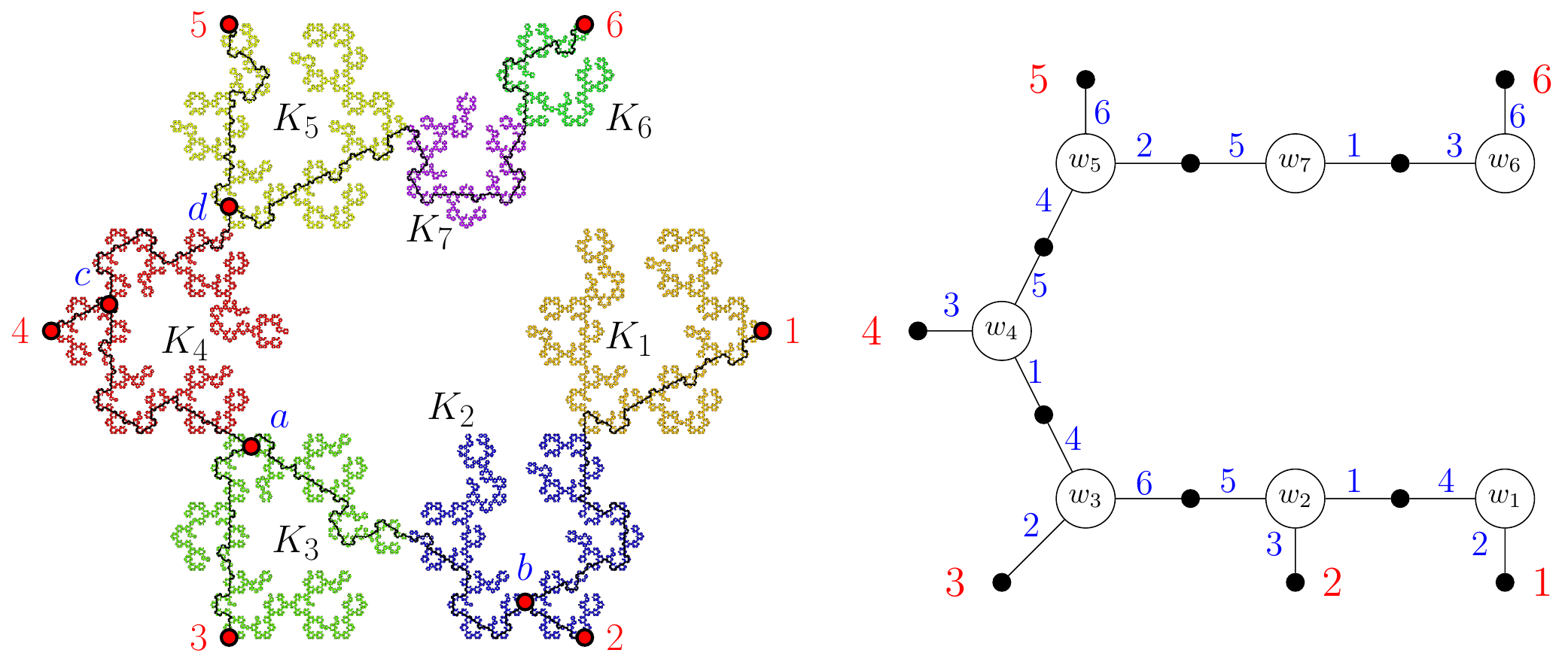}
\caption{ An attractor $K$ and its main tree $\hat \gamma$ on the left. Its $P$-sprout on the right. The points $a, b, c, d $ are the ramification points of the main tree. $\pi^{-1}(a) = \overline{34},\; \pi^{-1}(b) = 2\overline{34},\;   \pi^{-1}(c) = \overline{43}, \;\pi^{-1}(d) = 5\overline{43}$.}
\label{fig:ram_tree}
\end{figure}

\begin{figure}[htp]\label{fig:gt}
\centering
\includegraphics[width=.8 \textwidth]{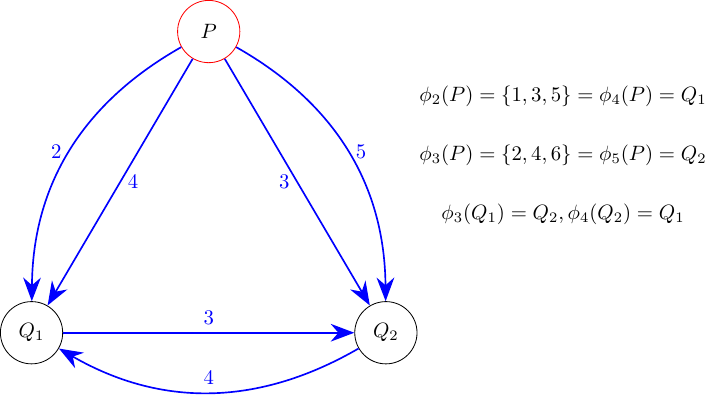}
\caption{The transformation graph for Figure \ref{fig:ram_tree}}

\end{figure}

\begin{theorem}\label{radr}
    Let $G_T$ be the transformation graph of the $P$-sprout $\Gamma$. Then, for each ramification point $x$ of the main tree $\hat{\gamma}$, which is not contained in $B\cup P$, at least one of the following conditions holds:

1) There is a walk $\omega \in \Omega_Q$ that defines a preperiodic address ${\bm\beta}=\beta_1\beta_2...$,  such that $\pi({\bm\beta})=x$. This walk consists of some path $(\be_1,...,\be_k)\in \Omega_Q$ and a cycle
$(\be_k,...,\be_n)\in \Omega_Q$.

2) There is a path $\omega=(\be_1,...,\be_k,\be_b)\in \Omega_B$ with an end point $b$, which defines a multiindex $\bj=j_1...j_k$, where  $j_i=\psi(\be_i)$, such that  
    $x = S_{\bj}(b)$ .
    
3) There is a path $\omega=(\be_1,...,\be_k,\be_p)\in \Omega_p$ with an endpoint $p\in P$, that defines a multiindex $\bj=j_1...j_k$, where  $j_i=\psi(\be_i)$, such that  
    $x = S_\bj(p)$.

If $x$ is a ramification point of $\hat{\gamma}$ with a unique address, then only  1) is valid.
If $x$ is a ramification point with several addresses, then any of  statements 1) -- 3) may be valid.
\end{theorem}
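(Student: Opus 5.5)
Take a ramification point $x$ of $\hga$ with $x\notin B\cup P$. The plan is to follow one address of $x$ along $G_T$ as long as it stays inside $V_Q$, and then see where it stops.

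\emph{Choice of address.} By Theorem \ref{ord_b}, $A_x\neq\varnothing$, and ${\rm Ord}(x,\hga)$ equals $\Nfi(\alpha)$ (single address with $\Nfi>1$) or $\sum_{\alpha\in A_x}(\Nfi(\alpha)-1)$. Fix $\alpha=j_1j_2\ldots\in A_x$ with $\Nfi(\alpha)$ maximal. Put $Q_n=\phi_{\alpha|_n}(P)=P_{\alpha|_n}$. Then $Q_0=P$, $Q_{n}=\phi_{j_{n}}(Q_{n-1})$, and the sequence $\#Q_n$ is nonincreasing.

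\emph{Case 1: $\#Q_n\ge3$ for all $n$.} Every $Q_n$ lies in $V_Q$, and each step $(Q_{n-1},Q_n)$ is an edge of $E_Q$ labelled $j_n$. So $\alpha$ is read off an infinite walk in $E_Q$ starting at $P$. I must check that this walk is proper. By Proposition \ref{fin_cyc}, a vertex on a cycle has a unique outgoing edge, so once the walk enters a cycle it never branches to $E_B\cup E_P$. Before the cycle, the walk lies in $\Omega_Q$ exactly when it is not forced to stop at a terminal edge. Lemma \ref{inf_walks} then shows that $\alpha$ is preperiodic and $\pi(\alpha)=x$, giving conclusion 1).

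\emph{Case 2: $\#Q_n\ge3$ for $n<N$ and $\#Q_N\le2$.} Set $\bj=\alpha|_{N-1}$ and $Q=Q_{N-1}$. The copy $K_\bj\cap\hga=S_\bj(\hga_Q)$ contains $x$. Following Proposition \ref{ram_p}, $x$ must be a branching point of $S_\bj(\hga_Q)$ that $S_{j_N}$ does not absorb. So $S_\bj^{-1}(x)$ is a vertex of $\Gamma_Q$ of degree $\ge3$ which is not a white vertex of degree $\ge3$ (otherwise $\#Q_N\ge3$). Hence $S_\bj^{-1}(x)$ is either some $b\in B\setminus Q$ with $\deg(b,\Gamma_Q)\ge3$, giving an edge of $E_B$, or some $p\in Q$ with $\deg(p,\Gamma_Q)\ge2$, giving an edge of $E_P$. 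Proposition \ref{ordim}(4) and Lemmas \ref{path_bk}, \ref{path_pk} confirm that these produce $x=S_\bj(b)$ or $x=S_\bj(p)$, i.e.\ conclusions 2) or 3).

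\emph{Final assertion.} If $\pi^{-1}(x)=\{\alpha\}$, then $x\notin G_{\eS}(C)$ by (\ref{uadr}). By Proposition \ref{ord_cut}(4) and Theorem \ref{ord_b}(1), $\#Q_n={\rm Ord}(x,\hga)\ge3$ for all large $n$. Since $\#Q_n$ is nonincreasing, $\#Q_n\ge3$ for every $n$, so Case 2 cannot occur. Moreover $x=S_\bj(b)$ with $b\in B$ or $x=S_\bj(p)$ with $p\in P$ would force $x\in G_{\eS}(C)\cup G_{\eS}(\partial K)$, contradicting uniqueness of the address. The examples in the figures show that all three options occur for points with several addresses.

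\emph{Main obstacle.} The hardest step is Case 2: proving that the transition to $\#Q_N\le2$ happens exactly at a vertex of $\Gamma_Q$ corresponding to $x$, and not at some other branch point of $S_\bj(\hga_Q)$. I would try first to pick $N$ minimal with $x$ lying in the interior of $K_\bj$ relative to all finer branchings, and then use Proposition \ref{fybj} to identify $S_\bj^{-1}(x)$ with the unique black vertex separating the components of $\Gamma_Q$.
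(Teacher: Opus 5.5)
Your route is the paper's own: you trace an address of $x$ through $V_Q$ and read off a trichotomy in $\Ga_Q$ where the walk stops. Case~1 and the final assertion about points with a unique address are fine.

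\textbf{The gap.} It is exactly the step you flag in Case~2, and as stated that step does not follow. In Case~2 every $\alpha'\in A_x$ has $\Nfi(\alpha')=2$. So by Theorem~\ref{ord_b}, $\#A_x=\Ord(x,\hga)\ge 3$, and each address accounts for a single arm of $\hga$ at $x$. Nothing forces the copy $K_\bj$, $\bj=\alpha|_{N-1}$, to contain more than the one arm belonging to $\alpha$. Suppose $K_\bj\cap\hga$ is a tree with $\ge 3$ endpoints, one of which is $x$. Then $S_\bj^{-1}(x)$ is an endpoint of $\hga_Q$, possibly of degree $1$ in $\Ga_Q$. The branching that $\#Q\ge 3$ guarantees then happens at another point of $K_\bj$. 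Proposition~\ref{ram_p} only yields \emph{some} branch point (or boundary cut point) in $K_\bj\cap\hga$, not $x$.

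There is a second, independent problem. Even when $S_\bj^{-1}(x)=b\in B\setminus Q$ does branch in $\hga_Q$, Proposition~\ref{ordim} gives only $\Ord(b,\hga_Q)\ge\deg(b,\Ga_Q)$, not the converse. The order exceeds the degree whenever a label $\varphi(b,w_k)$ is itself a cut point of $\hga_{\phi_k(Q)}$. In that case $b$ may have degree $2$ in $\Ga_Q$ (even in $\Ga$, so that $b\notin V_B$), and no edge of $E_B\cup E_P$ leaving $Q$ produces $x$. Also, a point of $K$ is never a white vertex; what you would need to exclude is branching of $\hga_Q$ inside a copy $K_w$. The paper's proof simply asserts $x=S_\bj(b_k)$ at this point, so it does not supply the missing argument either.

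\textbf{A possible fix, in outline.} Change the copy and add a descent.
\begin{itemize}
\item Let $\bi$ be the longest common prefix of all addresses of $x$. Since $x\notin B$, $\bi$ is non-empty, $c=S_\bi^{-1}(x)\in{\mathcal C}$, and $x$ lies in no other copy of level $|\bi|$. Hence $\Ord(x,\hga)=\sum_k \Ord(\varphi(c,w_k),\hga_{Q_{\bi k}})$, summed over the copies $K_k\ni c$.
\item If $c\notin Q_\bi$ and $\deg(c,\Ga_{Q_\bi})\ge 3$, you get 2). If $c\in Q_\bi$ and $\deg(c,\Ga_{Q_\bi})\ge 2$, you get 3).
\item Otherwise some label $q=\varphi(c,w_k)$ is a cut point of $\hga_{Q'}$ with $Q'=Q_{\bi k}$, so $\#Q'\ge 3$, and you must follow $q$ along the index diagram. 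At each stage either $\deg(q,\Ga_{Q'})\ge 2$, which gives an $E_P$-edge and 3), or $q$ has a single white neighbour $w_l$ in $\Ga_{Q'}$. In the latter case $\Ord(q,\hga_{Q'})=\Ord(\varphi(q,w_l),\hga_{\phi_l(Q')})$, so the cut-point property passes down and forces $\#\phi_l(Q')\ge 3$.
\item If this never stops, the resulting infinite walk in $E_Q$ is an address of $x$, which gives 1).
\end{itemize}
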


\begin{proof}  Let $x$ be a ramification point of $\hat\gamma$ and let $\bmb={\beta_1\beta_2....}$ be its address.

Since $x$ belongs to a unique copy $K_{\beta_1}$ of order 1,   there is a copy $K_{\bf j} \ni x$, where ${\bf j} ={\beta_1\hdots \beta_n}$, for which all vertices $Q_i$ on the path 
     $P \overset{\beta_1}{\to} Q_{1} \overset{\beta_2}{\to} Q_{2}  \dots  \overset{\beta_n}{\to} Q_{n}$ in $G_T$    are different. The graph $\Gamma_{Q_n}$ satisfies one of the three following conditions.\\

     1)  There is a vertex $w_{\beta_{n+1}} \in W$ such that $\deg(w_{\beta_{n+1}},\Gamma_{Q_n})\ge 3$ and $x \in K_{{\bf j}\beta_{n+1}}$. Then there is an edge $Q_n\overset{\beta_{n+1}}{\to}Q_{{n+1}}$.
     
      If $Q_{n+1}$ is different from all $Q_i,i=1,...,n$, we repeat the induction step for the copy $K_{\bj'}\ni x$ and $\bj'=\beta_1...\beta_n \beta_{n+1}$.

     If $Q_{n+1}=Q_i$ for some $i\le n$, then the vertices $Q_i,...,Q_n$ form a cycle.  By proposition \ref{fin_cyc}, each of the vertices $Q_k$ of this cycle has a unique outgoing edge in $G_T$, which is defined by a unique $\beta_{k+1}$.    Therefore,
     $Q_{n+1+k}=Q_{i+k}$  and  $\beta_{n+1+k}=\beta_ {i+k}$ for any $k \in\mathbb{N}$.

     Consequently, by the finiteness of the set $V_Q$, each infinite walk in $\Omega_Q$ is preperiodic, which proves 1).
    
    2) There is a vertex $b_k \in B$, $\deg(b_k, \Gamma_{Q_{n}}) \ge 3$. Then there is an edge $Q_{n} \to b_k$ in the transformation graph $G_T$.
By  lemma \ref{path_bk}, there is a ramification point $x = S_{{\bf j}}(b_k)$ such that ${\bf j}$ is the initial word of all addresses of $x$. Therefore, $\pi^{-1}(x) = {\bf j}\pi^{-1}(b_k)$, which proves 2). \\

 3) There is a vertex $p_k \in Q_{n}$, $\deg(p_k, \Gamma_{Q_{n}}) \ge 2$. Then there is an edge $Q_{n} \to p_k$ in the transformation graph $G_T$. By  lemma \ref{path_pk}, there is a ramification point $x = S_{{\bf j}}(p_k)$ such that $\pi^{-1}(x) \supseteq {\bf j}\pi^{-1}(b_k)$.\\

If for a point $x$ condition 2 or 3  is satisfied, $x$ is a point of the critical set or its image and therefore has more than a  unique address. Сondition 1 is necessarily fulfilled, since for the unique address $\alpha$ of $x\;$, $N_{\phi}(\alpha) \geq 3$ and therefore there exists a walk $\omega(P,\dots) \in \Omega_Q$.

\end{proof}

As follows from the above, to find the addresses of all ramification points of  $\hat\gamma$, it is necessary to find all the cycles in the transformation graph $G_T$.
 By proposition \ref{fin_cyc}, all these cycles are independent. Since each of them produces a ramification  point or a boundary cut point, their number is  not greater than $\#P-2$. For each of these cycles one should find the set of all paths in $E_Q$ connecting  $P$ and this cycle.  Thus, we obtain the set $\Omega_Q$.
To find the sets   $\Omega_B$ and $\Omega_P$, 
one should find the set of all the paths in $G_T$ 
that connect $P$ and $b \in V_B$ or $p \in V_P$, respectively.

\section*{Acknowledgments}
The authors especially thank Dmitry Mekhontsev, whose package IFSTile \cite{IFStile} was the main  tool  for the construction  of self-similar and self-affine sets shown in the paper.\\
The study was carried out under the state contract of the Sobolev Institute of Mathematics (project FWNF-2026-0026).

\noindent
Andrei Tetenov \\
Sobolev Institute of Mathematics, \\
Novosibirsk, Russia. \\
e-mail: a.tetenov@gmail.com
\bigskip
\noindent

Ivan Yudin \\
Sobolev Institute of Mathematics, \\
Novosibirsk, Russia. \\
e-mail: uivan566@gmail.com
\bigskip
\noindent

Dmitrii Drozdov \\
Sobolev Institute of Mathematics, \\
Novosibirsk, Russia. \\
e-mail: d.drozdov1@g.nsu.ru
\bigskip
\end{document}